\documentclass{amsart}
\usepackage{amssymb}
\setlength{\textwidth}{5.6 in}
\setlength{\textheight}{8.5 in}
\newtheorem{theorem}{Theorem}[section]
\newtheorem{definition}[theorem]{Definition}
\newtheorem{corollary}[theorem]{Corollary}
\newtheorem{lemma}[theorem]{Lemma}
\newtheorem{proposition}[theorem]{Proposition}
\theoremstyle{definition}
\theoremstyle{remark}
\newtheorem{rem}[theorem]{Remark}
\numberwithin{equation}{section}
\newcommand{\ty}{\mathbf t}

\newcommand{\Qp}{{\mathbb Q}_p}
\newcommand{\Q}{\mathbb Q}
\newcommand{\N}{\mathbb N}
\newcommand{\F}{\mathbb F}
\newcommand{\R}{\mathbb R}

\newcommand{\HH}{\mathcal H}
\newcommand{\Fp}{{\mathbb F}_p}
\newcommand{\Z}{\mathbb Z}
\newcommand{\ZK}{{\mathbb Z}_K}
\newcommand{\p}{\mathfrak p}
\renewcommand{\l}{\mathfrak l}
\newcommand{\q}{\mathfrak q}
\newcommand{\st}[1]{\vskip 1mm\noindent{\bf #1}}
\newcommand{\stst}[1]{\vskip 1mm\noindent\hskip4mm{\bf #1}}
\newcommand{\ststst}[1]{\vskip 1mm\noindent\hskip8mm{\bf #1}}
\def\cpl{\operatorname{compl}}
\def\disc{\operatorname{disc}}
\def\md#1{\ \mbox{\rm(mod }{#1})}
\def\gb#1{\overline{\gamma_{#1}(\t)}}
\def\ind{\operatorname{ind}}
\def\j{\mathbf{j}}
\def\l{\mathfrak{l}}
\def\la{\lambda}
\def\n{\mathbf{n}}

\def\opt{\operatorname{opt}}
\def\om{\omega}
\def\ord{\operatorname{ord}}

\def\t{\theta}
\def\T{\mathbf{T}}
\def\tq{\,\,|\,\,}
\def\zpx{\Z_p[x]}
\begin{document}
\title[Higher Newton polygons, discriminants and prime ideal decomposition]{Higher Newton polygons in the computation of discriminants and prime ideal decomposition \\in number fields}
\author[Gu\`ardia]{Jordi Gu\`ardia}
\address{Departament de Matem\`atica Aplicada IV, Escola Polit\`ecnica Superior d'Enginyera de Vilanova i la Geltr\'u, Av. V\'\i ctor Balaguer s/n. E-08800 Vilanova i la Geltr\'u, Catalonia}
\email{guardia@ma4.upc.edu}

\author[Montes]{\hbox{Jes\'us Montes}}
\address{Departament de Ci\`encies Econ\`omiques i Socials,
Facultat de Ci\`encies Socials,
Universitat Abat Oliba CEU,
Bellesguard 30, E-08022 Barcelona, Catalonia, Spain}
\email{montes3@uao.es}

\author[Nart]{\hbox{Enric Nart}}
\address{Departament de Matem\`{a}tiques,
         Universitat Aut\`{o}noma de Barcelona,
         Edifici C, E-08193 Bellaterra, Barcelona, Catalonia}
\email{nart@mat.uab.cat}
\thanks{Partially supported by MTM2006-15038-C02-02 and MTM2006-11391 from the Spanish MEC}
\date{}
\keywords{number field, Newton polygon, prime ideal decomposition, discriminant, index}

\subjclass[2000]{Primary 11R04; Secondary 11R29, 11Y40}

\maketitle

\begin{abstract}
We present an algorithm for computing discriminants and prime ideal decomposition in number fields. The algorithm is a refinement of a $p$-adic factorization method based on Newton polygons of higher order. The running-time
and memory requirements of the algorithm appear to be very good: for a given prime number $p$, it computes the $p$-valuation of the discriminant and the factorization of $p$ in a number field of degree $1000$ in a few seconds, in a personal computer.
\end{abstract}


\section{Introduction}

The factorization of prime numbers in number fields is a classical problem, whose resolution lays at the foundation of algebraic number theory. Although it is completely understood from the theoretical point of view, the rising of computational number theory in the last decades has renewed the interest on the problem from a practical perspective. In his comprehensive book \cite[p. 214]{Cohen}, H. Cohen  refers to this problem as one of the main computational tasks in algebraic number theory.

The most common insight in the known solutions of the problem is based on the solution on a more general problem: the determination of a (local) integral basis. There is a number of highly efficient methods for this problem, due to H. Zassenhaus and M. Pohst \cite{PZ}, D. Ford and P. Letard \cite{Ford}, and D. Ford, S. Pauli and X. Roblot \cite{FPR}.

The theory of higher order Newton polygons developed in \cite{montes} and revised in \cite{GMN}, HN standing for ``higher Newton", has revealed itself as a powerful tool for the analysis of the decomposition of a prime $p$ in a number field. Higher Newton polygons are a $p$-adic tool, and their computation involves no extension of the ground field, but only extensions of the residue field; thus, they constitute an excellent tool for a computational treatment of the problem. In this paper we explain how
the theoretical results of \cite{GMN} apply  to  yield an algorithm, due to the second author \cite[Ch.3]{montes}, to factor a prime number $p$ in a number field $K$, in terms of a generating equation $f(x)$. The algorithm computes the $p$-valuation of the index of $f(x)$ as well; in particular, it determines the discriminant of the number field, once one is able to factorize the discriminant $\disc(f)$ of the defining equation.

In many applications, the computation of an integral basis is very useful because it helps to carry out other tasks in the number field. However, if one is interested only in the discriminant or in the factorization of a prime, our direct method has the advantage of being more efficient and it makes possible to carry out these tasks in number fields of much higher degree. In fact, the running-time and memory requirements of the algorithm appear to be very good. Even in some bad cases, chosen to test the limit of its capabilities, it computes the factorization of $p$ in a number field of degree $1000$ and $p$-index $200000$ in a few seconds, in a personal computer. If we add the computation of generators of the prime ideals, the running-time may increase in a significant way, because this routine implies an extended gcd computation.

The outline of the paper is as follows. In section 2 we present the basic algorithm that is obtained by a direct application of the ideas of \cite{GMN}. In section 3 we introduce an optimization based on a lowering of the order in which the computations take place, and we prove a strong optimization result (Theorem \ref{optimal}). We refer to this optimization process as  \emph{refinement}, and it results in a dramatic lowering of the complexity. In section 4 we show how to compute generators of the prime ideals lying above $p$ in terms of the output of the algorithm. In section 5 we describe an implementation, and in section 6 we present the results of some numerical tests. We construct some ``worse possible" polynomials, that should be specially difficult with respect to the structure of the algorithm; this means that they have a huge index, and this index is sufficiently ``hidden" to force the algorithm to work in a high order. The  record is a polynomial of degree $6912$ and $2$-index $77673504$, for which the factorization of $2$ is obtained in $787$ seconds. The algorithm, moreover, is highly parallelizable, so that it can raise the bounds of computations on number fields to huge degrees.

The local nature of all the computations involved in the algorithm justifies its high efficiency compared to the classical insight explained above. Anyway, after this algorithm, one  can  go the other way round and apply it as a previous step in the determination of an integral basis. Numerical experimentation suggests that this new approach provides a significant improvement in the solution of this problem.

\section{Computation of discriminants and prime ideal decomposition in number fields}
We fix a number field $K=\Q(\theta)$, generated by a monic irreducible polynomial $f(x)\in\Z[x]$, such that $f(\t)=0$. We denote by  $\ZK$ the ring of integers of $K$. We fix also a prime number $p\in\Z$.
The $p$-adic valuation is denoted simply by $v$ in order to avoid confusion with $p$-adic valuations $v_r$ of higher order.
If $\F$ is a finite field and $\varphi(y),\,\psi(y)\in\F[y]$, we write $\varphi\sim\psi$ to indicate that the two polynomials coincide up to multiplication by a nonzero constant in $\F$.

In this section we present the basic algorithm that computes the $p$-value of the discriminant of $K$ and the prime ideal decomposition of $p\,\ZK$, that is obtained by a direct application of the ideas of \cite{GMN}.

\subsection{Types ands their representatives}\label{seccio-tipus}
The basic tool for the algorithm is the concept of type and its representative, which we recall here with some detail. All results of this section are taken from \cite[\S2]{GMN}.

\begin{definition}\label{definicio-tipus}
A type of order zero is a monic irreducible polynomial in $\F_p[y]$. Let $r\ge 1$ be a natural number. A {\em type of order $r$} is a sequence of data:
$$
\ty=(\phi_1(x);\lambda_1, \phi_2(x);\dots;\lambda_{r-1},  \phi_{r}(x); \lambda_{r},\psi_{r}(y)),
$$
where $\phi_1(x), \dots, \phi_r(x)\in\Z[x]$ are monic  polynomials,
 $\lambda_1,\dots,\lambda_r\in\Q^-$ are ne\-gative rational numbers, and
$\psi_r(y)\in \overline{\F}_p [y]$ is a monic polynomial, that satisfy the following pro\-perties:
\begin{enumerate}
\item $\phi_1(x)$ is irreducible modulo $p$. Let $\psi_0(y)\in\F_p[y]$ be the polynomial obtained by reduction of $\phi_1(y)$ modulo $p$. We define $\F_1:=\F_p[y]/(\psi_0(y))$.
\item For all $1\le i<r$, the Newton polygon of $i$-th order, $N_i(\phi_{i+1})$, is one-sided, with positive length and slope $\la_i$.
\item For all $1\le i<r$, the residual polynomial of $i$-th order with respect to $\la_i$, $R_i(\phi_{i+1})(y)$, is an irreducible polynomial in $\F_i[y]$. Let $\psi_i(y)\in \F_i[y]$ be the monic polynomial determined by $R_i(\phi_{i+1})(y)\sim\psi_i(y)$. We define $\F_{i+1}:=\F_i[y]/(\psi_i(y))$.
\item $\psi_r(y)\in\F_r[y]$ is a monic irreducible polynomial, $\psi_r(y)\ne y$.  We define $\F_{r+1}:=\F_r[y]/(\psi_r(y))$.
\end{enumerate}
\end{definition}

Every type carries implicitly a certain amount of extra data, whose notation we fix now.
For all $1\le i\le r$:
\begin{itemize}
\item $h_i, e_i$ are a pair of positive coprime integers such that $\lambda_i=-h_i/e_i$,
\item $\ell_i,\,\ell'_i\in\Z$ are fixed integers such that $\ \ell_i h_i-\ell'_ie_i=1$,
\item $f_i=\deg \psi_i(y)$, and $f_0=\deg \psi_0(y)=\deg \phi_1(x)$,
\item $m_i=\deg \phi_i(x)$, and $m_{r+1}=m_re_rf_r$. Note that $m_{i+1}=e_if_im_i$,
\item $z_i=y \md{\psi_i(y)}\in\F_{i+1}^*$, $z_0=y \md{\psi_0(y)}\in\F_1^*$. Thus, $\F_{i+1}=\F_i(z_i)$.
\end{itemize}

Also, for all $1\le i\le r+1$, the type carries certain $p$-adic discrete valuations $v_i:\Qp(x)^\ast\to \Z$ \cite[Def.2.5]{GMN}, and
semigroup homomorphisms,
$$\om_i\colon \zpx\setminus\{0\} \to\Z_{\ge 0},\qquad P(x)\mapsto \ord_{\psi_{i-1}}(R_{i-1}(P)),
$$ where $R_0(P)(y)\in\F_p[y]$ is the reduction modulo $p$ of $P(y)/p^{v(P)}$.
These objects play an essential role in what follows, because $\om_i(P)$ measures the length of the principal part, $N_i^-(P)$, of the Newton polygon of $i$-th order of $P(x)$ \cite[Lem.2.17]{GMN}. The principal part $N^-$ of a polygon $N$ is the polygon determined by the sides of negative slope of $N$.

To avoid confusion, in case of working simultaneously with different types, we add a superscript with the type to every component or datum: $\phi_i^\ty(x)$, $\la_i^\ty$, $e_i^\ty$, etc.

\begin{definition}
We say that $\lambda_i,\phi_{i+1}(x)$ (and their implicit data) are the $i$-th {\em level} of $\ty$.
\end{definition}

By truncation we can easily obtain types of lower order. We denote
$$
\ty_i:=(\phi_1(x);\lambda_1,\phi_2(x);\cdots;\lambda_{i-1},\phi_i(x);\lambda_i,\psi_i(y)), \quad 0\le i\le r,
$$
$$
\tilde{\ty}_i:=(\phi_1(x);\lambda_1,\phi_2(x);\cdots;\lambda_{i-1},\phi_i(x);\lambda_i,\phi_{i+1}(x)), \quad 0\le i< r.
$$
Clearly, $\ty_i$ is a type of order $i$. The \emph{extension} $\tilde{\ty}_i$ is not a type, strictly speaking.

To our polynomial $f(x)\in\Z[x]$ we can attach the set $\ty_0(f)$ of all types of order zero that divide $f(x)$ modulo $p$. By Hensel's lemma, each $\ty\in\ty_0(f)$ determines a monic $p$-adic factor $f_\ty(x)\in\Z_p[x]$ of $f(x)$, and
$$
f(x)=\prod_{\ty\in\ty_0(f)}f_\ty(x).
$$
The types of order $r$ play an analogous role and they provide similar factorizations in higher order.
Let us recall some concepts and results in this respect.

\begin{definition}\label{type} Let $\ty$, $\ty'$ be types of order $r$, and let $P(x)\in\zpx$ be a monic polynomial.
\begin{itemize}
 \item We say that $P(x)$ has type $\ty$ if $\deg P=m_{r+1}\om_{r+1}(P)>0$, or equivalently
\begin{enumerate}
 \item $P(x)\equiv \phi_1(x)^{a_0} \md{p}$, for some positive integer $a_0$, and
\item  For all $1\le i\le r$, the Newton polygon $N_i(P)$ is one-sided, of slope $\lambda_i$, and
 $R_i(P)(y)\sim\psi_i(y)^{a_i}$ in $\F_i[y]$, for some positive integer $a_i$.
\end{enumerate}
\item We say that $P(x)$ is divisible by $\ty$, or that $\ty$ divides $P(x)$, if $\om_{r+1}^\ty(P)>0$. Formally, we can think of $\om_{r+1}^\ty(P)$ as the exponent with which $\ty$ divides $P(x)$.

\noindent If $\ty$ divides $P(x)$, we denote by $P_{\ty}(x)$ the monic factor of $P(x)$ of  type $\ty$ and greatest degree. It has $\deg P_{\ty}=m_{r+1}\om_{r+1}^\ty(P)$, and $\om_{r+1}^\ty(P_\ty)=\om_{r+1}^\ty(P)$.
\item We say that $\ty$ and $\ty'$ are $P$-equivalent, if both divide $P(x)$, and $P_\ty(x)=P_{\ty'}(x)$.
\item We say that a set $\T$ of types faithfully represents $P(x)$, if $P(x)$ is divisible by all types in $\T$, and  $P(x)=\prod_{\ty\in\T}P_\ty(x)$.
\end{itemize}
\end{definition}

In \cite[\S2.3]{GMN} it is described a constructive method to enlarge a type of order $r$  into different types of order $r+1$.

\begin{theorem}\label{construeix-phi} Let $\ty$ be a type of order $r$.
We can effectively construct a monic polynomial $\phi_{r+1}(x)\in\Z[x]$ of type $\ty$ such that $\om_{r+1}(\phi_{r+1})=1$. This polynomial has minimal degree $\deg \phi_{r+1}=m_{r+1}$ among all polynomials of type $\ty$.
\end{theorem}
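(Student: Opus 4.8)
The plan is to construct $\phi_{r+1}(x)$ by a Newton-polygon lifting procedure that mimics, one level at a time, the data recorded in $\ty$. The starting point is the observation that a monic polynomial $P(x)$ of type $\ty$ must, in particular, satisfy $R_i(P)\sim\psi_i(y)^{a_i}$ at each level $i\le r$, and the requirement $\om_{r+1}(\phi_{r+1})=1$ forces all these $a_i$ to equal $1$; hence $N_i(\phi_{r+1})$ is one-sided of slope $\la_i$ of the shortest possible length and $R_i(\phi_{r+1})\sim\psi_i(y)$. Unwinding these constraints from the bottom up, one sees that such a polynomial must have degree exactly $m_{r+1}=e_rf_rm_r$, which already gives the minimality claim once existence is established. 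So the real content is the \emph{effective construction}, and the strategy is induction on $r$.

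First I would treat $r=0$ (or whatever the base case is in \cite{GMN}): here $\ty=(\psi_0)$ is a type of order zero, and $\phi_1(x)$ is simply any monic lift to $\Z[x]$ of $\psi_0(y)\in\F_p[y]$; it is irreducible mod $p$, has $\om_1(\phi_1)=1$ by definition of $\om_1$, and has minimal degree $f_0=m_1$. For the inductive step, assume $\phi_r(x)$ of type $\ty_{r-1}$ with $\om_r(\phi_r)=1$ is already available (this is exactly the $\phi_r$ built into $\ty$). I would then write down a $\phi_r$-adic expansion
$$
\phi_{r+1}(x)=\phi_r(x)^{e_rf_r}+\sum_{j=0}^{e_rf_r-1} a_j(x)\,\phi_r(x)^{j},\qquad \deg a_j<m_r,
$$
and choose the coefficients $a_j(x)$ so that: (i) the $\phi_r$-Newton polygon of order $r$ of $\phi_{r+1}$ is the single segment from $(0,e_rf_rV)$-type data down to $(e_rf_r,0)$ with slope $\la_r=-h_r/e_r$ — this pins down the $v_r$-values of the relevant $a_j$, in particular forcing $a_j=0$ unless $e_r\mid j$, with the prescribed valuation on the surviving $e_rf_r/e_r=f_r$ coefficients; and (ii) the order-$r$ residual polynomial $R_r(\phi_{r+1})(y)\in\F_r[y]$, which is a degree-$f_r$ polynomial whose coefficients are the residual coefficients $\gamma$ attached to the vertices of that segment, equals (a constant times) the prescribed $\psi_r(y)$. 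Step (ii) is a triangular linear system over $\F_r$ for the residues of the surviving $a_{e_r t}(x)/p^{\,(\text{appropriate power})}$, solvable because $\psi_r$ is monic of the correct degree $f_r$; lifting these residues arbitrarily to $\Z[x]$ (degrees $<m_r$) and setting all other $a_j=0$ produces a concrete $\phi_{r+1}(x)\in\Z[x]$.

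It then remains to check that this $\phi_{r+1}$ really has type $\ty$ and satisfies $\om_{r+1}(\phi_{r+1})=1$. Type at levels $1,\dots,r-1$ is inherited from $\phi_r$ via the theory of \cite{GMN} relating the order-$i$ data of a $\phi_r$-expansion to that of $\phi_r$ itself (the expansion being "homogeneous" of the right multiplicity); type at level $r$ is exactly what (i) and (ii) were arranged to give; and $\om_{r+1}(\phi_{r+1})=\ord_{\psi_r}(R_r(\phi_{r+1}))=\ord_{\psi_r}(\psi_r)=1$ by construction, which in turn forces $N_r(\phi_{r+1})$ to be a single side of the minimal length $e_rf_r$, hence $\deg\phi_{r+1}=m_re_rf_r=m_{r+1}$. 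Finally, any polynomial of type $\ty$ has $\om_{r+1}\ge 1$ and therefore degree $\ge m_{r+1}$ by the degree formula $\deg P=m_{r+1}\om_{r+1}(P)$ recorded in Definition \ref{type}, giving minimality. I expect the main obstacle to be the bookkeeping in step (i)--(ii): making precise exactly which $p$-power one divides each surviving coefficient by (using $\ell_r,\ell'_r$ with $\ell_rh_r-\ell'_re_r=1$) so that the residual polynomial lands in $\F_r[y]$ with the right normalization, and verifying that the resulting Newton polygon has no extra vertices below the prescribed side — i.e. that we have genuinely achieved $\om_{r+1}=1$ and not accidentally something larger. This is precisely the point where the detailed valuation-theoretic lemmas of \cite[\S2]{GMN} (the behaviour of $v_r$ and $R_r$ on $\phi_r$-expansions) must be invoked.
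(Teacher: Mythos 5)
The paper itself contains no proof of this theorem: it is quoted from \cite[\S2.3]{GMN} as part of the recalled background. Measured against the construction there, your $\phi_r$-adic expansion strategy — take $\phi_{r+1}=\phi_r^{e_rf_r}+\sum_{j<e_rf_r}a_j(x)\phi_r(x)^j$ with $\deg a_j<m_r$, set $a_j=0$ when $e_r\nmid j$, choose the $v_r$-valuations of the surviving $a_{te_r}$ so that the level-$r$ Newton polygon is the single side of slope $\la_r$ of length $e_rf_r$, and then adjust their residues so that $R_r(\phi_{r+1})\sim\psi_r$ — is exactly the right plan, and your identification of the two places where the valuation lemmas of \cite[\S2]{GMN} carry the load (inheritance of the lower-level data from $\phi_r$, and the normalization via $\ell_r,\ell_r'$ in the residual polynomial) is accurate.

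There is, however, one genuine error in the opening paragraph: you assert that a polynomial of type $\ty$ satisfies $R_i(P)\sim\psi_i^{a_i}$ and that ``the requirement $\om_{r+1}(\phi_{r+1})=1$ forces all these $a_i$ to equal $1$.'' That is false for $i<r$. By the length relation (\ref{distance}) applied to a single branch, $\om_i(\phi_{r+1})=e_if_i\,\om_{i+1}(\phi_{r+1})$, so $\om_{r+1}=1$ gives $a_r=1$ but $a_{r-1}=\om_r(\phi_{r+1})=e_rf_r$, $a_{r-2}=e_{r-1}f_{r-1}e_rf_r$, and so on; these are $>1$ whenever any $e_if_i>1$. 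Consequently $N_i(\phi_{r+1})$ is one-sided of slope $\la_i$ but certainly not ``of the shortest possible length'' for $i<r$. Fortunately your actual degree and minimality argument does not use this: it relies only on the identity $\deg P=m_{r+1}\om_{r+1}(P)$ from Definition \ref{type} together with $\om_{r+1}(P)\ge1$, which is correct. So the slip is in the motivation, not in the construction; I flag it because it signals a misconception about how $\om_i$ propagates across levels, which matters elsewhere in the theory (in particular in the index computation of \S\ref{secthindex}).
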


We call such a polynomial $\phi_{r+1}(x)$ a \emph{representative of the type} $\ty$. We denote by $\tilde{\ty}:=(\phi_1(x);\cdots;\la_r,\phi_{r+1}(x))$, the extension of $\ty$ by $\phi_{r+1}(x)$; this object is prepared to be enlarged to a type of order $r+1$, $(\tilde{\ty};\lambda_{r+1},\psi_{r+1}(y))$, simply by taking any negative rational number $\lambda_{r+1}\in\Q^-$ and any irreducible monic polynomial $\psi_{r+1}(y)\in\F_{r+1}[y]$.

The representative of a type plays the analogous role in order $r$ to that played by an irreducible polynomial modulo $p$ in order one.

\subsection{Types versus prime ideals. The Basic algorithm}\label{ideals}
Recall that we have fixed a monic irreducible polynomial $f(x)\in\Z[x]$.

\begin{definition}
A type $\ty$ of order $r$ is said to be {\em $f$-complete} if $\om_{r+1}^\ty(f)=1$.
\end{definition}

\begin{theorem}[{\cite[Cor.3.8]{GMN}}]\label{criteri}
Let $\ty$ be an $f$-complete type of order $r$. Then the $p$-adic factor $f_\ty(x)$ is irreducible in $\Z_p[x]$. Moreover, if $L/\Q_p$ is the extension generated by $f_\ty(x)$, we have
$$
e(L/\Q_p)= e_1 \cdots e_{r},\quad f(L/\Q_p)= f_0f_1 \cdots f_{r}.
$$
\end{theorem}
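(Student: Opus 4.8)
The plan is to deduce the statement from the theory of Newton polygons of higher order as developed in \cite{GMN}, proceeding by induction on the order $r$. The key structural fact is that attaching an $f$-complete type $\ty$ of order $r$ to $f(x)$ is equivalent to producing a tower of local fields $\Q_p = L_0 \subset L_1 \subset \cdots \subset L_r \subseteq L$, where each step $L_{i}/L_{i-1}$ is governed by the $i$-th level $(\la_i,\phi_{i+1})$ of the type: the residual polynomial $R_i(\phi_{i+1})(y) \sim \psi_i(y)$ being irreducible of degree $f_i$ forces an unramified subextension of that degree, while the slope $\la_i = -h_i/e_i$ in lowest terms forces a totally ramified step of degree $e_i$. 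First I would invoke \cite[Cor.3.8]{GMN} (i.e.\ the result being cited) together with the interpretation of $\om_{r+1}^\ty(f)$ as the length of the principal part $N_{r+1}^-(f_\ty)$: the hypothesis $\om_{r+1}^\ty(f)=1$ says this length is $1$, which by \cite[Lem.2.17]{GMN} and the theory of residual polynomials means $f_\ty(x)$ is irreducible over $\Q_p$, since a factorization would split off a factor contributing positive length to the polygon or a proper factor of the (necessarily irreducible, by the type axioms) residual polynomial.

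Granting irreducibility, the ramification and residual degree formulas follow by unwinding the tower. At each level one uses the standard dictionary: if $\pi$ is a root of $f_\ty$ and one tracks the valuations $v_i$ carried by the type, the $\phi_i(\pi)$ acquire valuations whose denominators are exactly $e_1,\dots,e_r$, contributing a ramification index of $e_1\cdots e_r$ by the additivity/multiplicativity of ramification in towers; simultaneously, passing to residue fields, $\F_1 = \F_p[y]/(\psi_0)$ has degree $f_0$ over $\F_p$, and each subsequent $\F_{i+1} = \F_i[y]/(\psi_i)$ adds a factor $f_i$, so the residue field of $L$ contains $\F_{r+1}$, of degree $f_0f_1\cdots f_r$ over $\F_p$. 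Since $\deg f_\ty = m_{r+1} = m_r e_r f_r = \cdots = f_0 \prod_{i=1}^r e_i f_i = (e_1\cdots e_r)(f_0 f_1 \cdots f_r)$, the fundamental identity $e(L/\Q_p)f(L/\Q_p) = [L:\Q_p] = \deg f_\ty$ forces both inequalities $e(L/\Q_p) \ge e_1\cdots e_r$ and $f(L/\Q_p) \ge f_0\cdots f_r$ to be equalities.

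The main obstacle is the first step: establishing that $\om_{r+1}^\ty(f)=1$ genuinely implies irreducibility of $f_\ty$, rather than merely that $f_\ty$ has a unique irreducible factor "of type $\ty$". This requires knowing that the theory of higher Newton polygons is complete in the sense that every irreducible factor of $f_\ty$ over $\Q_p$ is detected by some extension of $\ty$ (a Montes-type theorem of the form "types see all factors"), so that $\om_{r+1}^\ty(f) \ge$ the number of such factors counted appropriately; with the count equal to $1$, there is exactly one factor and it equals $f_\ty$ itself. I would cite this completeness from \cite{GMN}/\cite{montes} rather than reprove it. Once irreducibility is in hand, the remaining verification of the $e$ and $f$ formulas is the routine tower computation sketched above, relying only on the degree bookkeeping $m_{i+1}=e_if_im_i$ already recorded in the excerpt.
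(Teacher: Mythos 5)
The paper does not actually prove this theorem: it is cited verbatim from \cite[Cor.3.8]{GMN}, so there is no internal proof to compare against. That said, your sketch captures the ideas that underlie the cited result, with two caveats.

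First, your opening sentence ("First I would invoke \cite[Cor.3.8]{GMN}, i.e.\ the result being cited") is circular as written. If the goal is to reprove the statement, you cannot start by invoking it; if the goal is merely to note that it is imported background (which is what the present paper does), then the whole rest of the sketch is unnecessary. As it stands the proposal hovers uncomfortably between these two roles. Second, and more substantively, you correctly isolate the genuine gap: the implication "$\om_{r+1}^\ty(f)=1$ forces $f_\ty$ irreducible." The mechanism is the multiplicativity of $\om_{r+1}$ (it is a semigroup homomorphism), which gives $\om_{r+1}(g_1)+\om_{r+1}(g_2)=1$ for any factorization $f_\ty=g_1g_2$; but one then needs the hard input that $\om_{r+1}(g)\ge 1$ for every nonconstant monic factor $g$ of $f_\ty$, which is exactly the Theorems of the polygon and of the residual polynomial in \cite{GMN}. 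You name this gap and defer it, which is honest but means the proposal does not constitute an independent proof. The second half of your argument — residue field contains $\F_{r+1}$ so $f(L/\Q_p)\ge f_0\cdots f_r$, the slopes $-h_i/e_i$ force $e(L/\Q_p)\ge e_1\cdots e_r$, and the identity $e\cdot f=[L:\Q_p]=m_{r+1}=(e_1\cdots e_r)(f_0\cdots f_r)$ pins both down — is clean, correct, and is indeed the standard way the $e$ and $f$ formulas are established once irreducibility is known.
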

Thus, an $f$-complete type singles out a unique prime ideal $\p$ dividing $p\,\ZK$, whose ramification index and residual degree can be read in the data of $\ty$:
$$
e(\p/p)= e_1 \cdots e_{r},  \quad f(\p/p)=f_0f_1 \cdots f_{r}.
$$

The $p$-adic factorization process of \cite{GMN} consists essentially in the cons\-truction of a set $\T$ of $f$-complete types,  that faithfully represents $f(x)$. Thus, it can be interpreted as a \emph{Basic algorithm}, to determine the prime ideal decomposition of $p\,\ZK$.
The types are built iteratively by means of  Theorem \ref{construeix-phi}, and the theory of Newton polygons of higher order. We start with the set $\T_0(f):=\ty_0(f)$, that faithfully represents $f(x)$. We extend the non-$f$-complete types of this set to types of order one, in order to construct a set $\T_1(f)$  that, again, faithfully represents $f(x)$, etc.
At each order $r$, the extension process is carried out by a main loop that performs the following operations.\bigskip

\noindent{\bf Main loop of the Basic algorithm. }At the input of a non-$f$-complete type $\ty$ of order $r-1$, for which $\om_r(f)>0$, and a representative $\phi_r(x)$:
\begin{itemize}
\item[1)] Compute the Newton polygon of $r$-th order, $N_r(f)=S_1+\dots+S_t$, with respect to $\ty$ and $\phi_r(x)$.
\item[2)] For every side $S_j$ of negative slope $\lambda_{r,j}<0$, compute the residual polynomial of $r$-th order,
$R_{r,j}(f)(y)\in\F_{r}[y]$,  with respect to $\ty$, $\phi_{r}(x)$ and $\lambda_{r,j}$.
\item[3)] Factorize this polynomial in $\F_{r}[y]$:
$$
R_{r,j}(f)(y)=\psi_{r,1}(y)^{a_1}\cdots\psi_{r,s}(y)^{a_s}.
$$
\item[4)] For every factor $\psi_{r,k}(y)$, compute a representative of the type $\ty^{j,k}:=(\tilde{\ty};\lambda_{r,j},\psi_{r,k}(y))$.
\end{itemize}
For those factors  $\psi_{r,k}(y)$ with exponent $a_k=1$, the type $\ty^{j,k}$ is complete. For the remaining types we continue the iterative process.

Thus, each non-complete type of order $r-1$ has sprouted several types of order $r$, which are called \emph{branches} of the input
type $\ty$. We have a factorization in $\Z_p[x]$:
$$
f_\ty(x)=\prod_{j,k}f_{\ty^{j,k}}(x),
$$
with $\deg f_{\ty^{j,k}}=e_{r,j}f_{r,k}m_{r}$. Also, $(\om_{r+1})^{\ty^{j,k}}(f)>0$, for all $j,k$, and
\begin{equation}\label{distance}
\om_{r}^\ty(f)=\sum_{j,k}e_{r,j}f_{r,k}(\om_{r+1})^{\ty^{j,k}}(f).
\end{equation}
Hence, the invariant $\om_{r}^{\ty}(f)$ is an upper bound for the number of irreducible factors of $f_\ty(x)$, and it is a kind of measure of the distance that is left to complete the analysis of the type $\ty$ and its branches (or equivalently, to decompose each $f_\ty^{j,k}(x)$ into a product of irreducible factors). Also, (\ref{distance}) shows that, except for the case in which there is only one branch with $e_{r}=f_{r}=1$, the branches are always closer to be $f$-complete than $\ty$.

We denote by $\ty_{r}(f)$ the set of types of order $r$ obtained by aplying the Main loop to all non-$f$-complete types of $\ty_{r-1}(f)$. We denote by $\ty_i(f)^{\cpl}$ the subset of the $f$-complete types of $\ty_i(f)$, and we define
$$
\T_{r}(f):=\ty_{r}(f)\cup\left(\bigcup_{0\le i<r}\ty_i(f)^{\cpl}\right).
$$
\begin{proposition}[{\cite[\S3]{GMN}}]\label{end}
$\T_{r}(f)$ faithfully represents $f(x)$.
\end{proposition}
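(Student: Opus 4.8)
The plan is to argue by induction on the order $r$, taking as the only nontrivial input the per-branch factorization of $f_\ty(x)$ recorded just above the statement (which is itself the content imported from \cite[\S3]{GMN}). The base case $r=0$ is already noted in the text: by Hensel's lemma $f(x)=\prod_{\ty\in\ty_0(f)}f_\ty(x)$, and every $\ty\in\ty_0(f)=\T_0(f)$ divides $f(x)$ by the definition of $\ty_0(f)$, so $\T_0(f)$ faithfully represents $f(x)$.

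For the inductive step I would first rewrite $\T_{r-1}(f)$ and $\T_r(f)$ so as to isolate exactly what changes. Splitting $\ty_{r-1}(f)$ into its $f$-complete and non-$f$-complete parts gives
$$\T_{r-1}(f)=\ty_{r-1}(f)^{\mathrm{ncpl}}\,\sqcup\,\mathcal C,\qquad \mathcal C:=\bigcup_{0\le i\le r-1}\ty_i(f)^{\cpl},$$
where $\ty_{r-1}(f)^{\mathrm{ncpl}}$ is shorthand for the non-$f$-complete types of $\ty_{r-1}(f)$; and straight from the definition, $\T_r(f)=\ty_r(f)\,\sqcup\,\mathcal C$, the union being disjoint because the members of $\ty_r(f)$ have order $r$ while those of $\mathcal C$ have order $<r$. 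Thus the passage from $\T_{r-1}(f)$ to $\T_r(f)$ consists precisely in replacing $\ty_{r-1}(f)^{\mathrm{ncpl}}$ by $\ty_r(f)$, and by construction $\ty_r(f)$ is the union, over $\ty\in\ty_{r-1}(f)^{\mathrm{ncpl}}$, of the branch sets $\{\ty^{j,k}\}$ — a disjoint union, since distinct parents carry distinct extensions $\tilde\ty$, and within one parent distinct pairs $(j,k)$ give distinct slopes $\lambda_{r,j}$ or distinct irreducible factors $\psi_{r,k}$.

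Now I would invoke, for each non-$f$-complete $\ty$ of order $r-1$, the facts quoted above: every branch $\ty^{j,k}$ divides $f(x)$, i.e. $\om_{r+1}^{\ty^{j,k}}(f)>0$, and $f_\ty(x)=\prod_{j,k}f_{\ty^{j,k}}(x)$ in $\Z_p[x]$. Starting from the inductive hypothesis $f(x)=\prod_{\ty\in\mathcal C}f_\ty(x)\cdot\prod_{\ty\in\ty_{r-1}(f)^{\mathrm{ncpl}}}f_\ty(x)$ and substituting this factorization into each factor of the second product, the disjointness noted above gives
$$f(x)=\prod_{\ty\in\mathcal C}f_\ty(x)\cdot\prod_{\ty\in\ty_{r-1}(f)^{\mathrm{ncpl}}}\ \prod_{j,k}f_{\ty^{j,k}}(x)=\prod_{\ty\in\mathcal C}f_\ty(x)\cdot\prod_{\ty'\in\ty_r(f)}f_{\ty'}(x)=\prod_{\ty\in\T_r(f)}f_\ty(x).$$
Divisibility of $f(x)$ by every type of $\T_r(f)$ is immediate: the types in $\mathcal C$ divide $f(x)$ by the inductive hypothesis, and the branches do by the positivity of $\om_{r+1}^{\ty^{j,k}}(f)$ just quoted. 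This closes the induction.

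On our side the argument is essentially bookkeeping; the genuinely substantive ingredient — the factorization $f_\ty=\prod_{j,k}f_{\ty^{j,k}}$ together with the fact that each branch actually divides $f$ — is the part resting on the theory of higher-order Newton polygons and residual polynomials, and this is exactly what \cite[\S3]{GMN} supplies. The only point deserving a little care here is verifying that the set-theoretic rearrangements above involve no overlap and no omission: distinct parent types yield distinct branches, an order-$r$ type is never an order-$<r$ type, and $f$-complete types, once produced, are carried along unchanged and never reprocessed — all of which I have indicated above.
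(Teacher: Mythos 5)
Your proof is correct and matches the intended argument: the paper offers no independent proof (it cites \cite[\S3]{GMN}), but the two facts you rely on, namely $f_\ty(x)=\prod_{j,k}f_{\ty^{j,k}}(x)$ and $\om_{r+1}^{\ty^{j,k}}(f)>0$, are exactly the Main-loop outputs quoted in the paper just before the statement, so the proposition reduces to precisely the bookkeeping induction you carried out.
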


To show that the Basic algorithm deserves this name, we have to prove that, after a finite number of enlargements, all types of $\ty_r(f)$ will be complete. To this purpose we introduce another variable to measure how far is a type from being complete, that works even in the unibranch case with $e_{r}=f_{r}=1$. This control variable is defined in terms of \emph{higher indices}.

\subsection{Indices of higher order}\label{secthindex}
The results of this section are extracted from \cite[\S4]{GMN}. Denote
$$
\ind(f):=v\left(\left(\ZK\colon\Z[\t]\right)\right),
$$
and recall the well-known relationship, $v(\disc(f))=2\ind(f)+v(\disc(K))$, between $\ind(f)$, the discriminant of $f(x)$
and the discriminant of $K$.

\begin{definition}
Let $N=S_1+\cdots+S_t$ be a principal polygon, with finite sides ordered by increasing slopes $\la_1<\cdots<\la_t<0$. Denote by $E_i=\ell(S_i)$, $H_i=H(S_i)$, $d_i=d(S_i)$ the respective length, height and degree of each side \cite[\S1.1]{GMN}. We define the \emph{index of the polygon} $N$ to be the nonnegative integer
$$
\ind(N):=\sum_{i=1}^t\frac12(E_iH_i-E_i-H_i+d_i)+\sum_{1\le i<j\le t}E_iH_j.
$$
\end{definition}
This number is equal to the number of points with integral coordinates that lie below or on the polygon, strictly above the horizontal line that passes through the last point of $N$ and strictly beyond the vertical axis. Hence, $\ind(N)=0$ if and only if $N$ has a unique side with height $H=1$, or length $E=1$.

\begin{definition}\label{index}
Let $\ty$ be a type of order $r-1$, and let $\phi_{r}(x)$ be a representative of $\ty$. We define its $f$-{\em index} to be the nonnegative integer $$\ind_\ty(f):=\ind_{\ty,\phi_r}(f):=f_0\cdots f_{r-1}\ind(N_r^-(f)),$$ the Newton polygon of $r$-th order taken with respect to $\ty$ and $\phi_r(x)$.

We say that $\ty$ is \emph{$f$-maximal} if $\ty$ divides $f(x)$ and $\ind_\ty(f)=0$.

For any natural number $r\ge1$, we define $\ind_r(f):=\sum_{\ty\in\ty_{r-1}(f)}\ind_\ty(f).$
\end{definition}

Since the Newton polygon $N_{r}^-(f)$ depends on the choice of $\phi_{r}(x)$, the value of $\ind_\ty(f)$, and the fact of being $f$-maximal, depends on this choice too.

\begin{proposition}[{\cite[Lem.4.16]{GMN}}]\label{complmax}{\quad }
\begin{itemize}
\item[a)] If $\ty$ is $f$-complete, then it is $f$-maximal.
\item[b)]  If $\ty$ is $f$-maximal, then either $\ty$ is  $f$-complete, or the output of the Main loop applied to $\ty$ is a unique branch of order $r+1$, which is $f$-complete.
\end{itemize}
\end{proposition}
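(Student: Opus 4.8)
Here is how I would approach Proposition~\ref{complmax}. The plan is to reduce both statements to combinatorial properties of a single object: the principal Newton polygon $N:=N_{r+1}^-(f)$ of order $r+1$ of $f$, taken with respect to $\ty$ (of order $r$) and a representative $\phi_{r+1}(x)$ of $\ty$, which exists by Theorem~\ref{construeix-phi} because in both parts $\ty$ divides $f$. Two facts recalled in the text do almost all the work: the length of $N$ equals $\om_{r+1}^\ty(f)$ \cite[Lem.2.17]{GMN}, and $\ind(N)=0$ precisely when $N$ consists of a single side whose length or height equals $1$.

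For part (a) I would argue as follows. If $\ty$ is $f$-complete then $\om_{r+1}^\ty(f)=1$, so $N$ has length $1$, hence is a single side of length $E=1$; therefore $\ind(N)=0$ and $\ind_\ty(f)=f_0\cdots f_r\cdot\ind(N)=0$, whatever representative $\phi_{r+1}$ is used. Since $\ty$ divides $f$, this says exactly that $\ty$ is $f$-maximal.

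For part (b), fix the representative $\phi_{r+1}$ used to define $\ind_\ty(f)$ and suppose $\ty$ is $f$-maximal. From $\ty\mid f$ one gets $\om_{r+1}^\ty(f)>0$, i.e.\ $\ell(N)>0$, and from $f_0\cdots f_r>0$ the equality $\ind_\ty(f)=0$ gives $\ind(N)=0$. If $\ell(N)=1$ then $\om_{r+1}^\ty(f)=1$ and $\ty$ is $f$-complete, which is the first alternative. Otherwise $\ell(N)\ge 2$; since $\ind(N)=0$ forces $N$ to be one-sided and its single side $S$ has length $\ge 2$, the side must have height $1$. Then $\lambda_{r+1}=-1/\ell(S)$ in lowest terms, so $e_{r+1}=\ell(S)$ and the degree of $S$ is $d(S)=\ell(S)/e_{r+1}=1$; hence the residual polynomial $R_{r+1}(f)(y)\in\F_{r+1}[y]$ has degree $1$. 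As the extreme coefficients of the residual polynomial of a side correspond to the endpoints of that side, which lie on $N$, this degree-one polynomial has nonzero constant term, so it is $\sim y-c$ with $c\ne 0$; thus $\psi_{r+1}(y):=y-c$ is an admissible last datum ($\ne y$), and the Main loop applied to $\ty$ (and $\phi_{r+1}$) yields exactly one branch $\ty':=(\tilde{\ty};\lambda_{r+1},\psi_{r+1})$ of order $r+1$, attached to a residual factor of exponent $1$. Finally I would check that $\ty'$ is $f$-complete: since $R_{r+1}(f)$ is precisely the order-$(r+1)$ residual polynomial attached to $\ty'$, one has $\om_{r+2}^{\ty'}(f)=\ord_{\psi_{r+1}}(R_{r+1}(f))=1$; alternatively this drops out of the distance formula (\ref{distance}) applied at order $r+1$, which here reads $\om_{r+1}^\ty(f)=e_{r+1}f_{r+1}\,\om_{r+2}^{\ty'}(f)=\ell(S)\,\om_{r+2}^{\ty'}(f)$.

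The combinatorics of $\ind(N)=0$ and of the degree of a side are routine, and so is the case split on $\ell(N)$. The step I expect to require the most care is the bookkeeping around the residual polynomial in the non-complete case: checking that $R_{r+1}(f)$, formed from $\ty$ and $\phi_{r+1}$, really is the residual polynomial that the definition of $\om_{r+2}^{\ty'}$ attaches to the branch $\ty'$ — so that $\om_{r+2}^{\ty'}(f)$ can be read off immediately — and, relatedly, confirming that the linear factor $y-c$ is a genuine $\psi_{r+1}\ne y$, so that $\ty'$ is an honest type of order $r+1$.
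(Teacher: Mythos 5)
Your argument is correct. Note that the paper does not actually reproduce a proof of this proposition — it is cited directly from \cite[Lem.4.16]{GMN} — so there is nothing in the paper to differ from; but your route (reduce both parts to the combinatorics of a single principal polygon $N$ of index zero, using $\ell(N)=\om_{r+1}^\ty(f)$ and the characterization ``$\ind(N)=0$ iff $N$ is one-sided with $E=1$ or $H=1$'') is the natural one and the one the lemma is designed to support. For part (a), $\ell(N)=1$ gives $\ind(N)=0$ for any choice of representative, and divisibility is automatic from $\om_{r+1}^\ty(f)=1$. For part (b), the case split on $\ell(N)$ is right; when $\ell(N)\ge 2$ the forced $H=1$ gives $h_{r+1}=1$, $e_{r+1}=\ell(S)$, degree $d=1$, and the residual polynomial has nonzero constant term because both endpoints of the unique side lie on the polygon, so $\psi_{r+1}\ne y$ and there is a single branch. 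Your two alternatives for concluding $\om_{r+2}^{\ty'}(f)=1$ — reading $\ord_{\psi_{r+1}}(R_{r+1}(f))=1$ directly, or invoking the distance formula (\ref{distance}) at order $r+1$, which here reads $\ell(S)=e_{r+1}f_{r+1}\,\om_{r+2}^{\ty'}(f)$ with $e_{r+1}=\ell(S)$ and $f_{r+1}=1$ — are both sound, and the caveat you flag (that $R_{r+1}(f)$ computed from $(\ty,\phi_{r+1},\lambda_{r+1})$ is exactly the residual polynomial that $\om_{r+2}^{\ty'}$ refers to) is indeed the one bookkeeping point worth making explicit, since the order-$(r+1)$ residual polynomial depends only on $\tilde{\ty}$ and $\lambda_{r+1}$, not on $\psi_{r+1}$.
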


Thus, the fact that all types of $\ty_r(f)$ are complete is essentially equivalent to the fact that they are all maximal.
The proof that  this will occur after a finite number of iterations is provided by the Theorem of the index.

\begin{theorem}[Theorem of the index {\cite[Thm.4.18]{GMN}}]
For all $r\ge 1$,
\begin{equation}\label{thindex}
\ind(f)\ge \ind_1(f)+\cdots+\ind_r(f),
\end{equation}
and equality holds if and only if all types of $\ty_r(f)$ are $f$-maximal.
\end{theorem}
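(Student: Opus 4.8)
The plan is to localize the problem and then induct on the order $r$, carrying a refinement that records how much of the index of each $p$-adic factor is already visible. First, by Hensel's lemma $f=\prod_{\ty\in\ty_0(f)}f_{\ty}$ in $\Z_p[x]$ with pairwise coprime reductions mod $p$, so $v(\operatorname{Res}(f_{\ty},f_{\ty'}))=0$ for $\ty\neq\ty'$; combined with the classical discriminant factorization $v(\disc(gh))=v(\disc g)+v(\disc h)+2v(\operatorname{Res}(g,h))$ and the local conductor--discriminant relation, this gives $\ind(f)=\sum_{\ty\in\ty_0(f)}\ind(f_{\ty})$, and $\ind_s(f)$ decomposes the same way (each type of $\ty_{s-1}(f)$ descends from a unique order-zero type, and the relevant $s$-th order polygon depends only on the corresponding $p$-adic factor). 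Hence I may assume $\overline f$ is a power of a single irreducible polynomial and prove the statement for $f_{\ty}$, $\ty$ of order zero.

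The engine is the additivity formula $\ind(gh)=\ind g+\ind h+v(\operatorname{Res}(g,h))$ applied to the branch factorization $f_{\ty}=\prod_{j,k}f_{\ty^{j,k}}$ produced by one pass of the Main loop --- Theorem \ref{construeix-phi} provides the representatives that make this factorization available and (\ref{distance}) records the degrees. I would prove by induction on $r$ the strengthened identity
$$\ind(f)=\ind_1(f)+\cdots+\ind_r(f)+\sum_{\ty\in\ty_r(f)}\rho_r(\ty),$$
where $\rho_r(\ty)\ge 0$ is the part of $\ind(f_{\ty})$ not captured by the Newton polygons $N_1,\dots,N_r$ along the ancestry of $\ty$, and $\rho_r(\ty)=0$ if and only if $\ty$ is $f$-maximal. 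The case $r=1$ refines Ore's index inequality: for an order-zero type one splits $f_{\ty}$ first along the sides of $N_1^-(f_{\ty})$ and then along the irreducible factors of each residual polynomial, and evaluates the resulting resultants from the slopes and heights of $N_1$; the sum of their valuations equals $\ind_{\ty}(f)=f_0\ind(N_1^-(f_{\ty}))$ exactly when every residual polynomial is separable, and strictly exceeds it otherwise, the excess being precisely what passes to the branches. The inductive step is the same computation one order higher, with $v$ replaced by the higher valuations $v_{r+1}$ of \cite{GMN} and $N_1$ by $N_{r+1}$ with respect to the representative $\phi_{r+1}$; it yields the telescoping relation $\rho_r(\ty)=\ind_{\ty}(f)+\sum_{j,k}\rho_{r+1}(\ty^{j,k})$ for non-maximal $\ty$ while keeping $\rho_r(\ty)=0$ for maximal $\ty$, and the displayed identity for $r+1$ follows. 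Since every $\rho_r(\ty)\ge 0$ this gives the inequality (\ref{thindex}), and equality holds if and only if $\rho_r(\ty)=0$ for all $\ty\in\ty_r(f)$, i.e.\ all these types are $f$-maximal. The implication ``$f$-maximal $\Rightarrow\rho_r=0$'' uses Proposition \ref{complmax} and Theorem \ref{criteri}: after at most one further Main-loop step a maximal type becomes $f$-complete via a single branch, and for that branch the next Newton polygon has index $0$, so nothing remains.

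The main obstacle is the resultant computation that drives the inductive step: one must express $v(\operatorname{Res}(f_{\ty^{j,k}},f_{\ty^{j',k'}}))$ explicitly in terms of the slope--height data of $N_{r+1}(f)$, treating separately branches on distinct sides and branches sharing a side but with coprime residual factors, and then match the total of these contributions term by term with the lattice-point count defining $\ind(N_{r+1}^-(f))$ --- crucially showing that in the inseparable case the deficiency is exactly $\sum_{j,k}\rho_{r+1}(\ty^{j,k})$ rather than just an upper bound. This is where the full higher-order machinery of \cite{GMN} is needed (the valuations $v_{r+1}$, the homomorphisms $\om_{r+1}$, and the compatibility of residual polynomials at consecutive orders); the localization, the additivity formula, and the bookkeeping of the telescoping sum are routine once it is in place.
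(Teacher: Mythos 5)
First, a point of context: the present paper does not prove the Theorem of the index but cites it directly from \cite[Thm.\ 4.18]{GMN}, so the comparison has to be against that reference rather than against any argument on the page. Your overall architecture --- localize via Hensel's lemma and the additivity $\ind(gh)=\ind g+\ind h+v(\operatorname{Res}(g,h))$, then induct on the order, carrying a nonnegative residual quantity $\rho_r(\ty)$ that vanishes exactly for $f$-maximal types --- is a reasonable skeleton, and the resultant route is indeed the classical way to attack the order-one (Ore) case. However, the quantitative claim that drives your base case is false, and since the inductive step is ``the same computation one order higher,'' the error propagates.

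You assert that, for an order-zero type $\ty$, the sum of resultant valuations between the branch factors $f_{\ty^{j,k}}$ ``equals $\ind_\ty(f)=f_0\,\ind(N_1^-(f_\ty))$ exactly when every residual polynomial is separable, and strictly exceeds it otherwise.'' Both the equality and the direction of the inequality are wrong. Take $p$ odd and
\[
f_\ty(x)=(x^2+p^3)(x^2+p^5)=x^4+(p^3+p^5)x^2+p^8,
\]
with $\phi_1(x)=x$. Here $N_1^-(f_\ty)$ has two sides of slopes $-5/2$ and $-3/2$, each of length $2$ and degree $1$, and a direct lattice-point count gives $\ind(N_1^-(f_\ty))=2+1+E_1H_2=2+1+6=9$, so $\ind_\ty(f)=9$. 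Both residual polynomials are linear, hence separable, and both order-one types are $f$-complete. There are exactly two branches, with
\[
v\bigl(\operatorname{Res}(x^2+p^3,\;x^2+p^5)\bigr)=v\bigl((p^5-p^3)^2\bigr)=6,
\]
so $\sum v(\operatorname{Res})=6<9=\ind_\ty(f)$: the resultants alone account for strictly \emph{less} than the polygon index even in the fully separable case. The missing $3=\ind(x^2+p^3)+\ind(x^2+p^5)=1+2$ sits inside the indices of the irreducible branch factors themselves, which are generally nonzero (already $\ind(x^2+p^3)=1$, since $\Z_p[\sqrt{-p^3}]$ has index $p$ in $\Z_p[\sqrt{-p^3}/p]$). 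In other words, the polygon index $\ind(N_1^-)$ is not the sum of the cross-resultants; it is the sum of the cross-resultants \emph{plus} the polygon indices $\ind(N_1^-(f_{\ty^{j,k}}))$ of the branch factors, and matching those inner contributions against $\ind(f_{\ty^{j,k}})$ is precisely the statement being proved one level down. Your inductive scheme therefore needs a genuinely different bookkeeping: either a Minkowski-sum decomposition of the polygon index into resultant cross-terms and branch sub-polygon indices (so the recursion threads through the latter), or --- as \cite{GMN} does --- a direct construction of integral elements indexed by the lattice points under the higher-order polygons, avoiding resultants altogether. A further, more minor issue is that your definition of $\rho_r(\ty)$ as ``the part of $\ind(f_\ty)$ not captured'' makes the nonnegativity $\rho_r\ge 0$ a restatement of the theorem rather than an ingredient of the proof; the argument needs an independent source of nonnegativity, which is exactly what the integrality/lattice-point construction supplies.
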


This theorem shows that after a finite number of iterations all types of $\ty_r(f)$ will be $f$-maximal, because the sum of the right-hand side is bounded by the absolute constant $\ind(f)$. By (b) of Proposition \ref{complmax}, either $\T_r(f)$, or $\T_{r+1}(f)$, will contain only $f$-complete types. By Theorem \ref{criteri} and Proposition \ref{end}, our family of complete types determines the complete factorization of $p\,\ZK$ into a product of prime ideals. At this final stage we have necessarily an equality in (\ref{thindex}), so that we get a computation of $\ind(f)$ as a by-product.

\begin{rem}
If at the end of step 1 of the Main loop of the Basic algorithm, we accumulate to a global variable the value $\ind_{\ty}(f)$, the final output of this global variable is $\ind(f)$. In particular, $\ind_\ty(f)$ is an absolute measure of the distance covered by each iteration of the Main loop, towards the end of the algorithm.
\end{rem}

Summing up, we have proved the main theorem of the paper.
\begin{theorem}\label{factorization}
 Given a number field $K$, a generating equation $f(x)\in\Z[x]$, and a prime number $p$,
we can construct a set $\T$ of $f$-complete types, that faithfully represents $f(x)$. The types of $\T$ are in 1-1 correspondence with the prime ideals of $K$ lying above $p$, and the ramification index and residual degree of each ideal can be read from data  of the corresponding type. Along the construction of $\T$, the algorithm computes the $p$-valuation of the index of $f(x)$ as well.
\end{theorem}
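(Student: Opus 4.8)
The plan is to execute the Basic algorithm of Section~\ref{ideals} and to track simultaneously three things: faithful representation of $f(x)$ at every stage, eventual completeness of the types produced, and the accumulated value of the higher indices. I would begin by setting $\T_0(f):=\ty_0(f)$, which faithfully represents $f(x)$ by Hensel's lemma, and then iterating the Main loop on every non-$f$-complete type of $\ty_{r-1}(f)$ to obtain $\ty_r(f)$ and hence $\T_r(f)$. Proposition~\ref{end} guarantees that each $\T_r(f)$ faithfully represents $f(x)$, so the only genuine issue is termination: I must produce an $r_0$ for which $\T_{r_0}(f)$ consists entirely of $f$-complete types.

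For termination I would appeal to the Theorem of the index. The partial sums $s_r:=\ind_1(f)+\cdots+\ind_r(f)$ form a non-decreasing sequence of non-negative integers bounded above by the fixed constant $\ind(f)$, hence they stabilize: there is an $r_0$ with $s_r=s_{r_0}$ for all $r\ge r_0$, i.e.\ $\ind_{r_0+1}(f)=0$. Since $\ind_{r_0+1}(f)=\sum_{\ty\in\ty_{r_0}(f)}\ind_\ty(f)$ is a sum of non-negative terms, every $\ty\in\ty_{r_0}(f)$ has $\ind_\ty(f)=0$ for its chosen representative; as such types divide $f(x)$ by construction, each is $f$-maximal. By Proposition~\ref{complmax}(b), each non-$f$-complete $\ty\in\ty_{r_0}(f)$ produces, after one further pass of the Main loop, a single $f$-complete branch of order $r_0+1$; therefore all types of $\ty_{r_0+1}(f)$ are $f$-complete, and the set $\T:=\T_{r_0+1}(f)$, which by its very definition also retains the complete types collected at all lower orders, consists entirely of $f$-complete types while still faithfully representing $f(x)$ by Proposition~\ref{end}.

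It then remains only to read off the arithmetic. By Theorem~\ref{criteri}, for each $\ty\in\T$ the $p$-adic factor $f_\ty(x)$ is irreducible in $\Z_p[x]$, so that $f(x)=\prod_{\ty\in\T}f_\ty(x)$ is the factorization of $f$ into monic irreducibles over $\Z_p$; these factors are pairwise distinct since $f$, being irreducible over $\Q$, is separable. Under the canonical identification $K\otimes_\Q\Qp\cong\prod_{\p\tq p}K_\p$ the monic irreducible factors of $f$ over $\Qp$ correspond bijectively to the primes $\p$ of $\ZK$ lying above $p$; thus $\ty\mapsto\p_\ty$ is the asserted $1$-$1$ correspondence, and Theorem~\ref{criteri} yields $e(\p_\ty/p)=e_1^\ty\cdots e_{r}^\ty$ and $f(\p_\ty/p)=f_0^\ty f_1^\ty\cdots f_{r}^\ty$, both of which are data carried by $\ty$. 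Finally, since all types of $\ty_{r_0+1}(f)$ are $f$-complete and hence $f$-maximal by Proposition~\ref{complmax}(a), equality holds in (\ref{thindex}), so $\ind(f)=\ind_1(f)+\cdots+\ind_{r_0+1}(f)$; consequently the value accumulated by adding $\ind_\ty(f)$ at each execution of step~1 of the Main loop equals $\ind(f)$, and $v(\disc(K))=v(\disc(f))-2\ind(f)$ is obtained once $v(\disc(f))$ is known. The one delicate point in all of this is the termination argument: everything rests on the Theorem of the index providing the absolute a priori bound $\ind(f)$ that forces the sequence of index sums to stabilize, after which Proposition~\ref{complmax} does the rest.
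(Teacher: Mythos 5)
Your argument is correct and follows essentially the same route as the paper: start from $\T_0(f)=\ty_0(f)$, iterate the Main loop using Proposition~\ref{end} for faithful representation, invoke the Theorem of the index to bound the non-decreasing integer sequence $\ind_1(f)+\cdots+\ind_r(f)$ and force all types of some $\ty_{r_0}(f)$ to be $f$-maximal, pass to $\T_{r_0+1}(f)$ via Proposition~\ref{complmax}, and then read off irreducibility, the $1$-$1$ correspondence, and $e,f$ from Theorem~\ref{criteri}, with $\ind(f)$ coming from the equality case of the Theorem of the index. Your only departure from the paper's phrasing is that you always pass to $\T_{r_0+1}(f)$ rather than distinguishing the two cases ``$\T_{r_0}(f)$ or $\T_{r_0+1}(f)$,'' which is a harmless simplification since one extra pass never hurts.
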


The Theorem of the index and Proposition \ref{complmax} show that the number of iterations of the Main loop is bounded by $\ind(f)$. Actually, in practice, the number of iterations is much lower,
because  in each step, $\ind_\ty(f)$ is usually much bigger than one, due to the abundance of the number of points of integer coordinates below an average Newton polygon with a fixed length $\om_{r}^\ty(f)$, and the fact that these points are counted with weight $f_0\cdots f_{r-1}$.

In the next section we introduce a crucial optimization. A \emph{refinement process} will control at each iteration wether it is strictly necessary to build a type of higher order, or it is possible to keep working in the same order, to  avoid an increase of the recursivity in the computations. For instance, the polynomial $f(x)=(x-2)^2+2^{2k}$ would require the construction of types of level $\approx k$ in a strict application of the Basic algorithm, while it can be completely analyzed with a refined type of order 1.

\section{Optimal representatives of types}\label{secrefinement}
\subsection{Detection of optimal representatives}
The construction of types dividing a given polynomial is not canonical: in the construction of the representatives $\phi_r(x)$ one makes some choices, mainly related to lifting certain polynomials over finite fields to polynomials over $\Z$. A natural question arises: are there some choices better than other ones?

Consider the following trivial example: let $p=2$, $f(x)=x^2-4x+12$, and $K=\Q(\theta)=\Q(\sqrt{-2})$, with $\ZK=\Z[\sqrt{-2}]$. The polynomial $f(y)$ has only one irreducible factor, $\psi_0(y)=y$, modulo $2$; thus, the type of order zero $\ty=\psi_0(y)$ gives no information about the factorization of $2\,\ZK$. The more natural lifting of $\psi_0$ to $\Z[x]$ is $\phi_1(x)=x$, and the corresponding Newton polygon and residual polynomial determine a unique extension of $\ty$ to a type of order one, $(x;-1,y+1)$, which is still not complete, so that we must construct a type of (at least) order 2 to determine the factorization of $2\,\ZK$. If we choose instead, $\phi_1(x)=x-2$, we find $f(x)=(x-2)^2+2^3$, and the unique extension of $\ty$ to a type of  order one, $(\phi_1(x);-3/2,y+1)$, is complete. Thus, it is clear that this second choice of $\phi_1(x)$ is better.

While it seems very difficult to predict a priori whether a choice of $\phi_r(x)$ is better than another,
 it is possible a posteriori to know if our choice was optimal and, if this is not  the case, to improve its quality.

\begin{theorem}\label{optimal} Let $\ty^0\in \ty_{r-1}(f)$ be a type of order $r-1$, which is not $f$-complete, and let $\phi_r(x)$ be a representative of $\ty^0$. Let $\ty=(\tilde{\ty}^0;\la_r,\psi_r(y))\in\ty_r(f)$ be one of the branches of $\ty^0$, and let $f_{\ty}(x)\in\Z_p[x]$ be the factor of $f(x)$ determined by $\ty$.
Let $\phi_{r}'(x)\in\Z[X]$ be another representative of $\ty^0$. If $e_rf_r>1$, then,
\begin{itemize}
\item[a)] The Newton polygon $N'_r(f_\ty)$,  with respect to $\ty^0$ and $\phi'_r(x)$, is one-sided with slope $\la'_r\ge \la_r$, and it has the same end point than $N_r(f_\ty)$.
\item[b)] The residual polynomial $R_r'(f_\ty)(y)$, with respect to $\ty^0$, $\phi'_r(x)$ and $\la'_r$, has only one irreducible factor in $\F_r[y]$; that is, $R_r'(f_\ty)(y)\sim \psi'_r(y)^{a'_r}$, for some monic irreducible polynomial $\psi'_r(y)$.
\item[c)] Let $(\tilde{\ty}^0)'=(\phi_1(x);\cdots;\la_{r-1},\phi'_r(x))$ be the extension of $\ty^0$ determined by the choice of $\phi'_r(x)$, and let $\ty'=((\tilde{\ty}^0)';\la'_r,\psi'_r(y))$. If $\la'_r> \la_r$, then $e'_r=f'_r=1$. If
$\la'_r=\la_r$, then $e'_r=e_r$, $f'_r=f_r$, and $\om_{r+1}^{\ty'}(f)=\om_{r+1}^{\ty}(f)$.
\end{itemize}
\end{theorem}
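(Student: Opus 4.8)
The plan is to reduce everything to a statement about the single $p$-adic factor $f_\ty(x)$ and then exploit a rigidity forced by the smallness of $\deg(\phi_r-\phi'_r)$. First I would decompose $f_\ty(x)=\prod_{\rho}g_\rho(x)$ into its monic irreducible factors in $\Z_p[x]$; each $g_\rho$ is divisible by $\ty$, in fact of type $\ty$. Applying the theorem of the polygon and the theorem of the residual polynomial of \cite{GMN} to each $g_\rho$ with respect to $\ty^0$ and the new representative $\phi'_r(x)$, the irreducibility of $g_\rho$ forces the principal polygon $(N'_r)^-(g_\rho)$ to be one-sided, of some slope $\mu_\rho<0$, with residual polynomial $R'_r(g_\rho)(y)$ a power of a single monic irreducible $\chi_\rho(y)\in\F_r[y]$; equivalently $g_\rho$ is divisible, with respect to $\phi'_r$, by exactly the branch of $\ty^0$ attached to $(\mu_\rho,\chi_\rho)$. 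By additivity of principal polygons and multiplicativity of residual polynomials, $(N'_r)^-(f_\ty)=\sum_\rho (N'_r)^-(g_\rho)$ and, slope by slope, the residual polynomial of $f_\ty$ is the corresponding product. Thus (a) and (b) reduce to proving that all the pairs $(\mu_\rho,\chi_\rho)$ coincide, say to a common value $(\la'_r,\psi'_r)$ with $\la'_r\ge\la_r$, and that the end point of $(N'_r)^-(f_\ty)$ is the end point of $N_r(f_\ty)$; once this is known, $f_\ty=f_{\ty'}$ for the branch $\ty'$ of (c).

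For the core rigidity I would work with a root $\alpha$ of each $g_\rho$ in a fixed algebraic closure of $\Qp$ (equivalently, transform the $\phi_r$-adic development of $f_\ty$ by the substitution $\phi_r=\phi'_r+\gamma$, $\gamma:=\phi_r-\phi'_r$, $\deg\gamma<m_r$), and write $\phi'_r(\alpha)=\phi_r(\alpha)-\gamma(\alpha)$. The decisive observation is that $\deg\gamma<m_r$, so both $v(\gamma(\alpha))$ and the residue $\overline{\gamma(\alpha)}$ at the relevant level are governed by the order $r-1$ data of $\ty^0$ alone: they are the same for every root of every $g_\rho$, and $v(\gamma(\alpha))$ lies in the value group generated by $v(p),v(\phi_1(\alpha)),\dots,v(\phi_{r-1}(\alpha))$. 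Since $N_r(f_\ty)$ is one-sided of slope $\la_r$, the value $v(\phi_r(\alpha))$ is likewise the same for all these roots. Now compare $v(\gamma(\alpha))$ with $v(\phi_r(\alpha))$. If $v(\gamma(\alpha))\ge v(\phi_r(\alpha))$, then $v(\phi'_r(\alpha))=v(\phi_r(\alpha))$ and the residue of $\phi'_r(\alpha)$ differs from that of $\phi_r(\alpha)$ by the fixed element $\overline{\gamma(\alpha)}\in\F_r$; hence $(N'_r)^-(f_\ty)=N_r(f_\ty)$, $\la'_r=\la_r$, and $R'_r(f_\ty)$ is the translate of $R_r(f_\ty)$ by $\overline{\gamma(\alpha)}$ — still a power of a single irreducible of degree $f_r$ — so $e'_r=e_r$, $f'_r=f_r$ and $\om_{r+1}^{\ty'}(f)=\om_{r+1}^{\ty}(f)$. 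If instead $v(\gamma(\alpha))<v(\phi_r(\alpha))$, then $v(\phi'_r(\alpha))=v(\gamma(\alpha))$ and the residue of $\phi'_r(\alpha)$ equals that of $-\gamma(\alpha)$, a single element of $\F_r$ not depending on the root; so $(N'_r)^-(f_\ty)$ is one-sided of slope $\la'_r>\la_r$ with trivial denominator, i.e. $e'_r=1$ (because $v(\gamma(\alpha))$ lies in the lower value group), and $R'_r(f_\ty)$ is a power of a linear polynomial, i.e. $f'_r=1$. In both cases the $(\mu_\rho,\chi_\rho)$ all coincide and $\la'_r\ge\la_r$.

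What remains is to rule out $v(\phi'_r(\alpha))>v(\phi_r(\alpha))$, i.e. that $\phi'_r$ is a strictly better approximation than $\phi_r$; this is where the hypothesis $e_rf_r>1$ is essential. The condition $e_rf_r>1$ says precisely that the level $(\la_r,\psi_r)$ of $\ty$ is \emph{not refinable}: $e_r>1$ means its slope has nontrivial denominator, $f_r>1$ means its residual polynomial has degree $>1$, and in either case no representative of degree $m_r$ can improve on $\phi_r$ for a factor of type $\ty$. Invoking this — part of the refinement analysis of \cite{GMN}, \cite[Ch.3]{montes}; when $e_rf_r=1$ it genuinely fails, as $f(x)=(x-2)^2+2^{2k}$ with $\phi_1(x)=x$ shows — gives $v(\phi'_r(\alpha))\le v(\phi_r(\alpha))$, so the dichotomy above is exhaustive and $\la'_r\ge\la_r$, which is the remaining part of (a). Finally, the end point: both $N_r(f_\ty)$ and $(N'_r)^-(f_\ty)$ start at abscissa $0$, end at abscissa $\om_r^{\ty^0}(f_\ty)=\deg f_\ty/m_r$ (a quantity built from order $r-1$ data only, hence the same for $\phi_r$ and $\phi'_r$), and have end ordinate $(\deg f_\ty/m_r)\,V_r$, where $V_r:=v_r(\phi_r)$ is an invariant of the type $\ty^0$, independent of the chosen representative. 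So the two end points agree, finishing (a); and the case analysis is exactly (b) and (c).

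The step I expect to be the real obstacle is the optimality input of the third paragraph: extracting from the formalism of \cite{GMN} the precise statement that $e_rf_r>1$ forbids a strictly steeper refined slope for $f_\ty$ (equivalently, that $\phi_r$ is an optimal degree-$m_r$ representative relative to $f_\ty$). The rest — that $v(\gamma(\alpha))$ and $\overline{\gamma(\alpha)}$ are order $r-1$ invariants, the translation of residual polynomials, and the invariance of $\om_r^{\ty^0}(f_\ty)$ and of $V_r$ — is routine, though it must be recast in the language of higher-order Newton polygons rather than of roots, which adds some bookkeeping.
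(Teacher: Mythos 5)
Your overall strategy matches the paper's: write $\phi'_r=\phi_r-\gamma$ with $\deg\gamma<m_r$, compare $v(\gamma(\alpha))$ with $v(\phi_r(\alpha))$ at a root $\alpha$ of $f_\ty$, exploit the fact that the valuation and the reduced value of a degree-$<m_r$ polynomial are rigid (the paper's Lemma \ref{M}), and then deduce (a)--(c) from the Theorems of the polygon and of the residual polynomial. That part is sound, and your observation that for $e_r>1$ the identity $v(\gamma(\alpha))=v(\phi_r(\alpha))$ is impossible because $v(\gamma(\alpha))$ lies in the coarser value group $(e_1\cdots e_{r-1})^{-1}\Z$ while $v(\phi_r(\alpha))$ does not is exactly the argument used in the paper (phrased there via $v_r$: if $v(M(\theta))=v(\phi_r(\theta))$ then $v_r(M)=v_r(\phi_r)+|\la_r|\in\Z$ forces $e_r=1$).

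The genuine gap is that you leave the heart of the theorem as a black box. You write that $e_rf_r>1$ means "no representative of degree $m_r$ can improve on $\phi_r$" and invoke this as known from \cite{GMN} and \cite[Ch.3]{montes}; but that is precisely the statement Theorem \ref{optimal} is proving, and the paper does not cite it but derives it. Concretely, you never rule out cancellation when $e_r=1$, $f_r>1$ and $v(\gamma(\alpha))=v(\phi_r(\alpha))$; in fact your opening dichotomy, ``if $v(\gamma(\alpha))\ge v(\phi_r(\alpha))$ then $v(\phi'_r(\alpha))=v(\phi_r(\alpha))$,'' is simply false in that boundary case. The paper's argument at this point is: by Lemma \ref{M} the reduced quotient $\overline{M(\theta)/\Pi(\theta)}$ lies in $\iota_\theta(\F_r)$; if cancellation occurred, the residue $\overline{\gamma_r(\theta)}$ of $\phi_r$ at this level would equal $-\overline{M(\theta)/\Pi(\theta)}\in\iota_\theta(\F_r)$; but $\overline{\gamma_r(\theta)}$ is a root of $\iota_\theta(\psi_r(y))$, which has degree $f_r$ over $\iota_\theta(\F_r)$, so $f_r=1$. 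You gesture at this by saying "$f_r>1$ means its residual polynomial has degree $>1$" but never connect that to $\overline{\gamma(\alpha)}\in\F_r$. Relatedly, you classify the residue lemma (``$v(\gamma(\alpha))$ and $\overline{\gamma(\alpha)}$ are order-$(r-1)$ invariants'') as routine and the optimality deduction as the real obstacle, whereas the paper's proof shows the weight is the other way around: Lemma \ref{M} is the substantive technical input (it needs the basis decomposition of $\Z_p[x]_{<m_r}$ and \cite[Lem.4.21, Lem.2.16]{GMN}), and once it is available the contradiction with $e_rf_r>1$ is a few lines. Your decomposition of $f_\ty$ into irreducible factors $g_\rho$ is a harmless detour; the paper works directly with a single root of $f_\ty$ and the constancy of $v(\phi'_r(\theta))$ across roots, then invokes the Theorem of the polygon and of the residual polynomial, which is lighter.
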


Therefore, if $e_{r}f_{r}>1$, the representative $\phi_r(x)$ is optimal for this branch $\ty$ of order $r$.
The absolute measures $\ind_{\ty^0,\phi'_r}(f)$, $\ind_{\ty^0,\phi_r}(f)$ are not the right invariant to compare because they incorporate the influence of other branches. If we center our attention on the branch $\ty$, there are two situations in which the choice of $\phi'_r(x)$ would lead us to be closer to the end of the analysis of this branch:
\begin{enumerate}
 \item $\ty$ is replaced by several branches of order $r$,
\item $\ty$ is replaced by $\ty'$, with $\om^{\ty'}_{r+1}(f)<\om^{\ty}_{r+1}(f)$.
\end{enumerate}
Items a), b) show that any choice of $\phi'_r(x)$ leads to replacing the branch $\ty$ of $\ty^0$, by a single branch $\ty'$. Also, if $\la'_r=\la_r$, we have $\om_{r+1}^{\ty'}(f)=\om_{r+1}^{\ty}(f)$; thus, replacing the type $\ty$ by $\ty'$ makes no difference at all in this case. 

However, if $|\la'_r|<|\la_r|$, we get a definitely worse approximation to the final solution, because
 $\om_{r+1}^{\ty}(f)=\om_r^{\ty'}(f)/(e_rf_r)$, by (\ref{distance}). Thus, the type $\ty$ is much nearer to be complete than $\ty'$. Also, if $f_r>1$, $\ind_\ty(f)$ will be probably bigger than $\ind_{\ty'}(f)$, because each point of integer coordinates below $N_{r+1}^-(f)$ will contribute with a higher weight, $f_0\cdots f_r$, to the $f$-index.

Note that a choice of the representative $\phi_r(x)$ of $\ty^0$ can be optimal for some branches $\ty$ and non-optimal for other branches. We shall see later that the condition $e_rf_r>1$ is also necessary for the optimality of $\phi_r(x)$ with respect to the branch $\ty$.

For the proof of Theorem \ref{optimal}, we need an auxiliary result. Fix a type $\ty^0$ of order $r-1$ and dividing $f(x)$. For any $\n=(n_0,\dots,n_{r-1})\in\N^r$, denote $\Phi(\n)(x)=p^{n_0}\phi_1(x)^{n_1}\dots\phi_{r-1}(x)^{n_{r-1}}$. Let $\t\in\overline{\Q}_p$ be a root of $f_{\ty^0}(x)$, and $L=\Q_p(\t)$. In \cite[(27)]{GMN}, an embedding $\F_r\hookrightarrow\F_L$, is defined by
\begin{equation}\label{embedding}
\iota_\t\colon \F_r\hookrightarrow \F_L,\quad z_0\mapsto \bar{\t},\ z_1\mapsto \gb1,\dots,\ z_{r-1}\mapsto \gb{r-1},
\end{equation}
for certain rational functions $\gamma_i(x)\in\Z(x)$ such that $v(\gamma_i(\t))=0$ \cite[Def.2.13,Cor.3.2]{GMN}.

\begin{lemma}\label{M}Let $\ty^0,\,\t,\,L$ be as above.
Let $M(x)\in\Z[x]$ be a polynomial of degree less than $m_r$. Suppose that $\n\in\N^r$ satisfies
$v(M(\t))=v(\Phi(\n)(\t))$. Then, the nonzero element $\overline{M(\t)/\Phi(\n)(\t)}\in\F_L^*$ belongs to $\iota_{\t}(\F_r)$, and the element $\iota_{\t}^{-1}(\overline{M(\t)/\Phi(\n)(\t)})\in\F_r^*$ is independent of the choice of $\t$.
\end{lemma}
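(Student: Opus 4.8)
The plan is to unwind the definition of the embedding $\iota_\t$ and reduce everything to the behaviour of the valuations $v_i$ and the residual maps $R_i$ on $M(x)$. First I would recall that the rational functions $\gamma_i(x)$ of \cite[Def.2.13]{GMN} are built so that $v(\gamma_i(\t))=0$ and the reductions $\gb{i}$ generate $\F_L$ as a sequence of extensions matching the tower $\F_1\subset\cdots\subset\F_r$; thus $\iota_\t(\F_r)$ is precisely the subfield of $\F_L$ generated by $\bar\t,\gb1,\dots,\gb{r-1}$, and it depends on $\t$ only through the labelling of these generators. The key input is \cite[Lem.2.17]{GMN} (or the discussion around $\om_i$ and $R_i$ in \S\ref{seccio-tipus}): for a polynomial $M(x)$ of degree $<m_r$, its $\phi_r$-adic expansion is trivial (it equals its own zeroth coefficient relative to $\phi_r$), so the higher-order valuation $v_r(M)$ and the residual polynomial $R_{r-1}(M)(y)\in\F_{r-1}[y]$ are computed directly from the $\phi_{r-1}$-adic expansion of $M$, with coefficients themselves of degree $<m_{r-1}$; this lets one set up an induction on $r$.

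The main steps, in order: (1) Reduce to the case $M(\t)/\Phi(\n)(\t)$ has valuation exactly $0$ — given — and observe that multiplying $M$ by a suitable monomial $\Phi(\n')$ and adjusting $\n$ does not change the claim, so one may normalise. (2) Use the theory of the $v_i$ to compare $v(M(\t))$ with the data read off from the Newton-polygon machinery: since $\deg M<m_r$, the value $v(M(\t))$ is pinned down by $v_r(M)$, which in turn is an integer combination (with the fixed coefficients $h_i,e_i,\ell_i,\ell'_i$ of $\ty^0$) of the ``slope data'', and this forces $\n$ to be essentially determined modulo the obvious degrees of freedom $p^{e_r}\leftrightarrow\phi_i$-shifts. (3) Identify $\overline{M(\t)/\Phi(\n)(\t)}$: by \cite[Cor.3.2]{GMN}, $\overline{\Phi(\n)(\t)/p^{v(\Phi(\n)(\t))}}$ lies in $\iota_\t(\F_r)$ and is a monomial in the $\gb{i}$, hence it suffices to show $\overline{M(\t)/p^{\,v(M(\t))}}\in\iota_\t(\F_r)$; this is where $R_{r-1}(M)(y)$ enters — its reduction, evaluated at $z_{r-1}$ via $\iota_\t$, yields exactly $\overline{M(\t)/\Phi(\n)(\t)}$ up to a factor already known to lie in $\iota_\t(\F_r)$ by the inductive hypothesis applied to the coefficients of the $\phi_{r-1}$-expansion of $M$. (4) For independence of $\t$: every quantity produced in step (3) — the exponent vector $\n$, the residual polynomial $R_{r-1}(M)(y)$, and the slope/valuation data — is defined purely in terms of $f$, $p$ and the combinatorial data of $\ty^0$, with no reference to the chosen root $\t$ of $f_{\ty^0}$; since $\iota_\t^{-1}$ just relabels $\gb{i}\mapsto z_i$, the resulting element of $\F_r^*$ is the same for every $\t$.

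The hard part will be step (3): making precise the claim that $\iota_\t^{-1}(\overline{M(\t)/\Phi(\n)(\t)})$ can be computed from the residual polynomial $R_{r-1}(M)(y)$ without circularity. The subtlety is that $R_{r-1}(M)$ is itself a polynomial in $\F_{r-1}[y]$ whose coefficients are obtained from lower-order residuals of the $\phi_{r-1}$-coefficients of $M$, so one genuinely needs the induction on the order $r$ set up carefully, with the base case $r=1$ (where $M\in\Z[x]$ has degree $<m_1=\deg\phi_1$, and $\overline{M(\t)/p^{v(M)}}$ is literally $\psi_0$-reduction of $M$ evaluated at $\bar\t=z_0$) handled separately. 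I would also need to check the compatibility of the valuation-matching hypothesis $v(M(\t))=v(\Phi(\n)(\t))$ down the tower, i.e.\ that it propagates to the corresponding hypothesis for the coefficients at level $r-1$; this is a bookkeeping argument using that $N_r(M)$ has a single relevant vertex because $\deg M<m_r$.
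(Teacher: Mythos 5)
Your plan takes a genuinely different route from the paper, which uses the full multi-index $\Phi$-adic development of $M(x)$ in one shot: writing $M(x)=\sum_{\j\in J}a_\j\,x^{j_0}\Phi(0,j_1,\dots,j_{r-1})(x)$ with $J$ ranging over all exponent vectors with $0\le j_i<e_if_i$, invoking the key valuation lower bound $v(a_\j)\ge v(M(\t))-v(\Phi(0,j_1,\dots,j_{r-1})(\t))$ from \cite[Lem.4.21]{GMN}, isolating the terms where equality holds, and then expressing each surviving $\Phi(\j'-\n)(\t)$ as a monomial in the $\gamma_i(\t)$ via \cite[Lem.2.16]{GMN}. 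Your proposal instead sets up an induction on the order $r$ through the single-level $\phi_{r-1}$-adic expansion and the residual polynomial $R_{r-1}(M)(y)$; this is in the spirit of \cite[Prop.3.5]{GMN} and is conceptually defensible, but it requires exactly the bookkeeping you flag in your final sentence, and you have not supplied a substitute for the valuation lower bound that drives the paper's argument.

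There is also a concrete gap in your step (3). You reduce to showing $\overline{M(\t)/p^{\,v(M(\t))}}\in\iota_\t(\F_r)$ on the grounds that $\overline{\Phi(\n)(\t)/p^{\,v(\Phi(\n)(\t))}}$ lies in $\iota_\t(\F_r)$. But $v$ here is the $p$-adic valuation normalised by $v(p)=1$, so on $L=\Q_p(\t)$ it takes values in $\frac1{e(L/\Q_p)}\Z$; in particular $v(M(\t))$ and $v(\Phi(\n)(\t))$ are generally not integers (see e.g.\ the identity $v(\phi_r(\t))=(v_r(\phi_r)+|\la_r|)/e_1\cdots e_{r-1}$ used elsewhere in the paper). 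Hence $p^{v(M(\t))}$ is not an element of $L$, and the quotients you propose to reduce are not defined. This is not a cosmetic slip: the whole point of the monomial $\Phi(\n)(\t)$ is that it is an element of $L$ with the same fractional valuation as $M(\t)$, built from $p$ and the $\phi_i(\t)$, so that the ratio is a genuine unit whose reduction makes sense. You cannot replace it by an integer power of $p$, and \cite[Cor.3.2]{GMN} (which asserts $v(\gamma_i(\t))=0$ for the specific unit-valued rational functions $\gamma_i$) does not license that substitution. To repair the plan you must carry the $\Phi(\n)$ normalisation through the induction, matching exponent vectors $\n_i$ for the $\phi_{r-1}$-adic coefficients $a_i(x)$ level by level; doing this cleanly essentially reconstructs the paper's multi-index computation.
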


\begin{proof}
Let $J:=\{\j=(j_0,\dots,j_{r-1})\in\N^r\tq 0\le j_i<e_if_i, \mbox{ for }0\le i<r\}$, where we take $e_0=1$ by convention. Since $\deg M<m_r$, this polynomial can be written in a unique way as
$$M(x)=\sum_{\j=(j_0,\dots,j_{r-1})\in J}a_\j x^{j_0}\Phi(0,j_1,\dots,j_{r-1})(x),$$
for certain integers $a_\j$. By \cite[Lem.4.21]{GMN}, we have
$$v(a_\j)\ge \delta_\j:=v(M(\t))-v(\Phi(0,j_1,\dots,j_{r-1})(\t)),$$ for all $\j\in J$. Let
$J_0=\{\j\in J\tq v(a_\j)=\delta_\j\}$. Denote $b_\j=a_\j/ p^{\delta_\j}$, and $\j'=(\delta_\j,j_1,\dots,j_{r-1})$. We can write $M(x)$ as
$$
M(x)=\sum_{\j\in J_0}b_\j x^{j_0}\Phi(\j')(x)+N(x),
$$
where $N(x)\in\Z[x]$ satisfies $v(N(\t))>v(M(\t))$. Now,
$$
\dfrac{M(x)}{\Phi(\n)(x)}=\sum_{\j\in J_0}b_\j x^{j_0}\Phi(\j'-\n)(x)+\dfrac{N(x)}{\Phi(\n)(x)}.
$$
By hypothesis, $v(\Phi(\j'-\n)(\t))=0$. Since $\om_{r+1}(\Phi(\j'-\n))=0$ \cite[Prop.2.15]{GMN}, we have
$v_r(\Phi(\j'-\n)(x))=0$, by \cite[Prop.2.9]{GMN}. By \cite[Lem.2.16]{GMN}, there exists a sequence $i_1,\dots,i_{r-1}$ of integers, that depend only on $\j'$ and $\n$, such that
$$
\Phi(\j'-\n)(x)=\gamma_1(x)^{i_1}\cdots \gamma_{r-1}(x)^{i_{r-1}}.
$$
Hence, the element of $\F_L^*$,
$$
\overline{M(\t)/\Phi(\n)(\t)}=\sum_{\j\in J_0}\bar{b}_\j \bar{\t}^{j_0} \overline{\Phi(\j'-\n)(\t)},
$$belongs to $\iota_\t(\F_r)$. Since all the ingredients $a_\j$, $\delta_\j$, $i_1,\dots,i_{r-1}$ etc. depend only on $\ty^0$, the element $\iota_{\t}^{-1}(\overline{M(\t)/\Pi(\t)})\in\F_r^*$ is independent of $\t$.
\end{proof}

\begin{proof}[Proof of Theorem \ref{optimal}]
Let $\t\in\overline{\Q}_p$ be now a root of $f_{\ty}(x)$, and $L=\Q_p(\t)$.
Let us prove first that $v(\phi_r(\t))\ge v(\phi'_r(\t))$. In fact, let us show that $v(\phi_r(\t))<v(\phi'_r(\t))$ implies $e_r=f_r=1$. Let us write $\phi'_r(x)=\phi_r(x)+M(x)$, for certain polynomial $M(x)\in\Z[x]$, of degree less than $m_r$. If $v(\phi_r(\t))<v(\phi'_r(\t))$, then $v(M(\t))=v(\phi_r(\t))=(v_r(\phi_r)+|\la_r|)/e_1\cdots e_{r-1}$, by the Theorem of the polygon \cite[Thm.3.1]{GMN}. Since $\deg M<m_r$ we have $\om_{r+1}(M)=0$ \cite[Lem.2.2]{GMN}, and \cite[Prop.2.9]{GMN} shows that $v_r(M)=e_1\cdots e_{r-1}v(M(\t))=v_r(\phi_r)+|\la_r|$; hence $\la_r$ is an integer, and $e_r=1$.

We use now some other rational functions introduced in
\cite[Def.2.13]{GMN}, and the identity $v_r(\phi_r)=e_{r-1}f_{r-1}v_r(\phi_{r-1})$ \cite[Thm.2.11]{GMN}:
$$
\gamma_r(x)=\dfrac{\Phi_r(x)}{\pi_r(x)^{h_r}}=\dfrac{\phi_r(x)}{\pi_r(x)^{h_r}\pi_{r-1}(x)^{v_r(\phi_r)/e_{r-1}}}.
$$
Denote $\Pi(x)=\pi_r(x)^{h_r}\pi_{r-1}(x)^{v_r(\phi_r)/e_{r-1}}$. Since $v(\gamma_r(\t))=0$, we have $v(\Pi(\t))=v(\phi_r(\t))=v(M(\t))$. By \cite[(17)]{GMN}, we can write $\Pi(x)=\Phi(\n)(x)$, for some $\n\in\N^r$ that depends only on $\ty^0$. Now, if we reduce modulo $\mathfrak{m}_L$ the identity
$$
\dfrac{\phi'_r(\t)}{\Pi(\t))}=\gamma_r(\t)+\dfrac{M(\t)}{\Pi(\t))},
$$
Lemma \ref{M} shows that $\gb{r}=-\overline{M(\t)/\Pi(\t))}$ belongs to $\iota_\t(\F_r)$. Since $\gb{r}$ is a root of $\iota_\t(\psi_r(y))$ \cite[Prop.3.5]{GMN}, we get $f_r=1$.

We prove now a) of the theorem. If we show that $v(\phi'_r(\t))$ takes the same value for all the roots $\t$ of $f_\ty(x)$, then, by the Theorem of the polygon, $N'_r(f_\ty)$ will be one-sided with slope
\begin{align*}
\la'_r=v_r(\phi'_r)-e_1\cdots e_{r-1} v(\phi'_r(\t))&=v_r(\phi_r)-e_1\cdots e_{r-1} v(\phi'_r(\t))\\&\ge
v_r(\phi_r)-e_1\cdots e_{r-1} v(\phi_r(\t))=\la_r.
\end{align*}
Now, if $v(\phi'_r(\t))=v(\phi_r(\t))$ for all $\t$, then the value $v(\phi'_r(\t))$ is constant, because the value $v(\phi_r(\t))$ is constant. Note that $v(M(\t))=v_r(M)/e_1\cdots e_{r-1}$ is independent of $\t$.
If there is one $\t_0$ with $v(\phi'_r(\t_0))<v(\phi_r(\t_0))$, then $v(M(\t_0))=v(\phi'_r(\t_0))<v(\phi_r(\t_0))$. Hence, $v(M(\t))<v(\phi_r(\t))$ for all $\t$, because both expressions are independent of $\t$. Thus, $v(\phi'_r(\t))=v(M(\t))$ is constant too. Finally, the polygons $N_r(f_\ty)$, $N'_r(f_\ty)$, have both end point $(\om_r^{\ty^0}(f),v_r^{\ty^0}(f))$.

We prove items b), c) simultaneously. Suppose first that $\la'_r>\la_r$; then, the Theorem of the polygon shows that $v(\phi'_r(\t))<v(\phi_r(\t))$. Arguing as above, this implies $e'_r=1$ and $\overline{\gamma'_r(\t)}\in\iota_\t(\F_r)$, with $\eta:=\iota_\t^{-1}(\overline{\gamma'_r(\t)})\in\F_r^*$ independent of $\t$. By the Theorem of the residual polynomial \cite[Thm.3.7]{GMN}, if $\t$ runs on all the roots of $f_\ty(x)$, then $\overline{\gamma'_r(\t)}$ runs on all the roots of the irreducible factors of $R'_r(f_\ty)(y)$. Hence, $R'_r(f_\ty)(y)\sim(y-\eta)^{a'_r}$, and $f'_r=1$.

Suppose now $\la'_r=\la_r$, so that $v(\phi_r(\t))=v(\phi'_r(\t))$, by the Theorem of the polygon. We distinguish two cases. If $v(M(\t))=v(\phi_r(\t))$, arguing as above we get $e_r=1$
and $\gb{r}=\overline{\gamma'_r(\t)}+\iota_\t(\eta)$, for some $\eta\in\F_r^*$ which is independent of $\t$. In this case, $R'_r(f_\ty)(y)\sim R_r(f_\ty)(y+\eta)$ is a nonzero constant times the power of an irreducible polynomial of degree $f'_r=f_r$. If $v(M(\t))>v(\phi_r(\t))$, then $\phi_r(\t)^{e_r}=\phi'_r(\t)^{e_r}+N(x)$, where $v(N(\t))>v(\phi_r(\t)^{e_r})$. Arguing as above, we get $\gb{r}=\overline{\gamma'_r(\t)}$, and this implies $R'_r(f_\ty)(y)\sim R_r(f_\ty)(y)$ and $f'_r=f_r$. This implies
$\om_{r+1}^{\ty'}(f)=\om_{r+1}^{\ty}(f)$ too, because $e_rf_r\om_{r+1}^{\ty}(f)=e'_rf'_r\om_{r+1}^{\ty'}(f)$ by (\ref{distance}).
\end{proof}

\subsection{The process of refinement}
What can be said when $e_{r}=f_{r}=1$?  In this case, we enlarge the type $\ty^0$ to an order $r$ type $\ty=(\tilde{\ty}^0;-h_r,y-\eta)$, and we find a representative $\phi_{r+1}(x)$ of $\ty$, of degree $m_{r+1}=m_r$. Let us emphasize a crucial observation.

\begin{rem}
The polynomial $\phi'_r(x):=\phi_{r+1}(x)$ can be taken too as a representative of $\ty^0$.
\end{rem}
In fact, $\phi'_r(x)$ has type $\ty^0$, and $\om_r(\phi'_r)=\deg \phi'_r/m_r=1$. We shall show that $\phi'_r(x)$ is always a better representative of $\ty^0$ than $\phi_{r}(x)$; thus, in this case $\phi_r(x)$ is never optimal.

The comparison betwen these two representatives is done by means of the following affine transformation:
$$
\HH: \R^2 \longrightarrow \R^2, \qquad \HH(x,y)=(x,y-h_r x).
$$
Note that the vertical lines of the plane are invariant under this transformation, and $\HH$ acts as a translation on them. Also, $\HH$ preserves points of integer coordinates. If $S$ is a side of negative slope, of length $\ell$, slope $\la$ and degree $d$, then $\HH(S)$ is a side of length $\ell$, slope $\la-h_r$ and degree $d$.

\begin{definition}
Let $h$ be a positive integer, $\ty$ a type of order $r-1$, $\phi_r(x)$ a representative of $\ty$, and $P(x)\in\Z[x]$ a nonzero polynomial that is not divisible by $\phi_r(x)$.

If $N$ is a principal polygon, we denote by $N^h$ the part of $N$ formed by the sides of slope less than $-h$.
We define
\begin{equation}\label{cuttingind}\ind_\ty^h(P):=\ind_{\ty,\phi_r}^h(P):=f_0\cdots f_{r-1}\left(\ind(N_r^h(P))-\frac 12h\ell(\ell-1)\right),
\end{equation}
where $\ell=\ell(N_r^h(P))$, and the Newton polygon is taken with respect to $\ty$ and $\phi_r(x)$.
\end{definition}
This number $\ind_\ty^h(P)$ is equal to $f_0\cdots f_{r-1}$ times the number of points of integer coordinates in the region of the plane determined by the points that lie below (or on) $N_r^{h}(P)$, strictly above the line $L_{-h}$ of slope $-h$ that passes through the last point of the polygon, and strictly beyond the vertical axis. The term $h\ell(\ell-1)/2$ takes care of the points of integer coordinates in the triangle determined by  $L_{-h}$,  the vertical axis and the horizontal line that passes through the last point of $N_r^{h}(P)$.

Let us introduce some notation. Let $N_{r+1}(-)$, denote the Newton polygon with respect to $\ty$ and $\phi_{r+1}(x)$. Let
$N'_r(-)$ denote the Newton polygon with respect to $\ty^0$ and $\phi'_r(x)=\phi_{r+1}(x)$. For any negative rational number $\la$, let $R'_{\la}(-)(y)\in\F_r[y]$ denote the residual polynomial in order $r$, with respect to $\ty^0$, $\phi'_r(x)$, $\la$, and let $R_{\la}(-)(y)\in\F_r[y]$ denote the residual polynomial in order $r+1$, with respect to $\ty$, $\phi_{r+1}(x)$, $\la$.

\begin{proposition}\label{refinement}
Let $\ty^0\in\ty_{r-1}(f)$ be a non-complete type of order $r-1$, and let $\ty=(\tilde{\ty}^0;-h_r,y-\eta)\in\ty_r(f)$ be a branch of $\ty^0$ such that $e_r=f_r=1$. Let $\phi_{r+1}(x)$ be a representative of $\ty$, and let $\phi'_r(x)=\phi_{r+1}(x)$ be the same polynomial, considered as a representative of $\ty^0$. Let $P(x)\in\Z_p[x]$ be a nonzero polynomial. Then, with the previous notations,
\begin{itemize}
\item[a)]  $(N')_r^{h_r}(P)=\HH(N_{r+1}^-(P))$,
\item[b)] $\ind_\ty(P)=\ind_{\ty^0,\phi'_r}^h(P)$,
\item[c)] There exists a nonzero constant $\epsilon\in\F_r$ that depends only on $\ty^0$, such that, for any $\la=-h/e$, with $h,e$ positive coprime integers, we have $R_{\la}(P)(y)=\epsilon^sR'_{\la-h_r}(P)(\epsilon^ey)$, where $s$ is the initial abscissa of the $\la$-component of $N_{r+1}^-(P)$ \cite[\S1.1]{GMN}.
\end{itemize}
\end{proposition}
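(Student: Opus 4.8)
The plan is to handle all three parts at once through the single $\phi_{r+1}$-adic development
$$
P(x)=\sum_{j\ge 0}a_j(x)\,\phi_{r+1}(x)^j,\qquad \deg a_j<m_{r+1}=m_r,
$$
which is legitimate because $e_rf_r=1$ forces $m_{r+1}=m_r$, so that $\phi'_r(x)=\phi_{r+1}(x)$, and this same development serves to build both $N_{r+1}(P)$ (order $r+1$, with respect to $\ty$ and $\phi_{r+1}$) and $N'_r(P)$ (order $r$, with respect to $\ty^0$ and $\phi'_r$). Two simplifications will be used throughout: since $\psi_r(y)=y-\eta$ has degree $f_r=1$ one has $\F_{r+1}=\F_r$ canonically, with $z_r=\eta$, so $R_\la(P)$ and $R'_{\la-h_r}(P)$ already live in the same ring $\F_r[y]$; and since $e_r=1$, each coefficient $a_j$ (being of degree $<m_r$) has $\om_{r+1}(a_j)=0$ \cite[Lem.2.2]{GMN}, whence the relevant valuations agree, $v_{r+1}(a_j)=v_r(a_j)$ \cite[Def.2.5]{GMN}. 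Therefore the only discrepancy between the order-$r$ and the order-$(r+1)$ pictures of $P$ is the contribution of the powers of $\phi_{r+1}$; and the defining relation between $v_r$ and $v_{r+1}$ (\cite[Def.2.5, Thm.2.11]{GMN}), together with the fact that $\phi_{r+1}$ is a representative of $\ty=(\tilde{\ty}^0;-h_r,y-\eta)$, gives $v_{r+1}(\phi_{r+1})=v_r(\phi_{r+1})+h_r$, so that the term $a_j\phi_{r+1}^j$ contributes to $N_{r+1}(P)$ and to $N'_r(P)$ points with the same abscissa $j$ and ordinates differing by exactly $h_rj$. Hence $N'_r(P)$ and $\HH(N_{r+1}(P))$ are the lower convex hulls of one and the same finite point set, so $N'_r(P)=\HH(N_{r+1}(P))$. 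Since $\HH(x,y)=(x,y-h_rx)$ adds $-h_r$ to every slope and fixes the vertical axis, it carries the part of $N_{r+1}(P)$ of negative slope --- which is $N_{r+1}^-(P)$ --- onto the part of $N'_r(P)$ of slope $<-h_r$ --- which is $(N')_r^{h_r}(P)$; this is a).

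For b), I would argue by a lattice-point count. By the description following the definition of $\ind_\ty^{h}$, the quantity $\ind_{\ty^0,\phi'_r}^{h_r}(P)$ equals $f_0\cdots f_{r-1}$ times the number of lattice points strictly to the right of the vertical axis, on or below $(N')_r^{h_r}(P)$, and strictly above the line of slope $-h_r$ through its last vertex. The shear $\HH$ is a bijection of $\Z^2$ that fixes the vertical axis, preserves the relation ``lying on or below a polygon'' (it is a vertical translation along each vertical line, applied at once to points and to polygons), carries $(N')_r^{h_r}(P)$ back to $N_{r+1}^-(P)$ by a), and carries the slope-$(-h_r)$ line through the last vertex of $(N')_r^{h_r}(P)$ to the horizontal line through the last vertex of $N_{r+1}^-(P)$. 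Hence the lattice-point set above is the $\HH$-image of the one counted by $\ind(N_{r+1}^-(P))$, so $\ind_{\ty^0,\phi'_r}^{h_r}(P)=f_0\cdots f_{r-1}\,\ind(N_{r+1}^-(P))$; since $f_r=1$ this is $f_0\cdots f_r\,\ind(N_{r+1}^-(P))=\ind_\ty(P)$.

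For c), fix $\la=-h/e$ in lowest terms, so that $\la-h_r=-(h+eh_r)/e$ is again in lowest terms. By a), the $\la$-side of $N_{r+1}^-(P)$ and the $(\la-h_r)$-side of $(N')_r^{h_r}(P)$ are $\HH$-images of each other; in particular they pass through lattice points of the same abscissae $s,s+e,\dots,s+de$ (where $d$ is the common degree of the side), arising from the same coefficients $a_s,a_{s+e},\dots,a_{s+de}$. By definition, the $k$-th coefficient of $R_\la(P)(y)$, resp. of $R'_{\la-h_r}(P)(y)$, is the image in $\F_{r+1}=\F_r$ of $a_{s+ke}$ divided by a normalising monomial in $p,\phi_1,\dots,\phi_r$ and by a power of $z_r=\eta$, resp. by a monomial in $p,\phi_1,\dots,\phi_{r-1}$ and a power of $z_{r-1}$. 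Comparing the two normalisations --- whose only difference consists of the extra powers of $\phi_r$, of the auxiliary function $\pi_r$, and of $\eta$ occurring in the order-$(r+1)$ one, and which grow linearly in $k$ --- and invoking Lemma \ref{M} to ensure that all residues involved lie in $\F_r$ and are independent of the auxiliary root, one produces a nonzero $\epsilon\in\F_r^\ast$, built from $\eta$ and the residues of the $\ty^0$-data $\pi_r,\gamma_r$ (hence depending only on $\ty^0$), such that the $k$-th coefficient of $R_\la(P)(y)$ equals $\epsilon^{\,s+ke}$ times the $k$-th coefficient of $R'_{\la-h_r}(P)(y)$; summing over $k$ gives $R_\la(P)(y)=\epsilon^sR'_{\la-h_r}(P)(\epsilon^ey)$, and when $\la$ is not a slope of $N_{r+1}^-(P)$ the identity is the analogous one for the two constant residual polynomials at the vertex of abscissa $s$. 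I expect this normalisation bookkeeping in c) to be the crux: one must verify that the discrepancy between the order-$(r+1)$ and order-$r$ normalisations is exactly one power of a single unit $\epsilon$ per unit step in abscissa along the side, and that $\epsilon$ is independent of the side and of $P$, and for this the explicit construction of the representative $\phi_{r+1}$ of $\ty$ and of the rational functions $\pi_r,\gamma_r$ from \cite{GMN} must be used in detail.
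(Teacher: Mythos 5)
Parts a) and b) of your proposal follow essentially the same route as the paper: the $\phi_{r+1}$-adic development serves simultaneously as the $\phi'_r$-adic development because $m_{r+1}=m_r$, the coefficients $a_j$ (of degree $<m_r$) satisfy $v_{r+1}(a_j)=v_r(a_j)$, and $v_{r+1}(\phi_{r+1})=v_r(\phi_{r+1})+h_r$ shears the polygon by $\HH$; and b) then follows by transporting the lattice-point count by the same shear. So far this is correct and matches the paper.

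For part c), however, what you have is a sketch with a genuine gap, and you acknowledge it yourself. You correctly identify the structural reduction: one must show that the residual coefficients satisfy $c_i=\epsilon^i c'_i$ for a fixed unit $\epsilon$ depending only on $\ty^0$, after which the identity $R_\la(P)(y)=\epsilon^s R'_{\la-h_r}(P)(\epsilon^e y)$ follows formally. But you do not carry out the computation, and your heuristic description of $\epsilon$ is off: you say it is ``built from $\eta$ and the residues of the $\ty^0$-data $\pi_r,\gamma_r$,'' whereas in fact it involves none of $\eta$, $\pi_r$ or $\gamma_r$. The paper's calculation exploits $e_r=1$ to choose $\ell_r=0$, which kills the power of $z_r$ (so $\eta$ plays no role), and notes that $N_r(a_i)$ is the single point $(0,v_r(a_i))$, making $R_r(a_i)$ a constant equal to $z_{r-1}^{t_{r-1}(0)}R_{r-1}(a_i)(z_{r-1})$; then comparing the twisting exponents $t_{r-1}(0)$ and $t'_{r-1}(i)$ yields the explicit value $\epsilon=(z_{r-1})^{\ell_{r-1}v_r(\phi_r)/e_{r-1}}$, a single power of $z_{r-1}$ whose exponent grows linearly in $i$ through the factor $i\,v_r(\phi_r)$. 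This careful bookkeeping with the quantities $t_r(i)$, $t_{r-1}(0)$, $t'_{r-1}(i)$ is precisely the step your argument stops short of, and it is not supplied by Lemma \ref{M}, which you invoke but which addresses a different comparison (optimality of $\phi_r$ versus $\phi'_r$, not the change of order $r\leftrightarrow r+1$ at a fixed polynomial).
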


\begin{proof}
We shall denote by $v_{r+1}$ the $p$-adic valuation attached to $\ty$, and by $v_r$ the  $p$-adic valuation attached to $\ty^0$.
Consider the $\phi_{r+1}$-adic deve\-lopment of $P(x)$, which is simultaneously its $\phi'_r$-adic development:
$$
P(x)=\sum_{0\le i}a_i(x)\phi_{r+1}(x)^i=\sum_{0\le i}a_i(x)\phi'_r(x)^i.
$$
For any $0\le i$, denote $u_i=v_{r+1}(a_i\phi_{r+1}^i)$, $u_i'=v_r(a_i(\phi'_r)^i)$, so that the points $(i,u_i)$ determine the Newton polygon $N_{r+1}(P)$, and the points  $(i,u'_i)$ determine the Newton polygon $N'_r(P)$.
Since $\deg a_i<m_r=m_{r+1}$,
$$
v_r(a_i)=e_1\cdots e_{r-1} v(a_i(\t))=e_1\cdots e_{r-1}e_r v(a_i(\t))=v_{r+1}(a_i),
$$
where $\t$ is any root of $f_\ty(x)$ in $\overline{\Q}_p$. By \cite[Prop.2.7+Thm.2.11]{GMN},
$$
v_{r+1}(\phi_{r+1})=f_re_rv_{r+1}(\phi_r)=v_{r+1}(\phi_r)=e_rv_r(\phi_r)+h_r=v_r(\phi_r)+h_r=v_r(\phi'_r)+h_r.
$$
This proves a), because $u_i=v_{r+1}(a_i\phi_{r+1}^i)=v_r(a_i(\phi'_r)^i)+ih_r=u'_i+ih_r$. 

Item b) is an immediate consequence, because $\HH$ transforms the horizontal line that passes through the last point of $N_{r+1}^-(P)$ into the line $L_{-h_r}$ of slope $-h_r$ that passes through the last point of $N_r^{h}(P)$ (cf. the figure below).

Let us prove c). The definition of the residual coefficients and the residual polynomials is given in \cite[Defs.2.20-2.21]{GMN}. Denote $N'=(N')_r^{h_r}(P)$. To every integer abscissa, $0\le i\le \ell(N')$, one attaches a residual coefficient $c_i$ of $N_{r+1}^-(P)$,
and a residual coefficient $c'_i$ of $N'$, given by
$$c_i=\left\{\begin{array}{ll}
 z_r^{t_r(i)}R_r(a_i)(z_r),&\mbox{ if $(i,u_i)$ lies on }N_{r+1}^-(P),\\
0,&\mbox{ otherwise}.
\end{array}
\right.
$$
$$c'_i=\left\{\begin{array}{ll}
 z_{r-1}^{t'_{r-1}(i)}R_{r-1}(a_i)(z_{r-1}),&\mbox{ if $(i,u'_i)$ lies on }N',\\
0,&\mbox{ otherwise}.
\end{array}
\right.
$$
By a), the points $(i,u_i)$, $(i,u'_i)$, lying on the respective polygons have the same abscissas. Suppose that $i$ is such an abscissa. For $j=r,r-1$, denote by $s_j(a_i)$ the initial abscissa of the $\la_j$-component of $N_j(a_i)$ \cite[\S1.1]{GMN}. Since $e_r=1$, we can choose $\ell_r=0$. Since $\deg(a_i)<m_r$, the polygon $N_r(a_i)$ is reduced to the point $(0,v_r(a_i))$. This implies that $t_r(i)=(s_r(a_i)-\ell_ru_i)/e_r=0$; also, $R_r(a_i)(y)$ is a constant, equal to $z_{r-1}^{t_{r-1}(0)}R_{r-1}(a_i)(z_{r-1})$. The exponents $t'_{r-1}(i)$, and $t_{r-1}(0)$ are given by
$$
t'_{r-1}(i)=(s_{r-1}(a_i)-\ell_{r-1}v_r(a_i(\phi'_r)^i))/e_{r-1}, \
t_{r-1}(0)=(s_{r-1}(a_i)-\ell_{r-1}v_r(a_i))/e_{r-1}.
$$
Hence, $c_i=\epsilon^ic'_i$, where $\epsilon=(z_{r-1})^{\ell_{r-1}v_r(\phi_r)/e_{r-1}}$. Since $R_{\la}(P)(y)=c_s+c_{s+e}y+\cdots+c_{s+de}y^d$, $R'_{\la-h_r}(P)(y)=c'_s+c'_{s+e}y+\cdots+c'_{s+de}y^d$, we get
$R_{\la}(P)(y)=\epsilon^sR'_{\la-h_r}(P)(\epsilon^ey)$.
\end{proof}
\begin{center}
\setlength{\unitlength}{5.mm}
\begin{picture}(11,11)
\put(-.2,8.8){$\bullet$}\put(3.85,4.85){$\bullet$}\put(6.85,3.85){$\bullet$}
\put(4,5){\line(-1,1){4}}\put(7,4.03){\line(-3,1){3}}
\put(4,5.03){\line(-1,1){4}}\put(7,4){\line(-3,1){3}}
\put(3.85,2.85){$\bullet$}\put(6.85,.35){$\bullet$}
\put(4,3){\line(-2,3){4}}\put(7,.5){\line(-6,5){3}}
\put(4,3.03){\line(-2,3){4}}\put(7,.502){\line(-6,5){3}}
\put(-1,0){\line(1,0){9}}\put(0,-1){\line(0,1){11}}
\multiput(-.1,4)(.25,0){32}{\hbox to 1pt{\hrulefill }}
\multiput(-.1,.55)(.25,0){32}{\hbox to 1pt{\hrulefill }}
\multiput(-.1,4)(.2,-.1){35}{\mbox{\begin{scriptsize}.\end{scriptsize}}}
\multiput(7,-.1)(0,.25){21}{\vrule height1pt}
\put(2,2.2){\begin{scriptsize}$L_{-h_r}$\end{scriptsize}}
\put(-.4,-.6){\begin{scriptsize}$0$\end{scriptsize}}
\put(-2.6,4){\begin{scriptsize}$v_{r+1}(P)$\end{scriptsize}}
\put(6,-.6){\begin{scriptsize}$\om_{r+1}(P)$\end{scriptsize}}
\put(2.4,7){\begin{scriptsize}$N_{r+1}^-(P)$\end{scriptsize}}
\put(1,5.8){\begin{scriptsize}$N'$\end{scriptsize}}
\end{picture}
\end{center}\bigskip\bigskip

Proposition \ref{refinement}, applied to $P(x)=f(x)$, shows that
 $\phi'_r(x)$ is a better representative of $\ty^0$ than $\phi_r(x)$, in what the analysis of the branch $\ty$ concerns.
Actually, we have proved something stronger: the information obtained by applying to $\ty$ the Basic algorithm in order $r+1$, is exactly the same information obtained by applying the Basic algorithm to $\ty^0$ in order $r$, as long as we take $\phi'_r(x)$ as a representative, we analyze $(N')^{h_r}_r(f)$ instead of the whole $(N')_r^-(f)$, and we replace $\ind_\ty(f)$ by $\ind^{h_r}_{\ty^0,\phi'_r}(f)$. By ``to obtain the same information" we mean to obtain the same number of new branches, the same (decreased) value of $\om_{r+1}(f)$ for each of them, and to cover the same distance to the end of the analysis of the branch $\ty$.

Moreover, let $\la\in\Q^-$ be the slope of a side of $N_{r+1}^-(f)$, $\psi(y)$ a monic irreducible factor of $R_\la(f)(y)$, and $\ty''=(\tilde{\ty}^0;-h_r,\phi_{r+1};\la,\psi(y))$ the corresponding branch of $\ty$ of order $r+1$. By Proposition \ref{refinement}, this branch mirrors a branch of order $r$, $\ty'=((\tilde{\ty}^0)';\la-h_r,c^{\deg\psi}\psi(c^{-1}y))$, of $\ty^ 0$, with respect to the choice of $\phi'_r(x)$ as a representative.

\begin{corollary}\label{feq}
The types $\ty'$ and $\ty''$ are $f$-equivalent.
\end{corollary}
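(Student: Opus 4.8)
The plan is to verify the two constituents of $f$-equivalence directly: that $\ty'$ and $\ty''$ both divide $f(x)$, and that $f_{\ty'}(x)=f_{\ty''}(x)$ (Definition \ref{type} states this notion for types of the same order, but the intended meaning here is the evident one). Since $\ty''$ is one of the branches produced by the Main loop applied to $\ty$, it divides $f(x)$; put $P(x):=f_{\ty''}(x)$, so $P$ has type $\ty''$, $P\mid f$, and $\deg P=m_{r+2}^{\ty''}\,\om_{r+2}^{\ty''}(f)$. The heart of the argument is to show that $P$ \emph{also has type} $\ty'$. Granting this, $\ty'$ divides $f$ (because $\om_{r+1}^{\ty'}$ is a semigroup homomorphism, so $\om_{r+1}^{\ty'}(f)\ge\om_{r+1}^{\ty'}(P)=\deg P/m_{r+1}^{\ty'}>0$), and $P\mid f_{\ty'}$ (every factor of $f$ of type $\ty'$ divides $f_{\ty'}$, as $f/f_{\ty'}$ is prime to $\ty'$); a degree count then forces $P=f_{\ty'}$, which is the assertion.

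For the degree count: $e_r=f_r=1$ gives $\deg\phi'_r=\deg\phi_{r+1}=m_{r+1}=m_r$, and since $\la-h_r$ has the same denominator $e$ as $\la$ while $\deg\psi_r^{\ty'}=\deg\psi$, we get $m_{r+1}^{\ty'}=e\,(\deg\psi)\,m_r=m_{r+2}^{\ty''}$. Proposition \ref{refinement}(c) applied to $f$ reads $R_\la(f)(y)\sim R'_{\la-h_r}(f)(cy)$ (with $c=\epsilon^e$ in the notation there), and the substitution $y\mapsto cy$ carries the $\psi$-adic order of a polynomial in $\F_r[y]$ into its $\psi_r^{\ty'}$-adic order, since $\psi_r^{\ty'}(y)=c^{\deg\psi}\psi(c^{-1}y)$ is, up to a constant, the minimal polynomial over $\F_r$ of $c\gamma$ for $\gamma$ a root of $\psi$. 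Hence $\om_{r+1}^{\ty'}(f)=\ord_{\psi_r^{\ty'}}(R'_{\la-h_r}(f))=\ord_{\psi}(R_\la(f))=\om_{r+2}^{\ty''}(f)$, so $\deg f_{\ty'}=m_{r+1}^{\ty'}\om_{r+1}^{\ty'}(f)=m_{r+2}^{\ty''}\om_{r+2}^{\ty''}(f)=\deg f_{\ty''}$; as $f_{\ty''}$ is then a monic factor of the monic polynomial $f_{\ty'}$ of equal degree, $f_{\ty''}=f_{\ty'}$.

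There remains the claim that $P$ has type $\ty'$. Being of type $\ty''$, $P$ has every truncation of $\ty''$; in particular it has types $\ty=\ty''_r$ and $\ty^0=\ty''_{r-1}$ (the identity $\ty''_r=\ty$ holds because $\phi_{r+1}$, as a representative of $\ty$, satisfies $R_r(\phi_{r+1})\sim y-\eta$, and $\ty''_{r-1}=\ty^0=\ty'_{r-1}$, since replacing $\phi_r$ by $\phi'_r=\phi_{r+1}$ does not change the residual polynomials of orders $<r$). So conditions (1) and (2) of Definition \ref{type} at levels $1,\dots,r-1$ hold for $\ty'$ because they hold for $\ty''$, and only level $r$ must be checked. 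Apply Proposition \ref{refinement} to $P$: part (a) gives $(N')_r^{h_r}(P)=\HH(N_{r+1}^-(P))$, and since $P$ has type $\ty''$ the polygon $N_{r+1}(P)=N_{r+1}^-(P)$ is a single side of slope $\la$, hence $(N')_r^{h_r}(P)$ is a single side of slope $\la-h_r$. This is in fact the whole polygon $N'_r(P)$: using that $P$ has types $\ty$ and $\ty^0$ and that $m_{r+1}^{\ty}=m_r^{\ty^0}=m_r$, the degree formula of Definition \ref{type} gives $\om_{r+1}^{\ty}(P)=\deg P/m_r=\om_r^{\ty^0}(P)$ and shows $N'_r(P)$ has horizontal length $\deg P/m_r$ as well, while $\ell((N')_r^{h_r}(P))=\ell(N_{r+1}^-(P))=\om_{r+1}^{\ty}(P)$ and $\ell((N')_r^-(P))=\om_r^{\ty^0}(P)$ by \cite[Lem.2.17]{GMN}; thus $(N')_r^{h_r}(P)$, $(N')_r^-(P)$ and $N'_r(P)$ all have the same length and coincide. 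Finally Proposition \ref{refinement}(c) applied to $P$ gives $R_\la(P)(y)\sim R'_{\la-h_r}(P)(cy)$ with $R_\la(P)\sim\psi^{a}$ ($P$ has type $\ty''$), and the change of variable turns this into $R'_{\la-h_r}(P)(y)\sim\psi_r^{\ty'}(y)^{a}$. Hence $P$ has type $\ty'$.

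The step I expect to be the main obstacle is the identification $N'_r(P)=(N')_r^{h_r}(P)$ — that passing to the improved representative $\phi'_r$ does not make the principal polygon of $P$ acquire sides of slope $\ge-h_r$. This is precisely where the hypothesis $e_r=f_r=1$ enters essentially; everything else is a routine unwinding of Definition \ref{type} and of Proposition \ref{refinement}.
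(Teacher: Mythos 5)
Your proof is correct and follows essentially the same route as the paper's: the key ingredient in both is Proposition \ref{refinement}(c) combined with a degree count to force $f_{\ty'}=f_{\ty''}$. The only (purely symmetric) difference is direction — you show $f_{\ty''}$ has type $\ty'$, while the paper shows $f_{\ty'}$ has type $\ty''$ — and you spell out in more detail the verification that the refined polygon $(N')_r^{h_r}(P)$ is actually all of $N'_r(P)$, which the paper leaves implicit.
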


\begin{proof}
If $\la=h_{r+1}/e_{r+1}$, with $h_{r+1},\,e_{r+1}$ positive coprime integers, then $\la-h_r=(h_{r+1}-e_{r+1}h_r)/e_{r+1}$, with coprime numerator and denominator; thus, $e^{\ty''}_{r+1}=e^{\ty'}_r=e_{r+1}$. Also, $f^{\ty''}_{r+1}=f^{\ty'}_r=\deg \psi$, so that  $m^{\ty''}_{r+2}=e_{r+1}\deg\psi \deg\phi_{r+1}= m^{\ty'}_{r+1}$. By c) of Proposition \ref{refinement}, applied to $P(x)=f_{\ty'}(x)$, we have $\om^{\ty''}_{r+2}(f_{\ty'})=\om^{\ty'}_{r+1}(f_{\ty'})$, and
$$
\deg f_{\ty'}=m^{\ty'}_{r+1}\om^{\ty'}_{r+1}(f_{\ty'})=m^{\ty''}_{r+2}\om^{\ty''}_{r+2}(f_{\ty'}).
$$ 
This shows that $f_{\ty'}(x)$ has type $\ty''$; since $\deg f_{\ty'}=\deg f_{\ty''}$, we have necessarily $f_{\ty'}=f_{\ty''}$.
\end{proof}

This observation leads to an important optimization of the Basic algorithm. Whenever we apply the Main loop to a type $\ty^0$ and one of the outputs is a non-complete branch $\ty=(\tilde{\ty}_0;-h_r,y-\eta)$, with $e_r=f_r=1$:

\begin{enumerate}
\item we replace $\ty$ by the type $\ty^0$ itself, but taking $\phi'_r(x)=\phi_{r+1}(x)$ as a new representative,
\item we store the cutting slope $h_r$ as one of the data of the last level of $\ty^0$, so that when the turn comes to apply the Main loop to the new $\ty^0$,  only the sides of slope less than $-h_r$
of $(N')_r^-(f)$ will be analyzed.
\end{enumerate}

We call this a \emph{refinement step}, and we use the term \emph{Montes' algorithm} to name the algorithm that is obtained from the Basic algorithm by applying a refinement step to every branch with $e_rf_r=1$.
By Corollary \ref{feq}, every future branch $\ty'$ of $\ty$ is replaced by an equiva\-lent branch $\ty''$ of the new $\ty^0$, so that the two algorithms are equivalent. By Proposition \ref{refinement}, the distance to the end of the algorithm covered by one application of the Main loop of Montes' algorithm is measured by $\ind^{h_r}_{\ty^0,\phi'_r}(f)$.

However, the refinement steps cause a strong diminution of the complexity. In fact, passing from order $r$ to order $r+1$ introduces a new level of recursivity in the basic tasks of the Main loop. Therefore, in Montes' algorithm the same information is obtained by working in a lower order.  For instance, if in the Basic algorithm we find a branch of order $r+n$, with $n$  successive levels with $e_{r+i}f_{r+i}=1$, for $0\le i<n$,
$$
\tilde{\ty}=(\phi_1(x);\la_1,\phi_2(x);\cdots;\la_{r-1},\phi_r(x);-h_r,\phi_{r+1}(x);\cdots;-h_{r+n-1},\phi_{r+n}(x)).
$$
Starting with the trunk $(\phi_1(x);\la_1,\phi_2(x);\cdots;\la_{r-1},\phi_r(x))$, we reached $\tilde{\ty}$ by applying the Main loop $n$ times in orders $r,r+1,\dots, r+n$. If we refine, this type collapses to
$$
\tilde{\ty}'=(\phi_1(x);\la_1,\phi_2(x);\cdots;\la_{r-1},\phi'_r(x)),
$$with $\phi'_r(x)=\phi_{r+n}(x)$,
and we reach $\tilde{\ty}'$ by applying the Main loop $n$ times too, but always in order $r$.

In order to homogenize the flow of the algorithm, we introduce a variable $H_r$ that stores an integer value at each $r$-th level of each type. Initially, it is given the value zero, and it is changed to $H_r=h_r$ if we fall in a refinement step. This allows us to use a general Main loop, which presents only two differences with respect to the Main loop of the Basic algorithm:

\begin{itemize}
 \item in step 1 only the sides of slope less than $-H_r$ of the Newton polygon of $f(x)$ are analyzed,
\item in step 4, after computing the representative $\phi_{r+1}$ of a non-complete new branch of order $r$, we proceed as follows: if $e_rf_r>1$, we take the new branch as one of the output branches, with the value $H_{r+1}=0$. If $e_rf_r=1$, we take the input type as one of the output branches, with $H_r=h_r$, and $\phi'_r=\phi_{r+1}$ as a new representative.
\end{itemize}

We can interpret also the refinement steps as a search for the optimal representatives. The search is performed by applying the Basic algorithm and  not enlarging the types till a branch with $e_rf_r>1$ is found.

Summing up, Montes' algorithm has the same number of iterations than the basic algorithm, but a much lower complexity. It works only with optimal representatives, and it works always at the minimum order possible till a new optimal representative forces to pass to a higher order.

In spite of this apparent strong optimality, one could speculate on an improvement based on a
more intelligent way to pass from an optimal type of order $r$ to an optimal type of order $r+1$.
The search for an optimal representative is done by blind lifting of certain polynomials over finite fields to $\Z$,
and a blind application of the Basic algorithm (without raising the order). This is extremely fast in practice, but there could exist a more direct way to obtain the next optimal representative.

\subsection{Computation of the index with Montes' algorithm}
Denote by $\ty_r^{\opt}(f)$ the set of types of order $r$ that are produced by Montes' algorithm. The sets $\ty_r^{\opt}(f)$ are quite different from the sets $\ty_r(f)$ produced by the Basic algorithm, which were crucial in the definition of $\ind_r(f)$ and the  proof of the Theorem of the index. We need to compare in some sense these two types of sets. This is provided by Proposition \ref{inverserefinement} below, which is similar to Proposition \ref{refinement}, but going in the opposite direction.

Let's go back to the situation of Corollary \ref{feq}. Suppose that the Basic algorithm is working with a type $\ty^0$ of order $r-1$, and it finds a branch of order $r+1$ (we denote it now by $\ty$ instead of $\ty''$):
$$\ty=(\tilde{\ty}^0;-h_r,\phi_{r+1};\la_{r+1},\psi_{r+1}(y))\in\ty_{r+1}(f),$$ as the result of two consecutive enlargements of $\ty_0$, and we have $e_r^{\ty}f_r^{\ty}=1$. Let $c=\epsilon^{e^\ty_{r+1}}$ be the constant of c) of Proposition \ref{refinement}, and consider the branch of order $r$ of $\ty^ 0$, $$\ty'=((\tilde{\ty}^0)';\la'_r,\psi'_r(y)),\quad \la'_r=\la_{r+1}-h_r,\ \psi'_r(y)=c^{f^\ty_{r+1}}\psi_{r+1}(c^{-1}y).$$
By Corollary \ref{feq}, $\ty'$ is $f$-equivalent to $\ty$.

\begin{rem}
Any representative $\phi'(x)$ of $\ty'$ can be taken too as a representative of $\ty$.
\end{rem}
In fact, along the proof of Corollary \ref{feq} we saw that $m^{\ty}_{r+2}=m^{\ty'}_{r+1}$, so that $\phi'(x)$ has the  right degree; thus, it is sufficient to check that $\om^{\ty}_{r+2}(\phi')=1$, and this is given by c) of Proposition \ref{refinement} applied to $P(x)=\phi'(x)$.

Let $N_{r+2}(-)$, denote the principal Newton polygon of order $r+2$, with respect to $\ty$ and $\phi(x):=\phi'(x)$. Let
$(N')_{r+1}(-)$ denote the principal Newton polygon with respect to $\ty'$ and $\phi'(x)$. Denote
$$
\F_r:=\F_r^{\ty'}=\F_r^{\ty}=\F_{r+1}^{\ty};\quad \F:=\F_{r+2}^{\ty}=\F_r[y]/\psi_{r+1}(y)=\F_r[y]/\psi_{r+1}(c^{-1}y)=\F_{r+1}^{\ty'}.
$$
For any $\la=-h/e$, with $h,e$ coprime positive integers, let $R_{\la}(-)(y)\in\F[y]$ denote the residual polynomial in order $r+2$, with respect to $\ty$, $\phi(x)$, $\la$, and let $R'_{\la}(-)(y)\in\F[y]$ denote the residual polynomial in order $r+1$, with respect to $\ty'$, $\phi'(x)$, $\la$.

Let us write $e_{r+1}=e_{r+1}^\ty=e_r^{\ty'}$, $h_{r+1}=h_{r+1}^\ty$. For the type $\ty'$ we have $\ell_r^{\ty'}h_r^{\ty'}-(\ell'_r)^{\ty'}e_r^{\ty'}=1$. Since $h_r^{\ty'}=h_{r+1}-e_{r+1}h_r$, this can be written as
$$
\ell_r^{\ty'}h_{r+1}-\left((\ell'_r)^{\ty'}_r+\ell_r^{\ty'}h_r\right)e_{r+1}=1.
$$
For the type $\ty$ we have $\ell_{r+1}^{\ty}h_{r+1}-(\ell'_{r+1})^{\ty}e_{r+1}=1$.
Therefore, we can choose $\ell_{r+1}^{\ty}=\ell_r^{\ty'}$, $(\ell'_{r+1})^{\ty}=(\ell'_{r})^{\ty'}+\ell_r^{\ty'}h_r$.

\begin{proposition}\label{inverserefinement}
Let $P(x)\in\Z[x]$ be a nonzero polynomial. Suppose we choose $\ell_{r+1}^{\ty}=\ell_r^{\ty'}$. With the previous notations,
\begin{itemize}
\item[a)]  $N_{r+2}(P)=(N')_{r+1}(P)$.
\item[b)] $\ind_{\ty}(P)=\ind_{\ty'}(P)$.
\item[c)] There exists a constant $\tau\in\F_r^*$, depending only on $\ty'$, such that for any $\la=-h/e$, with $h,e$ coprime positive integers, we have $R_{\la}(P)(y)=\tau^u R'_{\la}(P)(\tau^{-h}y)$, where $u$ is the ordinate of the initial point of the $\la$-component of $N_{r+2}(P)$.
\end{itemize}
\end{proposition}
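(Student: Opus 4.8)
The plan is to reproduce the proof of Proposition~\ref{refinement} almost line for line, the only structural change being that the comparison now runs between order $r+2$ (with respect to $\ty$ and $\phi$) and order $r+1$ (with respect to $\ty'$ and $\phi'$) rather than between orders $r+1$ and $r$. The reason no auxiliary transformation $\HH$ is needed --- in contrast with Proposition~\ref{refinement}(a) --- is that $\ty$ and $\ty'$ carry, up to reindexing by the suppression of the trivial level of $\ty$, the \emph{same} tower of discrete valuations, so that no slope shift occurs and the two Newton polygons coincide outright. Concretely, $\phi(x)=\phi'(x)=\phi_{r+2}^\ty=\phi_{r+1}^{\ty'}$ and $m:=m_{r+2}^\ty=m_{r+1}^{\ty'}$ (as observed just before the statement), so the $\phi$-adic development $P(x)=\sum_{i\ge0}a_i(x)\phi(x)^i$, with $\deg a_i<m$, is simultaneously the $\phi'$-adic development; and everything reduces to comparing, termwise, the valuations $v_{r+2}^\ty(a_i\phi^i)$ and $v_{r+1}^{\ty'}(a_i(\phi')^i)$ and (for~(c)) the corresponding residual coefficients.

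For~(a) I fix a common root $\t\in\overline{\Q}_p$ of $f_\ty(x)=f_{\ty'}(x)$ (these coincide by Corollary~\ref{feq}) and prove $v_{r+2}^\ty(a_i\phi^i)=v_{r+1}^{\ty'}(a_i(\phi')^i)$ for every $i$ with $a_i\ne0$, in two steps. First, $\deg a_i<m$ forces $\om_{r+2}^\ty(a_i)=0=\om_{r+1}^{\ty'}(a_i)$ and $f_\ty\nmid a_i$, so, as in the proof of Proposition~\ref{refinement} (valuation at a root combined with the degree bound), $v_{r+2}^\ty(a_i)=(e_1\cdots e_{r-1})\,e_{r+1}^\ty\,v(a_i(\t))=v_{r+1}^{\ty'}(a_i)$, the two ramification products being equal because $e_r^\ty=1$ and $e_r^{\ty'}=e_{r+1}^\ty$. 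Second, using \cite[Prop.2.7+Thm.2.11]{GMN} and the recursions $v_{s+1}(\phi_s)=e_sv_s(\phi_s)+h_s$, $v_{s+1}(\phi_{s+1})=e_sf_sv_{s+1}(\phi_s)$, together with $e_r^{\ty'}=e_{r+1}^\ty$, $f_r^{\ty'}=f_{r+1}^\ty$ and $\la_r^{\ty'}=\la_{r+1}-h_r$, a one-line computation gives $v_{r+2}^\ty(\phi)=v_{r+1}^{\ty'}(\phi')$, the trivial level of $\ty$ contributing precisely the factor $e_r^\ty f_r^\ty=1$. Hence $N_{r+2}(P)=(N')_{r+1}(P)$. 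Part~(b) is then immediate: the two (now equal) principal polygons have the same index, and the attached weights agree, since $f_0^\ty\cdots f_{r+1}^\ty=f_0\cdots f_{r-1}\cdot f_r^\ty\cdot f_{r+1}^\ty=f_0\cdots f_{r-1}\cdot f_r^{\ty'}=f_0^{\ty'}\cdots f_r^{\ty'}$.

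For~(c), which I expect to be the delicate part, I would unwind the definition of residual coefficients \cite[Defs.2.20-2.21]{GMN}. Writing $\la=-h/e$ and letting $s,u$ denote the initial abscissa and ordinate of the $\la$-component of $N_{r+2}(P)=(N')_{r+1}(P)$, one has $R_\la(P)(y)=\sum_j c_{s+je}\,y^j$ and $R'_\la(P)(y)=\sum_j c'_{s+je}\,y^j$, with $c_i=(z_{r+1}^\ty)^{t_{r+1}(i)}\,R_{r+1}^\ty(a_i)(z_{r+1}^\ty)$ and $c'_i=(z_r^{\ty'})^{t'_r(i)}\,R_r^{\ty'}(a_i)(z_r^{\ty'})$ for $i$ on the component. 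I then combine three facts: (i)~Proposition~\ref{refinement}(c), applied to the polynomial $a_i$ --- the trivial level $r$ of $\ty$ playing the role of the trivial branch, and $\phi_r^{\ty'}=\phi_{r+1}$ the refined representative --- which gives $R_{r+1}^\ty(a_i)(y)=\epsilon^{s_i}\,R_r^{\ty'}(a_i)(c\,y)$, where $c=\epsilon^{e_{r+1}^\ty}$, $\epsilon\in\F_r^*$ depends only on $\ty^0$, and $s_i$ is the initial abscissa of the relevant component of $a_i$; (ii)~the identity $z_r^{\ty'}=c\,z_{r+1}^\ty$, forced by $\psi'_r(y)=c^{f_{r+1}^\ty}\psi_{r+1}(c^{-1}y)$; (iii)~the equality $t_{r+1}(i)=t'_r(i)$, which follows from the choice $\ell_{r+1}^\ty=\ell_r^{\ty'}$ fixed before the statement, together with $u_i=u'_i$ from~(a) and $s_i=s'_r(a_i)$ from Proposition~\ref{refinement}(a) applied to $a_i$ (since $\HH$ preserves abscissas), in exact parallel with the use of $\ell_r=0$ in the proof of Proposition~\ref{refinement}. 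Feeding (i)--(iii) into the expressions for $c_i$ and $c'_i$, and using $e_{r+1}^\ty\,t'_r(i)=s_i-\ell_r^{\ty'}u_i$ from the definition of $t'_r(i)$, every factor involving $z_{r+1}^\ty$ cancels and one is left with $c_i=\epsilon^{\ell_r^{\ty'}u_i}c'_i=\tau^{u_i}c'_i$, where $\tau:=\epsilon^{\ell_r^{\ty'}}\in\F_r^*$ depends only on $\ty'$. Since $u_{s+je}=u-jh$, summing over $j$ gives $R_\la(P)(y)=\tau^{u}R'_\la(P)(\tau^{-h}y)$. The real work is concentrated in this last step: matching the two recursive definitions of residual coefficient across the change of order, ensuring the surviving exponent is exactly $\ell_r^{\ty'}u_i$ (so that the weight is the ordinate $u_i$, not the abscissa as in Proposition~\ref{refinement}), and verifying that $\tau$ depends on $\ty'$ alone.
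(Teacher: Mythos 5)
Your proof is correct, and for parts~(b) and~(c) it runs along exactly the same lines as the paper: pass to the common $\phi$-adic development, relate the residual coefficients $c_i$ and $c'_i$ termwise via Proposition~\ref{refinement}(c) applied to each $a_i$ and the identity $z_r^{\ty'}=c\,z_{r+1}^\ty$, use $t_{r+1}(i)=t'_r(i)$ (forced by $\ell_{r+1}^\ty=\ell_r^{\ty'}$ together with $s_{r+1}^\ty(a_i)=s_r^{\ty'}(a_i)$, which comes from Proposition~\ref{refinement}(a) applied to $a_i$), and conclude $c_i=\tau^{u_i}c'_i$ with $\tau=\epsilon^{\ell_r^{\ty'}}$. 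Where you deviate is in part~(a). The paper's argument is shorter and more structural: it observes that $v_{r+2}^\ty(a)/e_{r+1}^\ty$ and $v_{r+1}^{\ty'}(a)/e_r^{\ty'}$ are, by definition, the ordinates at the origin of the support lines $L_{\la_{r+1}}(N_{r+1}^\ty(a))$ and $L_{\la_{r+1}-h_r}((N')_r(a))$, and these coincide directly from Proposition~\ref{refinement}(a) since $\HH$ fixes the vertical axis; this gives the equality $v_{r+2}^\ty=v_{r+1}^{\ty'}$ as discrete valuations on the whole of $\Q_p(x)^*$ at once. You instead fix a root $\t$ of $f_\ty=f_{\ty'}$ and verify $v_{r+2}^\ty=v_{r+1}^{\ty'}$ term by term on the pieces $a_i\phi^i$, splitting into the coefficient part (via the degree bound and \cite[Prop.2.9]{GMN}, with the ramification products matching because $e_r^\ty=1$, $e_r^{\ty'}=e_{r+1}^\ty$) and the $\phi$-part (via the recursions of \cite[Prop.2.7,Thm.2.11]{GMN}). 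This is also correct, just more computational and slightly longer than ``one line''; the paper's route has the advantage of not needing to re-derive the equality on $\phi$ from the level recursions, since it follows automatically once the two valuations are identified.
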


\begin{proof}
For any polynomial $a(x)\in\Z[x]$, $$v_{r+2}^\ty(a)/e_{r+1}^{\ty}=v_{r+2}^\ty(a)/e_{r+1},\qquad v_{r+1}^{\ty'}(a)/e^{\ty'}_r=v_{r+1}^{\ty'}(a)/e_{r+1},$$ are the ordinates at the origin of the lines $L_{\la}(N_{r+1}(a))$,  $L_{\la-h_r}((N')_r(a))$,  res\-pectively \cite[Def.2.5]{GMN}. These two ordinates at the origin coincide by a) of Proposition \ref{refinement}, so that $v_{r+2}^\ty=v_{r+1}^{\ty'}$. This proves a), and b) is an immediate consequence.

Consider the $\phi$-adic development $P(x)=\sum_{0\le i}a_i(x)\phi(x)^i$, and denote $u_i=v_{r+1}^{\ty'}(a_i\phi^i)=v_{r+2}^{\ty}(a_i\phi^i)$, for all $i\ge 0$. Let $\{c_i\}_{i\ge0}$ be the residual coefficients of $N_{r+2}(P)$, and $\{c'_i\}_{i\ge0}$ be the residual coefficients of $(N')_{r+1}(P)$. Since the two polygons are constructed from the same set of points $(i,u_i)$ of the plane, we have $c_i=0$ if and only if $c'_i=0$.
Let $i$ be an integer abscissa such that the point $(i,u_i)$ lies on $N_{r+2}(P)=(N')_{r+1}(P))$, so that $c_ic'_i\ne0$. In this case, we have by definition,
$$
c_i=(z_{r+1})^{t_{r+1}(i)}R_\la(a_i)(z_{r+1}),\quad c'_i=(z'_r)^{t'_r(i)}R'_\la(a_i)(z_r).
$$
By definition, $z_{r+1}\equiv y\md{\psi_{r+1}(y)}$, and $z'_r\equiv y\md{\psi_{r+1}(c^{-1}y)}$ in $\F_r[y]$; thus, $cz_{r+1}=z'_r$. By a) of Proposition \ref{refinement} applied to $P(x)=a_i(x)$, we have $s_{r+1}^\ty(a_i)=s_r^{\ty'}(a_i)$; since $\ell_{r+1}^{\ty}=\ell_r^{\ty'}$ by hypothesis, and $e_{r+1}^{\ty}=e_r^{\ty'}$, we get $t_{r+1}(i)=t'_r(i)$. Therefore, by c) of Proposition \ref{refinement},
$$
c'_i/c_i=(z'_r/z_{r+1})^{t_r'(i)}\epsilon^{-s_r^{\ty'}(a_i)}=\epsilon^{-\ell_r^{\ty'}u_i}.
$$
If $(s,u)$ is the initial point of the $\la$-component of $N_{r+2}(P)=(N')_{r+1}(P)$, and $i=s+je$, we have $u_i=u-jh$, so that
$R_{\la}(P)(y)=\tau^u R'_{\la}(P)(\tau^{-h}y)$, for $\tau=\epsilon^{\ell_r^{\ty'}}$.
\end{proof}

Therefore, we are able to deduce from the optimal types constructed by Montes' algorithm, relevant information about the general types that would be constructed by the Basic algorithm. In  particular, by an alternative and iterative application of Propositions \ref{refinement} and
\ref{inverserefinement}, all values of $\ind_\ty(f)$, for all $\ty\in\ty_r(f)$, can be captured along the flow of Montes' algorithm.

\begin{rem}
If at the end of step 1 of the Main loop of Montes' algorithm, we accumulate to a global variable the value $\ind^{h_r}_{\ty^0}(f)$, the final output of this global variable is $\ind(f)$.
\end{rem}

In fact, if the input type $\ty^0$ is the result of an ordinary
enlargement, then $h_r=0$ and $\ind^{h_r}_{\ty^0}(f)=\ind_{\ty^0}(f)$; if $\ty^0$ is the result of a refinement step then, by Proposition \ref{refinement},
$\ind^{h_r}_{\ty^0}(f)$ is equal to the $f$-index of the type that the Basic algorithm would have produced if we had not refined.
Proposition \ref{inverserefinement} guarantees that the future development of Montes's algorithm after a refinement step, mirrors
the future development of the Basic algorithm.

\section{Generators of the prime ideals}\label{secgenerators}
In this section we compute generators of the prime ideals lying above $p$ in terms of the output of Montes' algorithm:
a list $\T=\{\ty_{\p_1},\dots,\ty_{\p_1}\}$, of $f$-complete types with optimal representatives, which are in $1-1$ correspondence
with the prime ideals $\p_1,\dots,\p_g$ of $K$ dividing $p\,\ZK$. We write $e_r^\p$, $\la_r^\p$, $\phi_r^\p$, etc. to indicate that a datum corresponds to the type $\ty_\p$. Recall that $e(\p/p)=e_1^\p\cdots e_r^\p$ and $f(\p/p)=f_0^\p\cdots f_r^\p$, can be read in the data of $\ty_\p$. We choose a root $\t\in\overline{\Q}$ of $f(x)$, and denote by $\t^{\p}\in\overline{\Q}_p$ the root of $f_{\ty_\p}(x)$, image of $\t$ under the topological embedding $K\hookrightarrow K_\p$.

If $\ty\in\T$ has order zero, and $\phi(x)$ is a representative of $\ty$, then the correspon\-ding prime ideal is generated by $(p,\phi(x))$ by Kummer's criterion. If $\ty\in\T$ has order one and its truncation $\ty_0$ of order zero has $\ind_{\ty_0}(f)=0$,
then the program computes generators of the corresponding prime ideal by using Dedekind's criterion. 

From now on, we fix a type $\ty=\ty_\p$, corresponding to a prime ideal $\p$ that did not fall in those special cases. We omit the superscript $(\ )^\p$ for the data of $\ty$. Let $r$ be the order of $\ty$. We want to compute an integral element $\alpha=\alpha_\p\in\Z_K$ satisfying
$$
v_\p(\alpha)=1;\quad  v_\q(\alpha)=0, \ \forall \q\mid p,\,\q\ne \p;\quad v_\l(\alpha)\ge0,\ \forall \l\nmid p,
$$
so that the ideal $\p$ is generated by $p$ and $\alpha$.

Let us first construct an element $\beta=\beta_\p\in K$ such that $v_\p(\beta)=1$. To this end we compute first a representative
$\phi_{r+1}(x)$ of $\ty$. Since $\ty$ is $f$-complete, $\om_{r+1}(f)=1$ and $\phi_r(x)\ne f(x)$. The Newton polygon $N_{r+1}(f)$, with respect to $\ty$ and $\phi_{r+1}(x)$, is one-sided of length one, and integer slope $-H$, where $H$ is the height of the side. By the theorem of the polygon,
\begin{align*}
v_\p(\phi_{r+1}(\t))=&\,e(\p/p)v(\phi_{r+1}(\t^{\p}))=v_{r+1}(\phi_{r+1})+H\\
=&\,e_rf_rv_{r+1}(\phi_r)+H=e_rf_r(e_rv_r(\phi_r)+h_r)+H,
\end{align*}
the last two equalities by \cite[Thm.2.11,Prop.2.7]{GMN}. On the other hand, $\om_{r+1}(\phi_r)=0$ \cite[Prop.2.15]{GMN},  and \cite[Prop.2.9]{GMN} shows that
$$
v_\p(\phi_r(\t))=e(\p/p)v(\phi_r(\t^{\p}))=e_r(v_r(\phi_r)+(h_r/e_r))=e_rv_r(\phi_r)+h_r.
$$
Therefore, if we consider the element $\beta\in K$, defined as
$$
\beta:=\dfrac{\phi_{r+1}(\t)}{\phi_r(\t)^{e_rf_r}},
$$
we have $v_\p(\beta)=H$. Thus, our aim is to find a kind of ``worse possible"\ representative $\phi_{r+1}(x)$ of $\ty$; that is,  one satisfying $H=1$. To this end, we compute a blind $\phi_{r+1}(x)$. If $H=1$ we are done; if $H>1$ we use a subroutine based on \cite[Prop.2.10]{GMN}, to construct a polynomial $P(x)\in\Z[x]$ with the following properties:
$$
\deg P<m_{r+1},\quad v_{r+1}(P)=v_{r+1}(\phi_{r+1})+1,\quad R_r(P)(y)=1.
$$
The point is that $\tilde{\phi}_{r+1}:=\phi_{r+1}(x)+P(x)$ is another representative of $\ty$, and it has $H=1$. In fact,
$\deg \tilde{\phi}_{r+1}=\deg \phi_{r+1}$ and $\om_{r+1}(\tilde{\phi}_{r+1})=\om_{r+1}(\phi_{r+1})=1$, by \cite[Prop.2.8]{GMN}, so that $\tilde{\phi}_{r+1}$ is of type $\ty$ and it
$\tilde{\phi}_{r+1}$ is a representative of $\ty$. Now, since $\deg P<m_{r+1}$, we have
$$
v(P(\t^{\p}))=\dfrac{v_{r+1}(P)}{e(\p/p)}=\dfrac{v_{r+1}(\phi_{r+1})+1}{e(\p/p)}<\dfrac{v_{r+1}(\phi_{r+1})+H}{e(\p/p)}=v(\phi_{r+1}(\t^{\p})).
$$Thus, $v(\tilde{\phi}_{r+1}(\t^{\p}))=v(P(\t^{\p}))=(v_{r+1}(\phi_{r+1})+1)/e(\p/p)$, and
$$
\beta:=\dfrac{\tilde{\phi}_{r+1}(\t)}{\phi_r(\t)^{e_rf_r}}\Longrightarrow v_\p(\beta)=1.
$$

Our next step is to compute the values $v_\q(\beta)$, for all other primes $\q$ lying above $p$, $\q\ne \p$.
\begin{definition}\label{ordre-extes}
We say that $\ty_\q$ dominates $\ty$, and we write $\ty_\q>\ty$, if $\ty_\q$ is a branch of $(\ty)_{r-1}$ originated from a side of $N_{\ty,\phi_r}(f)$ of slope $\la<\la_r$. In this case we denote $\la_{\p}^{\q}=\la$ and we call it the dominating slope of $\ty_\q$ over $\ty=\ty_\p$.
\end{definition}

\begin{proposition}\label{vqbeta}
 Let $\q$ be a prime ideal of $K$ lying above $p$, $\q\ne\p$. Let $s$ be the order of $\ty_\q$. Then,
$$
v_\q(\beta)=\left\{\begin{array}{ll}
 e_rf_r(e_r^\q\cdots e_s^\q)(\la_{\p}^{\q}-\la_r),&\quad \mbox{ if }\ty_\q>\ty,\\
0 ,&\quad \mbox{ otherwise }.
\end{array}
\right.
$$
\end{proposition}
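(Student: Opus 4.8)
\textbf{Proof plan for Proposition \ref{vqbeta}.}

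The plan is to compute $v_\q(\beta)$ by reducing it to a valuation of the form $v(\phi_{r+1}(\t^{\q}))$ and $v(\phi_r(\t^{\q}))$, where $\t^{\q}$ is a root of $f_{\ty_\q}(x)$, and then use the Theorem of the polygon \cite[Thm.3.1]{GMN} applied to the Newton polygons associated with the type $\ty_\q$ (or more precisely its truncation $(\ty_\q)_{r-1}$, which shares its first $r-1$ levels with $\ty$). First I would fix an embedding $K\hookrightarrow K_\q$ sending $\t$ to $\t^{\q}$, so that $v_\q(\beta)=e(\q/p)\,v(\beta(\t^{\q}))$, where now $v$ is the normalized $p$-adic valuation on $\overline{\Q}_p$. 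Since $\beta=\tilde\phi_{r+1}(\t)/\phi_r(\t)^{e_rf_r}$, this amounts to computing $v(\tilde\phi_{r+1}(\t^{\q}))$ and $v(\phi_r(\t^{\q}))$.

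The key observation is that, because $\phi_r(x)$ is a representative of the common truncation $\ty^0=(\ty)_{r-1}=(\ty_\q)_{r-1}$ (when $\ty_\q>\ty$; the superscript/level data of $\ty_\q$ agree with those of $\ty$ up to order $r-1$), the valuation $v(\phi_r(\t^{\q}))$ is controlled by the Newton polygon $N_{r,\ty^0,\phi_r}(f)$: the root $\t^{\q}$ contributes to a side of slope $\la=\la_\p^{\q}$, and the Theorem of the polygon gives $v(\phi_r(\t^{\q}))=(v_r(\phi_r)+|\la|)/(e_1\cdots e_{r-1})=(e_r v_r(\phi_r)+h_r(\la))/(e_1\cdots e_r)$ in the appropriate sense, exactly as in the computation of $v_\p(\phi_r(\t^{\p}))$ in the main text but with $\la$ in place of $\la_r$. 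The degree $\deg\tilde\phi_{r+1}=m_{r+1}=e_rf_rm_r$ forces, via the theory of $\phi_r$-adic developments, the leading behaviour $v(\tilde\phi_{r+1}(\t^{\q}))=e_rf_r\,v(\phi_r(\t^{\q}))$ whenever $\la<\la_r$, because then the ``dominant'' term in the $\phi_r$-development of $\tilde\phi_{r+1}$ at the place $\q$ is $\phi_r(x)^{e_rf_r}$ (the other terms, controlled by $v_{r}$ and $v_{r+1}$ of lower $\phi_r$-powers, have strictly larger valuation at $\t^{\q}$ precisely because the slope $\la$ at $\q$ is strictly smaller than $\la_r$). Subtracting, the contribution of $\phi_r(\t^{\q})^{e_rf_r}$ cancels and one is left with the "correction" coming from the difference between the $\q$-slope $\la$ and the $\p$-slope $\la_r$, which after multiplying by $e(\q/p)=e_1^\q\cdots e_s^\q=(e_1\cdots e_{r-1})(e_r^\q\cdots e_s^\q)$ yields exactly $e_rf_r(e_r^\q\cdots e_s^\q)(\la_\p^{\q}-\la_r)$. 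For the case $\ty_\q\not>\ty$, either $\ty_\q$ and $\ty$ diverge at some earlier level $i<r-1$ (different $\phi$, different $\la$, or different residual polynomial), or they share the $r-1$ truncation but $\ty_\q$ comes from a side of slope $\ge\la_r$; in all these cases $v(\tilde\phi_{r+1}(\t^{\q}))=e_rf_r v(\phi_r(\t^{\q}))$ holds with \emph{equality of the dominant terms but no surplus}, so that $v_\q(\beta)=0$; a careful bookkeeping with the invariants $\om_i$ and the structure of $N_i(f)$ at the place $\q$ shows the cancellation is exact.

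I expect the main obstacle to be the careful case analysis of exactly when $v(\tilde\phi_{r+1}(\t^{\q}))=e_rf_rv(\phi_r(\t^{\q}))$ and with what (possibly zero) surplus, i.e. disentangling the $\phi_r$-adic development of $\tilde\phi_{r+1}$ at the non-optimal place $\q$. One has to be sure that no ``hidden'' lower-order term in this development acquires an unexpectedly small valuation at $\t^{\q}$; this is where the hypotheses that $\deg\tilde\phi_{r+1}=m_{r+1}$ and $\om_{r+1}(\tilde\phi_{r+1})=1$, together with the description of $v_r$, $v_{r+1}$ and the principal polygon, must be used in a coordinated way. A clean way to organize this is: (i) handle the generic case $\ty_\q>\ty$ via the Theorem of the polygon applied at slope $\la_\p^\q$ on $N_{r,\ty^0,\phi_r}(f)$; (ii) for $\ty_\q\not>\ty$ subdivide into ``branches off an earlier side/residual factor'' (handled by the fact that then $\phi_r(\t^\q)$ and even $\phi_1(\t^\q)$ or the relevant intermediate $\phi_i(\t^\q)$ is a unit-adjusted power whose valuation still matches $v(\tilde\phi_{r+1}(\t^\q))/(e_rf_r)$) and ``same truncation but non-smaller slope'' (handled exactly as the $\la'_r\ge\la_r$ discussion in the proof of Theorem \ref{optimal}). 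In each subcase the $\phi_r^{e_rf_r}$ term dominates with no surplus, giving $v_\q(\beta)=0$.
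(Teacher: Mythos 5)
Your overall strategy is the paper's: reduce $v_\q(\beta)$ to $v(\tilde\phi_{r+1}(\t^\q))$ and $v(\phi_r(\t^\q))$, handle these via the Theorem of the polygon and the Newton polygon $N_r(\phi_{r+1})$ taken with respect to the common truncation $\ty^0$, and split according to where $\ty$ and $\ty_\q$ first diverge. However, there is a concrete error at the heart of the dominating case. You assert that when $\la=\la_\p^\q<\la_r$ the dominant term in the $\phi_r$-adic development of $\tilde\phi_{r+1}$ at $\t^\q$ is $\phi_r^{e_rf_r}$, so that $v(\tilde\phi_{r+1}(\t^\q))=e_rf_r\,v(\phi_r(\t^\q))$. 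If that were true, subtracting would give $v_\q(\beta)=0$, contradicting the statement you are proving; indeed your own next sentence, invoking a nonzero "correction," is inconsistent with the equality you just wrote. The correct picture is the opposite. The polygon $N_r^{\ty^0}(\phi_{r+1})$ is one-sided of slope $\la_r$ with endpoints $(0,\,v_r(\phi_r^{e_rf_r})+e_rf_r|\la_r|)$ and $(e_rf_r,\,v_r(\phi_r^{e_rf_r}))$, and $v(\tilde\phi_{r+1}(\t^\q))$ is governed by the line of slope $\la_\p^\q$ that first touches this polygon. When $|\la_\p^\q|>|\la_r|$ (dominating), that line touches the \emph{left} endpoint, i.e.\ the constant term $a_0$ of the development dominates, giving $v(\tilde\phi_{r+1}(\t^\q))=(v_r(\phi_r^{e_rf_r})+e_rf_r|\la_r|)/(e_1\cdots e_{r-1})$, which is \emph{strictly smaller} than $e_rf_r\,v(\phi_r(\t^\q))=(v_r(\phi_r^{e_rf_r})+e_rf_r|\la_\p^\q|)/(e_1\cdots e_{r-1})$; the deficit is exactly what yields the negative value $e_rf_r(e_r^\q\cdots e_s^\q)(\la_\p^\q-\la_r)$. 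The term $\phi_r^{e_rf_r}$ dominates precisely in the \emph{other} case $\la_r^\q\ge\la_r$, and that is what makes $v_\q(\beta)=0$ there.

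Beyond this, the case where $\ty$ and $\ty_\q$ diverge at some level $r_0<r$ is dispatched too loosely. The paper proves by induction on the level that $v_i^\q(\phi_{r+1})=v_i^\q(\phi_r^{e_rf_r})$ for $1\le i\le r_0+1$, using the coincidence of the polygons $N_i^\q=N_i^\p$ up to level $r_0$ together with the equality of degrees and endpoints; then it shows $\om_{r_0+1}^\q(\phi_{r+1})=\om_{r_0+1}^\q(\phi_r)=0$ to conclude via \cite[Prop.2.9]{GMN}. Your phrase "unit-adjusted power whose valuation still matches" gestures at the right conclusion but replaces the inductive content with an assertion. Similarly, invoking "exactly as the $\la'_r\ge\la_r$ discussion in Theorem~\ref{optimal}" is only an analogy: that argument compares two representatives at the \emph{same} root $\t$, whereas here the representative is fixed and the root changes to $\t^\q$; the Newton-polygon picture to be invoked is the one for $N_r^{\ty^0}(\phi_{r+1})$ above, not the one in Theorem~\ref{optimal}.
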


\begin{proof}
Let $r_0$ be minimal with the property $(\ty_\q)_{r_0}\ne\ty_{r_0}$. Necessarily $r_0\le\min\{r,s\}$, because the types $\ty$, $\ty_\q$ are complete. Let us deal first with the case $r_0<r$. Since $(\ty_\q)_{r_0-1}=\ty_{r_0-1}$, for some primitive choice $\phi_{r_0}(x)$ of a representative of this type, the Main loop produced (at least) two different branches, that later developed to produce the types $\ty$, $\ty_\q$. Some of these branches might have been refined, causing a change of this representative at level $r_0-1$; however, arguing with these primitive branches if it were necessary, we can assume that $\phi_{r_0}=\phi_{r_0}^\p=\phi_{r_0}^\q$. This might change the values of the data of the $r$-th level, but this is not relevant in our arguments. After our assumption, $v_i^\p=v_i^\q$, $N_i^\p(-)=N_i^\q(-)$, for $1\le i\le r_0$. We claim that
\begin{equation}\label{viq}
v_i^\q(\phi_{r+1})=v_i^\q(\phi_r^{e_rf_r}),\quad\mbox{ for all }1\le i\le r_0+1.
\end{equation}
Let us show this by induction; clearly, $v_1^\q(\phi_{r+1})=0=v_1^\q(\phi_r^{e_rf_r})$, because both polynomials are monic. Suppose that (\ref{viq}) holds for some $1\le i\le r_0$, and let us show that it holds for $i+1$. By the definition of $v_{i+1}^\q$, we need only to show that
$$N_i^\q(\phi_{r+1})=N_i^\q(\phi_r^{e_rf_r}).
$$Since $i\le r_0$, it is sufficient to check that $N_i^\p(\phi_{r+1})=N_i^\p(\phi_r^{e_rf_r})$. Now, these polygons are both one-sided of slope $\la_i$, and have the same length because the two polynomials, $\phi_{r+1}$, $\phi_r^{e_rf_r}$, have the same degree. Finally, the two polygons have the same end point by the equality of (\ref{viq}) for our $i$, and by \cite[Lem.2.17]{GMN}. This ends the proof of (\ref{viq}).

We claim now that
\begin{equation}\label{omq}
\om^\q_{r_0+1}(\phi_{r+1})=\om^\q_{r_0+1}(\phi_r)=0.
\end{equation}
In fact, the polygon $N_{r_0}^\q(\phi_r)=N_{r_0}^\p(\phi_r)$ is one-sided of slope $\la_{r_0}$. If $\la_{r_0}^\q\ne\la_{r_0}$, then $R_{r_0}^\q(\phi_r)$
is a constant and $\om^\q_{r_0+1}(\phi_r)=0$. If $\la_{r_0}^\q=\la_{r_0}$, but $\psi_{r_0}^\q\ne\psi_{r_0}$, then
$R_{r_0}^\q(\phi_r)=R_{r_0}^\p(\phi_r)$ is a power of $\psi_{r_0}$ up to a multiplicative constant, and
$\psi_{r_0}^\q\nmid R_{r_0}^\q(\phi_r)$, so that $\om^\q_{r_0+1}(\phi_r)=0$ too. The same argument works for $\phi_{r+1}$.

Finally, (\ref{viq}) and (\ref{omq}) show that $v(\phi_{r+1}(\t^\q))=v(\phi_r(\t^\q)^{e_rf_r})$. Therefore, $v_\q(\phi_{r+1}(\t))=v_\q(\phi_r(\t)^{e_rf_r})$, and $v_\q(\beta)=0$.

Assume now $r_0=r$. The Main loop applied to $(\ty_\q)_{r-1}=\ty_{r-1}$ and the representative $\phi_r=\phi_r^\p$ produced a complete branch $\ty$, and (at least) another branch of order $r$
$$
\ty_\q^0=(\phi_1(x);\cdots;\la_{r-1},\phi_r(x);\la_r^0,\psi_r^0(y)),
$$whose further development produced the type $\ty_\q$. Let us denote by $v_{r+1}^0$, $\om_{r+1}^0$, $e_r^0$ the data attached to this type. Since $\t^\q$ is the root of a $p$-adic polynomial of type $\ty_\q^0$, the Theorem of the polygon shows that,
\begin{equation}\label{viq2}
v(\phi_r(\t^\q))=(v_r(\phi_r)+|\la_r^0|)/e_1\cdots e_{r-1}.
\end{equation}
Let us compute now $v(\phi_{r+1}(\t^\q))$. The Newton polygon $N_r^0(\phi_{r+1})=N_r^\p(\phi_{r+1})$ has length $e_rf_r$, slope $\la_r$, and the ordinate of the last point is $v_r(\phi_{r+1})=v_r(\phi_r^{e_rf_r})$. Consider a line of slope $\la_r^0$ far beyond the polygon, and move it upwards till it touches it; let $L$ be this line of slope $\la_r^0$ that first touches the polygon, and let $H$ be the ordinate of $L$ at the origin.
We distinguish two cases, according to
$\la_r^0<\la_r$ or $\la_r^0\ge\la_r$. In the first case, $\la_r^0=\la_\p^\q$ is the dominating slope of $\q$ over $\p$.

\begin{center}
\setlength{\unitlength}{5.mm}
\begin{picture}(15,5.5)
\put(-.2,3.8){$\bullet$}\put(2.8,.8){$\bullet$}
\put(-1,0){\line(1,0){7}}\put(0,-1){\line(0,1){6}}
\put(3,1){\line(-1,1){3}}\put(3.02,1){\line(-1,1){3}}
\multiput(-.5,4.9)(.1,-.2){20}{\mbox{\begin{scriptsize}.\end{scriptsize}}}
\multiput(3,-.1)(0,.25){5}{\vrule height1pt}
\put(-.45,-.6){\begin{footnotesize}$0$\end{footnotesize}}
\put(1.6,2.6){\begin{footnotesize}$\la_r$\end{footnotesize}}
\put(.8,1){\begin{footnotesize}$L$\end{footnotesize}}
\put(-.9,3.8){\begin{footnotesize}$H$\end{footnotesize}}
\put(.2,-1.6){$\la_r^0<\la_r$}
\put(14.8,.8){$\bullet$}\put(11.85,3.8){$\bullet$}
\put(11,0){\line(1,0){7}}\put(12,-1){\line(0,1){6}}
\put(15,1){\line(-1,1){3}}\put(15.02,1){\line(-1,1){3}}
\multiput(11,2.1)(.2,-.06){25}{\mbox{\begin{scriptsize}.\end{scriptsize}}}
\multiput(15,-.1)(0,.25){5}{\vrule height1pt}
\put(11.55,-.6){\begin{footnotesize}$0$\end{footnotesize}}
\put(13.6,2.6){\begin{footnotesize}$\la_r$\end{footnotesize}}
\put(15.6,.9){\begin{footnotesize}$L$\end{footnotesize}}
\put(11.1,1.4){\begin{footnotesize}$H$\end{footnotesize}}
\put(11.9,1.8){\line(1,0){.2}}
\put(12.2,-1.6){$\la_r^0\ge\la_r$}
\end{picture}
\end{center}\bigskip\bigskip

Arguing as in the case $r_0<r$, we get $\om_{r+1}^0(\phi_{r+1})=0$ in both cases, and
\begin{equation}\label{viq3}
v(\phi_{r+1}(\t^\q))=v_{r+1}^0(\phi_{r+1})/e_1\cdots e_{r-1}e_r^0=H/e_1\cdots e_{r-1},
\end{equation}
the last equality by the definition of $v_{r+1}^0$. The figures show that $H=v_r(\phi_r^{e_rf_r})+H'$, with
$$
H'=\left\{\begin{array}{ll}
e_rf_r|\la_r|,&\quad\mbox{ if }\la_r^0<\la_r,\\
e_rf_r|\la_r^0|,&\quad\mbox{ if }\la_r^0\ge\la_r.
\end{array}
\right.
$$
Now, (\ref{viq2}) and (\ref{viq3}), show that
$$
\dfrac{v_\q(\beta)}{e_1^\q\cdots e_s^\q}=\dfrac{v_r(\phi_r^{e_rf_r})+H'}{e_1\cdots e_{r-1}}-\dfrac{v_r(\phi_r^{e_rf_r})+e_rf_r|\la_r^0|}{e_1\cdots e_{r-1}}.
$$
Therefore, $v_\q(\beta)=0$ if $\la_r^0\ge\la_r$, and otherwise,
$$
v_\q(\beta)=\dfrac{e_1^\q\cdots e_s^\q}{e_1\cdots e_{r-1}}e_rf_r(|\la_r|-|\la_\p^\q|).
$$
\end{proof}

For the maximal types with respect to the ordering ``$>$", we take $\tilde{\alpha}_\p:=\beta_\p$. For the rest of the types
we compute recurrently:
$$
\tilde{\alpha}_\p:=\beta_\p \prod_{\ty_\q> \ty_\p } \tilde{\alpha}_\q^{-v_\q(\beta_\p)}.
$$
These elements are not far from generating the $\p_i$, since:
 $$
 v_{\q}(\tilde{\alpha}_\p)=\left\{
 \begin{array}{ll}
1  &\mbox{ if } \q=\p,\\
 0 & \mbox{ otherwise}.
 \end{array}
 \right.
 $$
Unfortunately, they could be non-integral at primes of $\ZK$ not dividing $p\,\ZK$. This can be easily arranged; we write each $\tilde{\alpha}_\p$ in the form $\tilde{\alpha}_\p=G(\theta)/b$,
with $G(x)\in \Z[x]$ and $b\in \Z$ coprime with the content of $G(x)$; and we conveniently modify $\tilde{\alpha}_\p$ into:
$$
\alpha_\p:=G(\theta)/p^{v(b)}.
$$

\section{Computational issues}
Recall that  Montes' algorithm is the optimization of the Basic algorithm of section \ref{ideals} that results from the application of the refinement process of section \ref{secrefinement}. In this section we give a more detailed description of this algorithm and we discuss some  computational aspects. We also include, as an extension of the algorithm, the computation of generators for the prime ideals as indicated in the last section.

\subsection{Outline of the algorithm}
The primary goal of Montes' algorithm is the computation of $\ind(f)$ and the construction of a set $\T$ of $f$-complete types, that faithfully represents $f(x)$.

By the recursive nature of its construction, many of the types generated by the algorithm will share many of its levels, so that  most of the computations necessary to enlarge them will be the same. Hence it is very  convenient to organize their computation in such a way that we can take profit of as much previous computations as possible. The simplest way to organize the computation of types is to store all the types being built by the algorithm in a list, which we  call {\tt STACK}. Once a type is completed, it is moved from the list {\tt STACK} to a second list called {\tt COMPLETETYPES}, which when the algorithm ends contains the final list of complete types.

The variable {\tt STACK}, as its name suggests, is a LIFO stack, which in practice determines the flow of the algorithm: the main loop of the algorithm extracts the last type from the {\tt STACK} and works it out to decide whether it is complete (in which case it  is moved to {\tt COMPLETETYPES}) or it originates a number of enlarged types, that will be added to the top of the {\tt STACK}. The program finishes when the {\tt STACK} is empty.

We could think of
the types being built along the algorithm as the branches of a tree.  The root of the tree is a node corresponding to the input polynomial $f(x)$. Every division of the branch into new subbranches is generated in every order by multiple sides of a Newton polygon of $f(x)$ and by multiple irreducible factors of the residual polynomial of each side.
The algorithm builds this tree of types from the topmost branch to the lowest one in every order. This strategy confers a certain ordering to the list {\tt COMPLETETYPES}, which is useful later on for the computation of generators of the ideals.

The computation of the $p$-index is performed along the construction of types: every time we analyze a
Newton polygon with respect to a type $\ty$, we add the number $\ind_\ty^{h_r}(f)$, given in (\ref{cuttingind}), to the variable {\tt TOTALINDEX}, whose final output is the value of $\ind(f)$.

Once the algorithm has emptied the \texttt{STACK}, the algorithm is almost finished: it remains only to
gather the information of every complete type to list the ramification indices and residual degrees of the prime ideals dividing $p\,\ZK$.

We now give a detailed outline of Montes' algorithm, using standard pseudo-code.

\vskip 4mm
\noindent{\bf MONTES' ALGORITHM}

\noindent INPUT:
\begin{itemize}
\item[-] A monic irreducible polynomial $f(x)\in\Z[x]$.
\item[-] A prime number $p\in\Z$.
\end{itemize}

\noindent OUTPUT:
\begin{itemize}
\item[-] The $p$-valuation of the index of $f(x)$ in $\ZK$.

\item[-] A list $\{(e_1,f_1),\dots, (e_g,f_g)\}$ of pairs of integers describing the factorization of $p\,\ZK$:
$$
p\,\ZK=\p_1^{e_1}\cdots\p_g^{e_g},  \qquad f(\p_i/p)=f_i.
$$
\item[-] A  list of integral elements $\alpha_1,\dots,\alpha_g\in \ZK$ such that $\p_i=p\,\ZK+\alpha_i\ZK$
\end{itemize}

\vskip 2mm

\noindent
{\bf INITIALIZATION STEPS}

\st1  Factor $f(y)=\psi_1(y)^{a_1}\cdots\psi_s(y)^{a_s}$ modulo $p$, with $\psi_i(y)\in\Fp[y]$ pairwise different monic irreducible polynomials.

\st2 Take monic polynomials $\phi_1(x),\dots, \phi_g(x)\in\Z[x]$ such that $\phi_i(y)\md{p}=\psi_i(y)$. Compute the polynomial $M(x)=(f(x)-\phi_1(x)^{a_1}\cdots\phi_s(x)^{a_s})/p$.

\st3 Initialize empty lists {\tt STACK} and {\tt COMPLETETYPES}, and set  {\tt TOTALINDEX}$\leftarrow0$.

\st4
FOR every polynomial $\phi_i(x)$  do

(Dedekind's criterion) If $a_i=1$ or $\psi_i(y)\nmid M(y)\md{p}$, output the ideal
$\p=(p,\,\phi_i(\theta))$,

with ramification index $e(\p/p)=a_i$ and residual degree $f(\p/p)=\deg \phi_i.$
Otherwise, add

to {\tt STACK} the extension $\tilde{\ty}=(\phi_i(x))$ of the type of order zero determined by $\psi_i(y)$, and set

$H_1=0$, as data of level one.

\vskip 3mm
\noindent{\bf MAIN LOOP}

\vskip 2mm

\noindent
WHILE the  \texttt{STACK } is non-empty do:

\st5 Extract the last  type $\ty^0$ from \texttt{STACK}. Let $r-1$ be its order.

\st6 Compute the Newton polygon $N_r^{H_r}(f)$, formed by the sides of slope smaller than $-H_r$.

\st7 Compute $\ind_{\ty^0}^{H_r}(f)$ using the formula (\ref{cuttingind}) and add this number to \texttt{TOTALINDEX}.

\vskip 2mm
\noindent
\hskip 2mm
FOR every side $S$ of $N_r^{H_r}(f)$ do

\stst8  Set $\lambda_r^{\ty^0}\leftarrow$ slope of $S$.

\stst9 Compute the $r$-th order residual polynomial $R_r(f)(y)\in \F_{r}[y]$.

\vskip 2mm

\stst{10}
FOR every irreducible factor $\psi(y)$ of $R_r(f)(y)$ do

\ststst{11} Make a copy $\ty$ of the type $\ty^0$, and set $\psi_r^{\ty}(y)\leftarrow \psi(y)$.

\ststst{12} Compute a representative  $\phi_{r+1}(x)\in\Z[x]$ of  $\ty$.

\ststst{13}  If $\ord_{\psi}(R_r(f))=1$ (the type is complete),
add $(\tilde{\ty};\lambda_r^{\ty},\psi^{\ty}(y))$ to {\tt COMPLETETYPES}, and continue to the next factor of $R_r(f)(y)$.

\ststst{14} If  $\deg \psi=1$ and $\lambda_r^{\ty}\in\Z$   (the type must be refined),
set $\phi_r^{\ty}(x)\leftarrow \phi_{r+1}(x)$, $H_r^{\ty}\leftarrow\lambda_r^{\ty}$, add $\ty$ to  the top of the {\tt STACK} and continue to the next factor of $R_r(f)(y)$.

\ststst{15} (Build a higher order type)
 Add $(\tilde{\ty};\lambda_r^{\ty},\psi^{\ty}(y))$ to the top of the {\tt STACK}.

\vskip 3mm
\noindent
{\bf END OF MAIN LOOP}

\vskip 3mm
\noindent
{\bf OUTPUT}

\st{16} Print the $p$-valuation of the index of $f$ in $\ZK$, given by the value of \texttt{TOTALINDEX}.

\st{17} For every type $\ty $ in the list {\tt COMPLETETYPES}, output the ramification index and residual degree of the corresponding ideal $\p$, given by:
$$
e(\p/p)=e_1^{\ty}\cdots e_{r}^{\ty},\quad
f(\p/p)=m_1^{\ty} f_1^{\ty}\cdots f_{r}^{\ty},
$$
where $m_1^\ty=\deg \phi_1^\ty$, and $r$ is the order of $\ty$

\vskip 5mm
\noindent
{\bf EXTENSION: COMPUTATION OF GENERATORS}\\
\noindent
All notations are taken from section \ref{secgenerators}

\st{18} FOR every type $\ty_\p$  in {\tt COMPLETETYPES} compute the element $\beta_{\p}=\tilde\phi_{r+1}(\theta)/\phi_r(\theta)^{e_r f_r}$.

\st{19} FOR every type $\ty_\p$  in {\tt COMPLETETYPES} compute the element
$\tilde{\alpha}_{\p}:=\beta_{\p}\prod_{\ty_\q>\ty} \tilde{\alpha}_{\q}^{-v_{\q}(\beta_\p)}$, where $v_{\q}(\beta_\p)$
is given in Proposition \ref{vqbeta}.

\st{20} FOR every type $\ty_\p$  in {\tt COMPLETETYPES} compute the element $\alpha_{\p}=G(x)/p^{v(b)}$.

To compute $\tilde{\alpha}_\p$ in step $\mathbf{19}$, it is necessary to slightly modify the algorithm in order to store all dominating slopes
$\la_\p^\q$.

\subsection{Some examples} {\hfill{\quad}}\linebreak
\noindent{\bf Example 1.}
Let us consider the irreducible polynomial
\begin{align*}
f(x):=& x^{12}-588x^{10}+476x^9+130095x^8-172872x^7-12522636x^6+ 24745392x^5\\
  &+486721116x^4-1583408736x^3-641009376x^2 +  10978063488x+59914669248,
\end{align*}
whose discriminant is
$$
\disc(F)=2^{84}\cdot 3^{64}\cdot 7^{52}
  \cdot 79^4\cdot 14159^2 \cdot 644173^2\cdot 3352073^2\,.
$$
We apply the algorithm to find the decomposition of the prime   $p=2$ in the ring of integers $\Z_K$ of the number field $K=\Q(\theta)$ generated by any root of the polynomial $f(x)$.
Since
$$
f(y)\equiv (y+1)^4\,y^8\,\md2,
$$
we find two types $\ty_1,\ty_2$ of order zero, extended respectively to $\phi_1^1(x)=x+1$, $\phi_1^2(x)=x$. The Newton polygon $N_1^{\ty_1}(f)$ has two  sides, with slopes $-3/2$ and $-1/2$ respectively, which single out two prime ideals $\p_1, \p_2$, with $e(\p_1/2)=e(\p_2/2)=2$ and $f(\p_1/2)=f(\p_2/2)=1$. The type $\ty_{\p_1}$ dominates $\ty_{\p_2}$ with dominating slope $\la_{\p_2}^{\p_1}=-3/2$.

The Newton polygon $N_1^{\ty_2}(f)$ has again two sides, with slopes $-1$ and $-1/2$, and residual polynomials
$R_{1,1}(f)=(y+1)^4$, $R_{1,2}(f)=(y+1)^2$,  respectively. Hence, the type $\ty_2$ yields two types $\ty_{2,1}, \ty_{2,2}$ of order one. The first one must be refined and the second one must be enlarged to an order $2$ type. To refine $\ty_{2,1}$, we take the new polynomial $\phi_1^{\ty_{2,1}}(x)=x+2$. The corresponding Newton polygon has only one side with slope smaller than $-1$; the slope is $-3/2$ and the residual polynomial $(y+1)^2$, so that this type must be enlarged too, to an order $2$ type. After computing their respective representatives, we have now two extended types of order one, ready to be enlarged to order $2$:
$$
\begin{array}{ll}
\ty_{2,1}=(x+2;-3/2,(x+2)^2+8), &\quad H_1=-1,    \\
\ty_{2,2}=(x;-1/2,x^2+2),&\quad H_1=0.
\end{array}
$$
The Newton polygon $N_{2}^{\ty_{2,1}}(f)$ has a unique side  with slope $-4$ and residual polynomial $(y+1)^2$, so that this type can be refined. We take, for instance, $\phi_2^{\ty_{2,1}}(x)=(x+2)^2+40$:
$$
\ty_{2,1}=(x+2;-3/2,(x+2)^2+40),\quad H_1=-1,\quad H_2=-4.
$$
The new polygon $N_2^{\ty_{2,1}}(f)$ has two
sides with slopes $-9$ and $-5$, originating two new prime ideals $\p_3, \p_4$ dividing $2\Z_K$, with
$e(\p_3/2)=e(\p_4/2)=2$, $f(\p_3/2)=e(\p_4/2)=1$. The type $\ty_{\p_3}$ dominates $\ty_{\p_4}$ with dominating slope $\la_{\p_4}^{\p_3}=-9$.

The Newton polygon  $N_{2}^{\ty_{2,2}}(f)$ has a unique side  with slope $-4$ and residual polynomial $(y+1)^2$; thus, we must refine. Take $\phi_2^{\ty_{2,2}}(x)=x^2+10$, $H_2=-4$. The
next Newton polygon has again a unique side with slope $-5$ and residual polynomial $(y+1)^2$. We refine again, taking
$\phi_2^{\ty_{2,2}}(x)=x^2+10+8x$. We get:
$$
\ty_{2,2}=(x;-1/2,x^2+8x+10),\quad H_1=0, \ H_2=-5.
$$
Now, $N_2^{\ty_{2,2}}(f)$ has two sides with slopes $-8$ and $-7$, both with residual polynomial $y+1$, thus giving two prime ideals $\p_5, \p_6$, with $e(\p_5/2)=e(\p_6/2)=2$, $f(\p_5/2)=e(\p_6/2)=1$. The type $\ty_{\p_5}$ dominates $\ty_{\p_6}$ with dominating slope $\la_{\p_6}^{\p_5}=-8$.

Summing up, $2\Z_K=(\p_1\cdots\p_6)^2$. The 2-index of $f(x)$ is $\ind_2(f)=33$, and $v(\disc(K))=18$.

Generators for the ideals $\p_i$ can be determined by using the formulas of section \ref{secgenerators}.
We find:
$$
\begin{array}{ll}
\tilde{\alpha}_1=\dfrac{(\theta+1)^2+4(\theta+1)+8}{(\theta+1)^2} \,, &
\tilde{\alpha}_2=\dfrac{(\theta+1)^2+2(\theta+1)+2}{(\theta+1)^2}\,
  \tilde{\alpha}_1^4\,, \\
\tilde{\alpha}_3=\dfrac{(\theta+2)^2+64(\theta+2)+40}{(\theta+2)^2+40} \,, &
\tilde{\alpha}_4=\dfrac{(\theta+2)^2+16(\theta+2)+40}{(\theta+2)^2+40}\,
  \tilde{\alpha}_3^4\,, \\
\tilde{\alpha}_5=\dfrac{\theta^2+40\,\theta+42}{\theta^2+8\,\theta+10} \,, &
\tilde{\alpha}_6=\dfrac{\theta^2+24\,\theta+10}{\theta^2+8\,\theta+10}\,
  \tilde{\alpha}_5\,,
\end{array}
$$
$$
\begin{array}{l}
\alpha_1= (2\t^{11} + \t^{10} + 2\t^9 -\t^8 + 2)/2; \\
\alpha_2=  (\t^{11} + \t^{10} + \t^9 -3\t^8 + 4\t^5 + 4\t^4 + 4)/4; \\
\alpha_3=  (131\t^{11} -474\t^{10} + 448\t^9 + 52\t^8 + 309\t^7 -366\t^6 -216\t^5 +256\t^4 -364\t^3\\
\qquad\qquad\qquad\qquad  + 136\t^2 -496\t + 32)/512; \\
\alpha_4=  (-27\t^{11} -2\t^{10} -80\t^9 + 12\t^8 + 19\t^7 + 10\t^6 + 72\t^5 -64\t^4+76\t^3  \\
\qquad\qquad\qquad\qquad -104\t^2+ 48\t + 32)/128; \\
\alpha_5=  (-45\t^{11} - 10\t^{10} - 18\t^9 + 64\t^8 - 7\t^7 + 50\t^6 - 30\t^5 -60\t^4+  32\t + 64)/64;\\
\alpha_6=  (33\t^{11} + 8\t^{10} + 42\t^9 - 4\t^8 - 53\t^7 - 8\t^6 - 58\t^5+ 16\t^4+ 32\t^3 + 64\t^2 - 32\t)/64.
\end{array}
$$\medskip

\noindent{\bf Example 2.} Take $p=2$ and consider the irreducible polynomial
$$
f(x):=(x^3+x+5)^{50}+2^{89} (x^3+x+5)^{25}+2^{178}\,.
$$
The algorithm takes initially $\phi_1(x)=x^3+x+1$, and finds a unique side with slope -2 and residual polynomial $(y+1)^{50}$. A refinement leads to $\phi_1(x)=x^3+x+5$, and a Newton polygon with one side, with slope $-89/25$ and irreducible residual polynomial   $y^2+y+1$. Hence, in the number field $K$ defined by any root of $f(x)$, we have
 $2\Z_K=\p^{25}$, with $f(\p/2)=6$. The 2-index of the polynomial is 13011. While this computation is almost instantaneous, the
determination with Pari  of a 2-integral basis of $K$ takes about 190 seconds, and needs an amount of 244 Mb of memory.

\subsection{Some remarks on the complexity}

We have not developed a detailed analysis of the complexity of the algorithm, but the experimental results of the next section indicate that its running time is excellent. We now provide some arguments to explain this good behaviour.

We saw in section \ref{secthindex} that the number of iteration of the Main loop is bounded by $\ind(f)$ and that each iteration covers $\ind_\ty(f)$ steps from the total value of $\ind(f)$.
One is tempted to conclude that the running time of the algorithm is linear on the discriminant of $f(x)$, but
this is not so evident, since the  treatment of higher order types is much expensive than the treatment of low order types. However, this is balanced by two facts: on one hand, $\ind_\ty(f)$ is generally much bigger than one in each iteration; on the other hand, the higher the order, the smaller is $\om_{r+1}(f)$, and this invariant tells the number of coefficients of the $\phi_r$-adic development of $f(x)$ that must be computed, the length of the Newton polygon to be analyzed, and it is an upper bound for the degrees of the residual polynomials.

At least, it seems that for polynomials whose types have bounded order, the running time will be at most linear on the discriminant. In practice, the average running time is much smaller.

On the other hand, the degree of the polynomials $\phi_k^{\ty}(x)$ appearing in an $f$-complete type $\ty$  is a divisor of the product $e(\p_{\ty}/p)f(\p_{\ty}/p)$. Every time the type is enlarged, the degree of the last $\phi_k(x)$ is multiplied by the corresponding product $e_k^{\ty} f_k^{\ty}$. Hence, for a polynomial $f(x)$ to have attached a type of a very high order, its  degree must be really huge. This explains why the algorithm works well for polynomials of high degree: the maximum of the orders of the types of a polynomial grows slowly in comparison with the degree.

The low memory requirement of the algorithm is another of its strong advantages: it is only necessary to store the polynomials $\phi_k(x)$ and $\psi_k(x)$ of the types being calculated (and some Taylor expansions to gain efficiency). This makes possible the treatment of polynomials of very high degree with scarce computational resources.

The complexity of the computation of the generators is dominated by the inversion of $\phi_r(\t)$ in $K$, which is a hard task if the degree of $\phi_r(x)$ is large.

\subsection{Implementation of the algorithm}
\label{implementacio}

The first implementation of Montes' algorithm was programmed by J. Gu\`{a}rdia  in 1997, as a part of his Ph.D. It was written for Mathematica 3.0, and it included a specific package to work with finite fields, since that version of Mathematica did not carry such a package. It is still available on request to the author.
Ten years later, we started a collaboration to make a full upgrade of the algorithm, with many optimizations both theoretical and computational, including a completely new implementation in Magma.


The computation of generators for the prime ideals becomes a heavy time-consuming task if the $p$-adic factors of $f(x)$ have large degrees. For this reason, our implementation skips this calculation by defect. If the user wants to compute the generators, a Boolean variable {\tt GENERATORS} has to be given the value ``true".

As memory requirement is not a constraint for the implementation, the program stores some intermediate results (mainly $\phi$-adic expansions) to gain speed. The main data type used by the program is a specifically designed record which contain all the relevant data of a type in a given order. To avoid massive replication of the types being computed, must of the routines access them by memory address.

The program also includes a number of routines to construct types and polynomials with a prescribed set of types.
The program and its documentation, which includes all the examples presented in this paper, can be downloaded from the web page
\begin{center}
{\tt www-ma4.upc.edu/$\sim$guardia/MontesAlgorithm.html}.
\end{center}
Any comment on the program will be welcome.

\section{Some heuristics on the complexity}

We dedicate this section to illustrate the performance of (our implementation of) Montes' algorithm with several  polynomials chosen to force its capabilities at maximum in three directions: polynomials with a unique associate type of large order, polynomials which  require a lot of refinements, and polynomials with many different types. We have also included some test polynomials found in the literature.

All the tests have been done in a personal computer, with an Intel Core Duo processor, running at 2.2 Mhz, with 3Gb of RAM memory. The reader willing to check these results on his/her own can obtain the Magma code to generate these polynomials from the web page of the program.
\\
\noindent
{\bf Example 1:} Take $p=2$. Consider the irreducible polynomials
$$\,\hskip -8mm
\begin{array}{l}
\qquad\phi_1= x^2+2^2x + 2^4 ;
\\
\qquad\phi_2=\phi_1^2+2^4x\phi_1+2^{12};
\\
\qquad\phi_3=\phi_2^4+2^{23}(x+2^2)\phi_2^2+2^{42}x\phi_1;
\\
\qquad\phi_4=\phi_3^2+2^{12}x\phi_2^3\phi_3+2^{72}\phi_1\phi_2^2+2^{101}x;
\\
\qquad\phi_5=\phi_4^3+2^{34}\phi_1\phi_2\phi_3\phi_4^2+2^{215}((x(\phi_1+2^{6})(\phi_2^3+2^{25}\phi_2)+2^{27}\phi_2)\phi_3+2^{64}(x\phi_1\phi_2^2+2^{33}));
\\
\qquad\phi_6=\phi_5^6+2^{883}x\phi_3\phi_5^3+2^{1736}((x+4)\phi_1+2^{8})\phi_2^2\phi_4;
\\
\qquad\phi_7=\phi_6^2+
2^{2351}((\phi_1\phi_2^3+2^{23}x(\phi_1+2^{6})\phi_2)\phi_4
+2^{102}(x\phi_1\phi_2^3+2^{25}((x+2^{2})\phi_1+2^{6}x)\phi_2)
)\phi_5^4\\
\qquad\qquad
+2^{3234}(((x\phi_1\phi_2^3+2^{25}(x+2^{2})(\phi_1+2^6)\phi_2)\phi_3+2^{70}(x\phi_2^2+2^{27}))\phi_4\\
\qquad\qquad
+2^{168}((x+2^{2})\phi_1+2^{6}x)\phi_2^2)\phi_5;\\
\qquad\phi_8=\phi_7^{6}+
2^{7515}((((((x+4)\phi_1+2^{6}x)\phi_2^{2}+2^{31}x)\phi_3+2^{39}x\phi_1\phi_2^{3}+2^{70}x\phi_2)\phi_4^{2}\\
\quad\qquad
+2^{104}(((x\phi_1+2^{8})\phi_2^{2}+2^{25}(x\phi_1+2^{6}(x+2^{4})))\phi_3+
 2^{38}((x+4)(\phi_1+2^{6})\phi_2^{3}+2^{27}\phi_1\phi_2))\phi_4\\
\quad\qquad
+2^{208}(((x\phi_1+2^{8})\phi_2^{2}+2^{25}x\phi_1+2^{31}(x+4))\phi_3+2^{41}\phi_1\phi_2^{3}+2^{64}(x\phi_1+2^{8})\phi_2)
)\phi_5^{5}\\
\quad\qquad
+2^{924}((
(((x+4)\phi_1+2^{6}x)\phi_2^{3}+2^{25}((x+4)\phi_1+2^{8})\phi_2)\phi_3\\
\quad\qquad  +2^{134}((\phi_1+2^{4}(x+4))\phi_2^{2}+2^{25}(\phi_1+2^{4}x))
)\phi_4^{2}\\
\quad\qquad
+2^{104}(((x+4)\phi_1\phi_2^{3}+2^{25}x(\phi_1+2^{6})\phi_2)\phi_3+2^{64}((x+4)(\phi_1+2^{6})\phi_2^{2}+2^{27}\phi_1))\phi_4\\
\quad\qquad
+2^{210}(((\phi_1+2^{4}x)\phi_2^{3}+2^{23}(x+4)(\phi_1+2^{6})\phi_2)\phi_3+2^{64}(\phi_1\phi_2^{2}+2^{25}\phi_1))
)\phi_5^{2})\phi_6\phi_7^{3}\\
\quad\qquad
+2^{20618}(((x\phi_1\phi_2^{3}+2^{31}(x+2^{6})\phi_2)\phi_3+2^{66}(\phi_1\phi_2^{2}+2^{25}(\phi_1+2^{6})))\phi_4^{2}\\
\quad\qquad
+2^{104}(((x+4)\phi_1\phi_2^{3}+2^{25}x\phi_1\phi_2)\phi_3+2^{70}((x+4)\phi_2^{2}+2^{21}\phi_1))\phi_4\\
\quad\qquad
+2^{208}((((x+4)\phi_1+2^{6}x)\phi_2^{3}+2^{25}((x+4)\phi_1+2^{8})\phi_2)\phi_3+
2^{70}(x\phi_2^{2}+2^{19}(x\phi_1+2^{8}))))\phi_5^{5}\\
\end{array}
$$
$$\,\hskip -8mm
\begin{array}{l}
\quad\quad
+2^{21567}(((x(\phi_1+2^{6}(x+4))\phi_2^{2}+2^{25}(x+4)\phi_1)\phi_3+2^{39}((x\phi_1+2^{8})\phi_2^{3}+
2^{25}(x+4)\phi_1\phi_2))\phi_4^{2}\\
\quad\quad
+2^{104}((((x+4)\phi_1+2^{6}x)\phi_2^{2}+2^{33})\phi_3+2^{64}x\phi_1\phi_2)\phi_4\\
\quad\quad
+2^{208}(x+4)(\phi_1+2^{6})\phi_2^{2}\phi_3+2^{249}(\phi_1+2^{4}x)\phi_2^{3})\phi_5^{2}.
\end{array}
$$

For each $j$, the corresponding polynomial $\phi_j$ has a unique associate complete type of order $j$, so that in the corresponding number field $K_j$ the ideal $2\Z_{K_j}$ is the power of a unique prime ideal $\p_j$.
The following table contains the degree and 2-index of the $\phi_k$, the ramification index $e_j$ and residual degree $f_j$ of $\p_j$ and the time ${\tt t}_1$ used by the program to compute them. The last column indicates the  time ${\tt t}_2$ to run the extended version of the algorithm, which includes the computation of generators for the ideals $\p_j$. All the times are expressed in seconds.
The computation of the generators in the last two rows was stopped after 24 hours of running time.

$$
\begin{array}{|c|r|r|r|r|c|c|}
\hline
\phi_j & \deg \phi_j & \operatorname{ind}(\phi_j) &e_j & f_j & {\tt t}_1 & {\tt t}_2 \\
\hline\hline
\phi_1 &    2 &       2 &  1 &   2 & 0.00 & 0.00\\
\hline
\phi_2 &    4 &      16 &  1 &   4 & 0.01 & 0.01\\
\hline
\phi_3 &   16 &     360 &  2 &   8 & 0.01 & 0.01\\
\hline
\phi_4 &   32 &    1544 &  2 &  16 & 0.01 & 0.016\\
\hline
\phi_5 & 96 &     14616 &  2 &  48 & 0.08& 0.6\\
\hline
\phi_6 & 576 &   537120 &  6 &  96 & 0.70 & 406 \\
\hline
\phi_7 & 1152 & 2153376 & 12 &  96 & 4.0&\\
\hline
\phi_8 & 6912 & 77673504 &36 & 192 & 787 &\\
\hline
\end{array}
$$


\vskip 3mm

\noindent
{\bf Example 2}:
Let $f^k(x)=(x^2+x+1)^2-p^{2k+1}$, with $p\equiv 1\pmod7$ a prime number.   When we apply Montes'algorithm to factor the ideal   $p\,\ZK$, we obtain two types of order zero with liftings $\phi_1(x)\in\Z[x]$ of degree one. For both of them the Newton polygon has only one side, with slope  $-1$ and end points $(2,0)$ and $(0,2)$, and the residual polynomial is the square of a linear factor. After approximately $2k$ total refinements, both types become
$f^k$-complete. The ideal $p\,\ZK$ splits as the product of two prime ideals with ramification index 2 and residual degree 1, and the $p$-index of $f^k(x)$ is $2k$.

This is almost the  illest-conditioned quartic polynomial for the algorithm, since the  index of every type is increased a unit per refinement in general, and the total $p$-index of $f^k(x)$ is ${2k}$. Thus, the program has to make about $2k$ iterations of the main loop.  Numerical experimentation shows that even in this worst case the running time of the algorithm is very low. In the following table we show the running time  of the programm for different values of $k$ and $p$. As before, ${\tt t}_1$ is the time in seconds to compute the index, residual degrees and ramification indices, and ${\tt t}_2$ is the  time to compute also the generators for the prime ideals.
 $$
\begin{array}{|r|r|r|r||r|r|r|r|}
\hline
p &  \operatorname{ind}(f^k) & {\tt t}_1 & {\tt t}_2 &p & \operatorname{ind}(f^k) & {\tt t}_1 & {\tt t}_2  \\
\hline\hline
  7 &  1000 & 0.57 & 0.62 &        43 & 10000 & 229 & 237  \\
\hline
  7 &  2000 & 1.95 &  2.1 &       103 & 10000 & 324 & 334 \\
\hline
  7 &  4000 &  8.7 &  9.2 &      1009 & 1000 & 2.1 & 2.6 \\
\hline
  7 &  8000 & 44.7 & 46.2 &      1009 & 2000 & 10.8 & 12.3\\
\hline
  7 & 16000 &  245 & 250 &      1009 & 4000 & 58 & 62\\
\hline
  7 & 20000 &  436    & 444 &    10^9+9 & 1000 & 10 & 12.7  \\
\hline
 13 &  1000 & 0.75 &0.85 &    10^9+9 & 2000 & 57 & 66.5\\
\hline
 13 &  2000 &  2.9 & 3.1 &    10^9+9 & 4000 & 313 & 341\\
\hline
 13 & 10000 & 131  & 135 & 10^{69}+9 &  100 & 2.8 &4.8\\
\hline
 19 & 10000 & 158  & 162 & 10^{69}+9 &  200 & 8 & 13.5 \\
\hline
 31 & 10000 & 198 &  205 & 10^{69}+9 &  400 & 29.4 &48 \\
\hline
 37 & 10000 & 214 &  221 & 10^{69}+9 & 1000 & 221 &308 \\
\hline
\end{array}
$$

\vskip 5mm

\noindent
{\bf Example 3:} Take $p=13$. We now consider a polynomial with several different types. Let
$$
\begin{array}{l}
\phi_1(x)=x^2+13^2x+13^4\cdot 3;\\
\phi_2(x)=\phi_1(x)^3+((13^{18}\cdot2));\\
\phi_3(x)=\phi_2(x)^{10}+13^{89}(x+13^{2})\phi_2(x)^5+13^{176}\phi_1(x);\\
\phi_4(x)=\phi_3(x)^2+13^{248}(12(x+13^{2})\phi_1(x)+13^{8})\phi_2(x)^6+13^{335}\cdot12\phi_1(x)^2\phi_2(x);\\
\displaystyle
f_j(x)=\prod_{k=0}^{j}\phi_4(x+k)+13^{5000},\qquad j=0,\dots,12;
\end{array}
$$
In the number field $K_j$ defined by $f_j(x)$, we have the factorization
$$
13 \Z_L=\p^5_1\cdots\p^5_j, \qquad f(\p_j/13)=24.
$$
Each prime ideal comes from a different order 4 type. The 13-index of $f_j(x)$ is $21576j$. The times to compute this index and the factorization of $13$ in the fields $K_j$ are shown in the table below.

$$
\begin{array}{|r|r|r|c|}
\hline
j & \deg f_j & \operatorname{ind}(f_j)  & {\tt t}_1  \\
\hline\hline
1 &    120 &       21576  & 0.08 \\
\hline
2 &    240 &      43152 &   0.3 \\
\hline
3 &   360 &      64728 & 0.9\\
\hline
4 &   480 &    86304 & 2.3 \\
\hline
5 & 600 &     107880 & 4.4\\
\hline
6 & 720 &   129456  & 8.2\\
\hline
7 & 840 & 151032 & 13.3\\
\hline
8 & 960 & 172608 & 20.3 \\
\hline
9 & 1080 & 194184 &  29.4\\
\hline
10 & 1200 & 215760  & 40.6\\
\hline
11 & 1320 & 237336 &   55.2\\
\hline
12 & 1440 & 258912 & 72 \\
\hline
13 & 1560 & 280488 &  92\\
\hline
\end{array}
$$

\vskip 2mm
\noindent
The computation of the generators for the number field defined by polynomial $f_1(x)$ took 20 seconds, for the number field defined by $f_2(x)$ lasted 6{\tiny 1/2} hours. The computation for $f_3$ exhausted Magma's virtual memory due to a coefficient explosion in the computation of an extended gcd.

In this example one cannot expect a linear behaviour of the time versus the number of types, because the addition of more factors to the product defining the $f_j(x)$ implies  a significant growing in the size of the coefficients of the polynomial, which has a certain impact in the running time of the algorithm.

\vskip 3mm
\noindent
{\bf Example 4:} We applied the algorithm to the list of 32 polynomials $f_1,\dots, f_{32}$ appearing in \cite[appendix D]{FPR}. The total running time for altogether was less than 0.2 seconds. We then applied the algorithm to the polynomials $F_i=f_i^2+p_i^{1000}$, where $p_i$ is the prime specified in {\em loc.cit.} for every polynomial.  In the table below we display the index of these polynomials and the running times of the algorithm.
As before, ${\tt t}_1$ denotes the time in seconds to determine the index, the residual degrees and the ramification indices, and ${\tt t}_2$ is the time of the extended algorithm that includes the computation of the generators of the ideals.

$$
\begin{array}{|r|r|r|r|r||r|r|r|r|r|}
\hline
f& p  &\operatorname{ind}(f)  & {\tt t}_1   & {\tt t}_2 &
f& p  &\operatorname{ind}(f)  & {\tt t}_1   & {\tt t}_2  \\ \hline\hline
F_1    & 2     & 2502150 & 2.45  & 3.1      & F_{17 } & 2  & 1571054  & 2.9     & 7     \\ \hline
F_2    & 2     & 1481141 & 2     & 2.7      & F_{18 } & 7  & 7331055  & 77      & 93.9    \\ \hline
F_3    & 3     & 2570992 & 6.6   & 8.2      & F_{19 } & 71 & 187219   & 116.1   & 249.3   \\ \hline
F_4    & 3     & 1177569 & 4.6   & 7.5      & F_{20 } & 3  & 287752   & 15.4    & 23.2    \\ \hline
F_5    & 2     & 2502505 & 1     & 1.5      & F_{21 } & 5  & 10117231 & 73.8    & 81.2    \\ \hline
F_6    & 2     & 2137558 & 1.6   & 2.4      & F_{22 } & 3  & 5194476  & 18.4    & 23.6  \\ \hline
F_7    & 2     & 2751159 & 3     & 4.7      & F_{23 } & 3  & 2888852  & 15.7    & 23.6  \\ \hline
F_8    & 5     & 1646099 & 13.81 & 20.5     & F_{24 } & 2  & 2901708  & 6.2     & 9.5     \\ \hline
F_9    & 2     & 1672713 & 2     & 3        & F_{25 } & 47 & 2612660  & 253.5   & 636   \\ \hline
F_{10} & 1289  & 1500768 & 117   & 234      & F_{26 } & 61 & 4257732  & 158     & 192\\ \hline
F_{11} &  2    & 2629928 & 3     & 4.1      & F_{27 } & 2  & 5925350  & 7.7     & 9.4    \\ \hline
F_{12} & 3     & 5895414 & 20    & 22.6     & F_{28 } & 3  & 5720164  & 5       & 7.1    \\ \hline
F_{13} & 11    & 1810788 & 35    & 64.6     & F_{29 } & 3  & 7826660  & 15.8    & 23   \\ \hline
F_{14} & 17    & 1618581 & 31.7  & 58       & F_{30 } & 2  & 39363539 & 14.7    & 26.3   \\ \hline
F_{15} & 2     & 7744913 & 4.9   & 6.1      & F_{31 } & 2  & 40933692 & 62.4    & 73   \\ \hline
F_{16} & 2     & 3808558 & 4.3   & 6.9      & F_{32 } & 2  & 17097775 & 82.7    & 132 \\\hline
\end{array}
$$


\end{document}